\newcommand{\dsp}{\displaystyle}
\newcommand{\bd}{\begin{displaymath}}
\newcommand{\be}{\begin{equation}}
\newcommand{\beq}{\begin{eqnarray}}
\newcommand{\ba}{\begin{array}}
\newcommand{\ed}{\end{displaymath}}
\newcommand{\ee}{\end{equation}}
\newcommand{\eeq}{\end{eqnarray}}
\newcommand{\ea}{\end{array}}
\newcommand{\espace}{\mbox{ }}
\newcommand{\eps}{\varepsilon}
\newcommand{\abs}[1]{\left|#1\right|}
\newcommand{\Prob}{{\rm I\hspace{-0.8mm}P}}
\newcommand{\Exp}{{\rm I\hspace{-0.8mm}E}}
\newcommand{\N}{{\mathbb N}}
\newcommand{\Z}{{\mathbb Z}}
\newcommand{\R}{{\mathbb R}}
\newcommand{\dt}{\partial_t}
\newcommand{\dx}{\partial_x}
\newcommand{\eqref}[1]{(\ref{#1})}
\newtheorem{theorem}{Theorem}[section]
\newtheorem{proposition}{Proposition}[section]
\newtheorem{lemma}{Lemma}[section]
\newtheorem{corollary}{Corollary}[section]
\newtheorem{remark}{Remark}[section]
\newenvironment{proof}[2]{\espace\\{\em Proof of #1 \ref{#2}.}}{\hfill\mbox{$\square$}}
\begin{document}
\title{Euler hydrodynamics  for attractive particle
systems in random environment}
\author{C. Bahadoran$^{a,e}$, H. Guiol$^{b,e}$, K. Ravishankar$^{c,e,f}$, E. Saada$^{d,e}$}
\date{}
\maketitle
$$ \ba{l}
^a\,\mbox{\small Laboratoire de Math\'ematiques, Universit\'e Clermont 2, 63177 Aubi\`ere, France} \\
\quad \mbox{\small e-mail:
bahadora@math.univ-bpclermont.fr}\\
^b\, \mbox{\small UJF-Grenoble 1 / CNRS / Grenoble INP / TIMC-IMAG UMR 5525, Grenoble, F-38041, France
} \\
\quad \mbox{\small e-mail:
herve.guiol@imag.fr}\\
^c\, \mbox{\small Dep. of Mathematics, SUNY,
College at New Paltz, NY, 12561, USA} \\
\quad \mbox{\small e-mail:
 ravishak@newpaltz.edu}\\
^d\, \mbox{\small CNRS, UMR 8145, MAP5,
Universit\'e Paris Descartes,
Sorbonne Paris Cit\'e, France}\\
\quad \mbox{\small e-mail:
Ellen.Saada@mi.parisdescartes.fr}\\ \\
^e\, \mbox{\small  Supported by  grants ANR-07-BLAN-0230, ANR-2010-BLAN-0108, PICS 5470 }\\
^f\, \mbox{\small Supported by NSF grant DMS 0104278}\\
\ea
$$
\begin{abstract}
\noindent  We prove  quenched hydrodynamic limit under hyperbolic
time scaling for bounded attractive particle systems on $\Z$ in
random ergodic environment. Our result is a strong law of large
numbers,  that we  illustrate with  various examples. 
\end{abstract}
\textbf{Keywords:}  Hydrodynamic limit, attractive particle system,
scalar conservation law, entropy solution, random environment,
quenched disorder,
generalized misanthropes and $k$-step models.  \\ \\
\textbf{AMS 2000 Subject Classification: }Primary
60K35; Secondary 82C22.\\ \\
%
%
%
\section{Introduction}
Hydrodynamic limit  
 describes the time evolution (usually governed by a limiting PDE,
 called the hydrodynamic equation) of empirical density
fields in interacting particle systems (\emph{IPS}). For usual
models, such as the simple exclusion process, the limiting PDE is a
nonlinear diffusion equation or  hyperbolic conservation law 
(see \cite{kl} and references therein). In this context, a random environment leads to
homogeneization-like effects, where an effective diffusion matrix or
flux function is expected to capture the effect of inhomogeneity. 
Hydrodynamic limit in random environment has been widely addressed and robust methods  have been  developed in the diffusive case 
(\cite{fa,fam,fri,gj,jar,kou,nag,qua}).\\ \\  
 In the hyperbolic setting, 
due to non-existence of strong solutions and non-uniqueness of weak solutions, the key issue 
is to establish convergence to the so-called entropy solution (see e.g. \cite{serre}) of the Cauchy problem. 
The first such result without restrictive assumptions is due to \cite{fraydoun} for spatially homogeneous attractive systems
with product invariant measures.
In random environment,  the few available results 
depend on particular features of the investigated models. 
In \cite{bfl}, the authors consider the
asymmetric zero-range process with site disorder on $\Z^d$,   extending a model  introduced in \cite{ev}.
They prove a quenched hydrodynamic limit given by a hyperbolic
conservation law with an effective homogeneized flux function. To
this end, they use in particular the existence of explicit product
invariant measures for the disordered zero-range process  below
some critical value of the disorder parameter.   In \cite{ks}, extension to the supercritical case
is carried out in the totally asymmetric case with constant jump rate. 
In \cite{timo},  under a strong
mixing assumption,  the author establishes a  quenched hydrodynamic
limit for  the totally asymmetric nearest-neighbor $K$-exclusion
process on $\Z$ with  site disorder,  for which explicit
invariant measures are not known. The  last  two results rely on
  a microscopic version of the Lax-Hopf formula.  However,
the simple exclusion process beyond the totally asymmetric nearest-neighbor case, or more complex
models with state-dependent jump rates, remain outside the scope of the above approaches.  \\ \\
In this paper, we prove quenched hydrodynamics for attractive
particle systems in random environment on $\Z$ with a bounded number
of particles per site. Our method is quite robust with respect to
the model and disorder.  
We  only require the
environment to be ergodic. Besides, we are not restricted to site or
bond disorder.  However, for simplicity we treat in detail the
misanthropes' process with site disorder, and explain in the last
section how our method applies to  various other  models.
An essential difficulty for the disordered system is the
simultaneous loss of translation invariance {\em and}  lack of
knowledge  
of explicit invariant measures. Note that
even if the system without disorder has explicit invariant measures,
the disordered system in general does not, with the above exception
of the zero-range process.  In particular, one does not have an
effective characterization theorem for invariant measures of the
quenched process.
Our strategy is to prove hydrodynamic limit for a joint
disorder-particle process which, unlike the quenched process, is translation invariant.
The idea is that hydrodynamic limit for the joint process should imply quenched hydrodynamic limit.
This  is false for limits  in the usual (weak) sense,
but becomes true if a \textit{strong}  hydrodynamic limit is proved for the joint process. We are able to do it 
 by  characterizing the extremal invariant and translation invariant measures of the joint process, 
and by  adapting the tools developed in \cite{bgrs3}.\\ \\
The paper is organized as follows.  
In Section \ref{sec_results}, we define the model and state our main
result. Section \ref{sec_disorder_particle} is devoted to the study
of the joint disorder-particle process  and characterization of its
invariant measures. The hydrodynamic limit is proved in Section
\ref{proof_hydro}. Finally, in Section \ref{sec:general} we consider
 models other than  the misanthropes' process:   We detail
generalizations of misanthropes and $k$-step exclusion processes, as
well as a traffic model.
\section{Notation and results}\label{sec_results}
Throughout this paper $\N=\{1,2,...\}$ will denote the set of
natural numbers, and $\Z^+=\{0,1,2,...\}$ the set of non-negative
integers.
The integer part  $\lfloor x\rfloor\in\Z$ of $x\in\R$    is
uniquely defined by $\lfloor x\rfloor \leq x<\lfloor x\rfloor +1$.
We consider particle  configurations on $\Z$ with at most $K$
particles per site, $K\in\N$. Thus the state space of the process is
${\mathbf X}=\{0,1,\cdots,K\}^{\Z}$, which we endow with the product
topology, that makes ${\mathbf X}$ a compact metrisable space, 
with the product (partial) order.\\ \\
 The set  $\mathbf A$ of environments 
 is a compact metric space endowed with its Borel
$\sigma$-field.
 A function $f$ defined on ${\mathbf A}\times{\mathbf X}$
 (resp. $g$   on ${\mathbf A}\times{\mathbf X}^2$)
 is called {\em local} if there is a finite subset
$\Lambda$ of $\Z$ such that $f(\alpha,\eta)$  depends only on
$\alpha$ and $(\eta(x),x\in\Lambda)$
 (resp. $g(\alpha,\eta,\xi)$ depends only on
$\alpha$ and $(\eta(x),\xi(x),x\in\Lambda)$).
We denote by $\tau_x$ either the spatial translation operator on the
real line for $x\in\R$, defined by $\tau_x y=x+y$, or its
restriction to  $x\in\Z$. By extension, if $f$ is a function defined
on $\Z$ (resp. $\R$), we set $\tau_x f=f\circ\tau_x$ for $x\in\Z$
(resp. $\R$). In the sequel this will be applied to particle
configurations $\eta\in\mathbf X$, disorder configurations
$\alpha\in\mathbf{A}$, or joint disorder-particle configurations
$(\alpha,\eta)\in\mathbf{A}\times\mathbf{X}$. In the latter case,
unless mentioned explicitely, $\tau_x$  
 applies simultaneously
to both components.\\ \\
If $\tau_x$ acts on some set and $\mu$ is a measure on this set,
$\tau_x\mu=\mu\circ\tau_{x}^{-1}$.
We let ${\mathcal M}^+(\R)$ denote the set of  nonnegative
measures on $\R$ equipped with the metrizable topology of vague
convergence, defined by convergence on continuous test functions
with compact support. The set of probability measures on
$\mathbf{X}$ is denoted by ${\mathcal P}(\mathbf{X})$. If $\eta$ is
an ${\mathbf X}$-valued random variable and $\nu\in{\mathcal
P}(\mathbf{X})$, we write $\eta\sim\nu$ to specify that $\eta$ has
distribution $\nu$.  Similarly, for
$\alpha\in\mathbf{A}, Q\in{\mathcal P}(\mathbf{A})$, $\alpha\sim Q$
means that $\alpha$ has distribution $Q$. \\ \\
 A sequence  $(\nu_n,n\in\N)$ of probability measures on
${\mathbf X}$ converges weakly to some $\nu\in{\mathcal
P}(\mathbf{X})$, if and only if  $\lim_{n\to\infty}\int
f\,d\nu_n=\int f\,d\nu$  for every continuous function $f$ on
${\mathbf X}$. The topology of weak convergence is metrizable and
makes ${\mathcal P}({\mathbf X})$ compact.
A partial stochastic order is defined on ${\mathcal P}(\mathbf{X})$;
namely, for $\mu_1,\mu_2\in{\mathcal P}(\mathbf{X})$,
we write $\mu_1\leq\mu_2$ if the following equivalent conditions
hold (see {\em e.g.}  \cite{lig1, strassen}):  \textit{(i)} For
every non-decreasing nonnegative function $f$ on $\mathbf X$, $\int
f\,d\mu_1\leq\int f\,d\mu_2$. \textit{(ii)} There exists a coupling
measure  $\overline{\mu}$ on $
\mathbf {X}\times\mathbf {X}$ with marginals $\mu_1$ and $\mu_2$,
such that $\overline{\mu}\{(\eta,\xi):\,\eta\leq\xi\}=1$.
\\ \\
In the following model, we fix a constant $c>0$ and define ${\mathbf
A}=[c,1/c]^\Z$  to be  the set of environments (or disorders)
$\alpha=(\alpha(x):\,x\in\Z)$ such that
\be\label{alpha_bounds}\forall x\in\Z,\quad c\leq\alpha(x)\leq
c^{-1}\ee
For each realization $\alpha\in{\mathbf A}$ of the disorder, the
{\em quenched process} $(\eta_t)_{t\geq 0}$ is a Feller process on
$\mathbf X$ with generator  given by, for any local function $f$ on
${\mathbf X}$,
\be \label{generator} L_\alpha f(\eta)=\sum_{x,y\in{\Z}}\alpha(x)
p(y-x)b(\eta(x),\eta(y)) \left[ f\left(\eta^{x,y} \right)-f(\eta)
\right] \ee
 where $\eta^{x,y}$ denotes
the new state after a particle has jumped from $x$ to $y$ (that is
$\eta^{x,y}(x)=\eta(x)-1,\,\eta^{x,y}(y)=\eta(y)+1,\,
\eta^{x,y}(z)=\eta(z)$ otherwise), the particles' jump
kernel $p$ is  a probability distribution on $\Z$,  and $b\ :\
\Z^+\times\Z^+\to \R^+$ is the jump rate.
We assume that $p$ and $b$ satisfy:  \\ \\
{\em (A1)} Irreducibility: For every $z\in\Z$, 
$\sum_{n\in\N}[p^{*n}(z)+p^{*n}(-z)]>0$, 
where $*n$ denotes $n$-th convolution power;\\
{\em (A2)} finite mean: $\sum_{z\in\Z}\abs{z}p(z)<+\infty$;\\
{\em (A3)} $K$-exclusion rule: $b(0,.)=0,\,b(.,K)=0$;\\
{\em (A4)} non-degeneracy:  $b(1,K-1)>0$; \\
{\em (A5)} attractiveness: $b$ is nondecreasing (nonincreasing) in its first
(second) argument. \\ \\
For the graphical construction of the system given by
\eqref{generator}
(see \cite{bgrs3} and references therein),  let us introduce a
general framework that applies to a larger class of models (see
Section \ref{sec:general}  below).
 Given a  measurable space $(\mathcal V,{\mathcal F}_\mathcal V,m)$,
  for $m$  a finite nonnegative measure,
we consider the probability space $(\Omega,{\mathcal F},\Prob)$ of
locally finite point measures $\omega(dt,dx,dv)$ on
$\R^+\times\Z\times\mathcal V$, where $\mathcal F$ is generated by the
mappings $\omega\mapsto\omega(S)$ for Borel sets $S$  of $\R^+\times\Z\times\mathcal V$,  and $\Prob$
makes $\omega$ a Poisson process with intensity
\[
M(dt,dx,dv)=
\lambda_{\R^+}(dt)\lambda_\Z(dx)m(dv) \]
  denoting by $\lambda$  either   the Lebesgue or
 the counting measure. We write $\Exp$  for  expectation 
 with respect to $\Prob$. For the particular model \eqref{generator}
we take
\be\label{special_choice}
\mathcal V:=\Z\times[0,1],\quad
v=(z,u)\in\mathcal V,\quad
m(dv)=c^{-1}||b||_\infty p(dz)\lambda_{[0,1]}(du)
\ee
Thanks to assumption {\em (A2)}, for $\Prob$-a.e. $\omega$, there
exists a unique mapping
\be \label{unique_mapping} (\alpha,\eta_0,t)\in{{\mathbf
A}}\times{\mathbf
X}\times\R^+\mapsto\eta_t=\eta_t(\alpha,\eta_0,\omega)\in{\mathbf X}
\ee
satisfying: \textit{(a)} $t\mapsto\eta_t(\alpha,\eta_0,\omega)$ is
right-continuous; \textit{(b)} $\eta_0(\alpha,\eta_0,\omega)=\eta_0$; \textit{(c)} the
particle configuration  is updated at points  $(t,x,v)\in\omega$ (and only at such  points;
by $(t,x,v)\in\omega$ we mean  $\omega\{(t,x,v)\}=1$)  according to the rule
\be\label{update_rule}
\eta_t(\alpha,\eta_0,\omega)={\mathcal T}^{\alpha,x,v}\eta_{t^-}(\alpha,\eta_0,\omega)
\ee
where, for $v=(z,u)\in\mathcal V$,  ${\mathcal T}^{\alpha,x,v}$ is defined by 
\be\label{update_misanthrope}
{\mathcal T}^{\alpha,x,v}\eta=\left\{
\ba{lll}
\eta^{x,x+z} & \mbox{if} & u<\displaystyle\alpha(x){\frac{b(\eta(x),\eta(x+z))}
{c^{-1}||b||_\infty}}\\
\eta &  & \mbox{otherwise}
\ea
\right.
\ee
Notice the shift commutation property 
\be\label{shift_t}
{\mathcal T}^{\tau_x\alpha,y,v}\tau_x=\tau_x{\mathcal T}^{\alpha,y+x,v}
\ee
where $\tau_x$  on the right-hand side acts only on $\eta$.
By assumption {\em (A5)},
\be\label{attractive_0}{\mathcal T}^{\alpha,x,v}:{\mathbf X}\to{\mathbf X}\mbox{ is nondecreasing}\ee
Hence,
\be \label{attractive_1}
(\alpha,\eta_0,t)\mapsto\eta_t(\alpha,\eta_0,\omega) \mbox{ is
nondecreasing w.r.t. } \eta_0\ee
For every $\alpha\in{\mathbf A}$, under $\Prob$,
$(\eta_t(\alpha,\eta_0,\omega))_{t\geq 0}$ is a Feller process with
generator
\be\label{gengen}
L_\alpha f(\eta)=\sum_{x\in\Z}\int_{\mathcal V}\left[
f\left({\mathcal T}^{\alpha,x,v}\eta\right)-f(\eta)
\right]m(dv) \ee
With \eqref{update_misanthrope}, \eqref{gengen} reduces to \eqref{generator}.
Thus for any $t\in\R^+$ and  continuous function $f$ on
${\mathbf X}$,
$\Exp[f(\eta_t(\alpha,\eta_0,\omega))]=S_{\alpha}(t)f(\eta_0)$, where $S_\alpha$ denotes the semigroup generated by $L_\alpha$.
{}From \eqref{attractive_1},  for $\mu_1,\mu_2\in{\mathcal
P}(\mathbf{X})$,
\be \label{attractive_2} \mu_1\leq\mu_2\Rightarrow\forall
t\in\R^+,\,\mu_1 S_\alpha(t)\leq\mu_2 S_\alpha(t) \ee
Property \eqref{attractive_2} is
usually called {\em attractiveness}. Condition \eqref{attractive_0} implies the stronger {\em complete monotonicity} property (\cite{fm, dplm}),
that is, existence of a monotone Markov coupling for an arbitrary number of processes with generator \eqref{generator}, see  \eqref{coupling_t} below;
we also say that the process is \textit{strongly attractive}. \\ \\
Let $N\in\N$ be the scaling parameter for the hydrodynamic limit,
that is, the inverse of the macroscopic distance between two
consecutive sites. The empirical measure of a configuration $\eta$
viewed on scale $N$ is given by 
\[
\pi^N(\eta)(dx)=N^{-1}\sum_{y\in\Z}\eta(y)\delta_{y/N}(dx)
\in{\mathcal M}^+(\R) \]
 where, for $x\in\R$, $\delta_x$ denotes the Dirac measure at $x$. \\
Our main result is
\begin{theorem}\label{th:hydro}
Assume $p(.)$ has finite third moment.
Let $Q$ be an ergodic probability distribution on $\mathbf A$. Then
there exists a Lipschitz-continuous function  $G^Q$ on
$[0,K]$ defined below (depending only on $p(.)$, $b(.,.)$ and $Q$)
such that the following holds.
Let $(\eta^N_0,\,N\in\N)$ be a sequence of ${\mathbf X}$-valued random
variables on a probability space $(\Omega_0,\mathcal F_0,\Prob_0)$
such that
\be\label{initial_profile_vague}
\lim_{N\to\infty}\pi^N(\eta^N_0)(dx)=
u_0(.)dx\quad\Prob_0\mbox{-a.s.}\ee
for some measurable $[0,K]$-valued profile $u_0(.)$.
Then for  $Q$-a.e.  $\alpha\in{\mathbf A}$, the
$\Prob_0\otimes\Prob$-a.s. convergence
\[
\lim_{N\to\infty}\pi^N(\eta_{Nt}(\alpha,\eta^N_0(\omega_0),\omega))(dx)=u(.,t)dx
\]
 holds uniformly on all bounded time intervals, where $(x,t)\mapsto
u(x,t)$ denotes the unique entropy solution with initial condition
$u_0$ to the conservation law
\be \label{hydrodynamics} \dt u+\dx[G^Q(u)]=0 \ee
\end{theorem}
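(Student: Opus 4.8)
The plan is to reduce the quenched hydrodynamic limit (for a.e.\ fixed disorder) to a *strong* hydrodynamic limit for the joint disorder-particle process $(\tau_{-[Nx]}\alpha, \eta_{Nt})$, which is translation invariant on $\mathbf{A}\times\mathbf{X}$, and then invoke the machinery of \cite{bgrs3}. I would proceed in four main stages.

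\emph{Stage 1: The joint process and its invariant measures.} First I would define the joint Markov process $(\alpha, \eta_t)$ on $\mathbf{A}\times\mathbf{X}$, where the disorder component $\alpha$ is frozen (evolves trivially) but is now part of the state; this is the content of the announced Section~\ref{sec_disorder_particle}. Because the rates $\alpha(x)p(y-x)b(\eta(x),\eta(y))$ depend on $\alpha$ only through local translation-covariant quantities, the shift commutation \eqref{shift_t} shows this joint process \emph{is} translation invariant, even though the quenched process for fixed $\alpha$ is not. The crucial step here is to characterize the set $(\mathcal{I}\cap\mathcal{S})_e$ of extremal invariant and translation-invariant probability measures of the joint process. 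I expect these to be of the form $Q(d\alpha)\,\mu_{\alpha,\rho}(d\eta)$ for each admissible density $\rho$, where $\mu_{\alpha,\rho}$ is a quenched "conditionally product-like" (but \emph{not} explicitly known) invariant measure indexed by the macroscopic density $\rho$, and the mean density $\int\int \eta(0)\,\mu_{\alpha,\rho}(d\eta)Q(d\alpha)$ is a monotone function of $\rho$. The proof of this characterization uses attractiveness/complete monotonicity \eqref{attractive_0}, a coupling argument (comparison of two stationary measures of the joint process), and ergodicity of $Q$ to eliminate spatial fluctuations of the disorder; this is where I expect the main technical work — establishing that extremal translation-invariant stationary states are indexed by a single scalar parameter, in the absence of an explicit formula for the invariant measures, by adapting the "$H$-theorem"/coupling techniques rather than an explicit-measure computation.

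\emph{Stage 2: The flux function $G^Q$.} Having the one-parameter family $\{\nu_\rho := Q(d\alpha)\mu_{\alpha,\rho}(d\eta)\}$, I would define $G^Q(\rho)$ as the expected microscopic current through the origin in the stationary state $\nu_\rho$, namely $G^Q(\rho) = \int\!\!\int \sum_{z} z\,\alpha(0)p(z)\,b(\eta(0),\eta(z))\,\mu_{\alpha,\rho}(d\eta)\,Q(d\alpha)$ (suitably symmetrized), after reparametrizing by the actual mean density $R(\rho)$. Lipschitz continuity of $G^Q$ on $[0,K]$ follows from the boundedness of $b$, finiteness of the first moment of $p$ (assumption \emph{(A2)}), and monotone/continuous dependence of $\mu_{\alpha,\rho}$ on $\rho$, the latter coming from the coupling construction and attractiveness; the finite third moment hypothesis on $p$ is needed later for the variance/large-deviation-type estimates controlling the empirical current, not for defining $G^Q$ itself.

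\emph{Stage 3: Strong hydrodynamics for the joint process.} Next I would run the constructive/variational scheme of \cite{bgrs3} for the translation-invariant joint process: establish a microscopic entropy inequality (via the graphical construction \eqref{update_rule}–\eqref{update_misanthrope} and the monotone coupling \eqref{coupling_t}), prove a "macroscopic stability"/finite-propagation estimate, and deduce that the empirical measure converges, $\Prob_0\otimes\Prob$-a.s.\ and uniformly on bounded time intervals, to the unique entropy solution $u(\cdot,t)$ of $\dt u + \dx[G^Q(u)]=0$ with data $u_0$. The key inputs are: attractiveness \eqref{attractive_2}, the characterization from Stage~1 (to identify local equilibria), and a Glimm-scheme / Riemann-problem building block whose microscopic counterpart is controlled using \cite{bgrs3}. "Strong" here means the convergence holds for the actual random empirical measure, not merely in expectation or in probability — this is exactly what makes the final quenching step work.

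\emph{Stage 4: Quenching.} Finally, strong (a.s.) convergence for the joint process, together with Fubini and the fact that $\alpha$ is an unevolving coordinate distributed according to $Q$, yields: for $Q$-a.e.\ $\alpha$, the $\Prob_0\otimes\Prob$-a.s.\ convergence of $\pi^N(\eta_{Nt}(\alpha,\eta_0^N,\omega))$ to $u(\cdot,t)dx$. (This last implication is precisely the one flagged in the introduction as true for \emph{strong} but false for \emph{weak} hydrodynamic limits.) I would take care that the exceptional $Q$-null set of $\alpha$'s does not depend on the initial profile $u_0$ nor on the time interval, which is automatic since the joint-process statement is uniform.

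\emph{Main obstacle.} The heart of the proof is Stage~1: characterizing the extremal invariant and translation-invariant measures of the joint process as a one-parameter family, \emph{without} explicit invariant measures. I would attack it by a coupling/entropy argument — couple two such stationary measures by the complete-monotonicity coupling \eqref{coupling_t}, show the disagreement process is itself stationary, use ergodicity of $Q$ in the disorder direction and attractiveness in the particle direction to force the coupling to be ordered, and conclude that any two extremal states with the same mean density coincide. Making this rigorous in random environment — in particular, controlling the interaction between spatial averaging of the disorder and the particle dynamics — is the principal difficulty, and is where the adaptation of \cite{bgrs3} (which handles the homogeneous case) must be done with care.
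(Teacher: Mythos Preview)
Your plan is correct and follows essentially the same route as the paper: the joint disorder-particle process (Section~\ref{sec_disorder_particle}), characterization of its extremal shift-invariant stationary measures via coupling and the ordering argument (Proposition~\ref{invariant}, Lemmas~\ref{coupling_extremal}--\ref{order_extremal}, Proposition~\ref{prop_irred}), definition of $G^Q$ (Corollary~\ref{corollary_invariant}), then the constructive Riemann-to-Cauchy scheme of \cite{bgrs3} (Section~\ref{proof_hydro}), with the quenched statement obtained from the $Q\otimes\Prob$-a.s.\ result by Fubini. One terminological caveat: the paper does not use a ``microscopic entropy inequality'' in the Rezakhanlou sense; Stage~3 is carried out instead via a law of large numbers for currents along characteristics (Propositions~\ref{corollary_2_2}--\ref{proposition_2_2}), resting on a large-deviation bound for space-time empirical measures (Lemma~\ref{deviation_empirical}) and a current monotonicity lemma (Lemma~\ref{current_comparison}), before the macroscopic-stability reduction to step data (Proposition~\ref{hydro_finite}).
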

We refer the reader (for instance) to \cite{serre} for the
definition of entropy solutions.
To define the {\sl macroscopic flux} $G^Q$, let
 the {\sl microscopic flux}  through site 0 be
\beq\label{def_f}
j(\alpha,\eta)&=&j^+(\alpha,\eta)-j^-(\alpha,\eta)\\\nonumber
j^+(\alpha,\eta) &=& \sum_{y,z\in\Z:\,y\leq 0<y+z}\alpha(y)
p(z)b(\eta(y),\eta(y+z))\\\nonumber
 j^-(\alpha,\eta) & = &
\sum_{y,z\in\Z:\,y+z\leq 0<y}\alpha(y) p(z)b(\eta(y),\eta(y+z)) \eeq
We  will show in Corollary \ref{corollary_invariant} below
that there exists a closed subset  $\mathcal R^Q$ of $[0,K]$, a
subset $\mathbf{\widetilde{A}}^Q$ of ${\mathbf A}$ with
$Q$-probability $1$ (both depending also on $p(.)$ and $b(.,.)$),
and a family of probability measures
$(\nu^{Q,\rho}_\alpha:\,\alpha\in\mathbf{\widetilde{A}}^Q,\rho\in\mathcal
R^Q)$ on ${\mathbf X}$,  such
that, for every $\rho\in\mathcal R^Q$:\\
\label{properties_b}
\noindent {\em (B1)} For every  $\alpha\in\mathbf{\widetilde{A}}^Q$, $\nu^{Q,\rho}_\alpha$
is an
invariant measure for $L_\alpha$.\\
{\em (B2)} For every  $\alpha\in\mathbf{\widetilde{A}}^Q$, $\nu^{Q,\rho}_\alpha$-a.s.,
$$\lim_{l\to\infty}(2l+1)^{-1}\sum_{x\in\Z:\,|x|\leq
l}\eta(x)=\rho$$
{\em (B3)} The quantity
\be \label{flux} G^Q_\alpha(\rho):=\int j(\alpha,\eta)\nu^{Q,\rho}_\alpha(d\eta) \ee
does not depend on  $\alpha\in\mathbf{\widetilde{A}}^Q$.
Hence we define $G^Q(\rho)$ as \eqref{flux} for $\rho\in\mathcal R^Q$ and
extend it by linear interpolation on the complement of $\mathcal R^Q$,
which is a finite or countably infinite union of disjoint open
intervals. \\ \\
The function $G^Q$ is Lipschitz continuous  (see Remark \ref{remark_lipschitz} below), which is  the minimum regularity required
for the classical theory of entropy solutions.  We cannot say more about $G^Q$ in general, because
the measures $\nu^{Q,\rho}_\alpha$ are not explicit. This is true even in the spatially homogeneous case $\alpha(x)\equiv 1$, unless $b$ satisfies additional algebraic relations introduced in \cite{coc}. In the absence of disorder, for the exclusion process and a few simple models satisfying these relations (see for instance \cite[Section 4]{bgrs1}), we have an explicit flux function. Nevertheless, invariant measures are no longer computable when introducing disorder, so that the effect of the latter on the flux function is difficult to evaluate.
However, in the special case $b(n,m)={\bf 1}_{\{n>0\}}{\bf 1}_{\{m<K\}},\,p(1)=1$, $G^Q$ is shown to be concave in \cite{timo}, as a consequence of the variational approach used  to derive hydrodynamic limit. But this approach does not apply to the models we
consider in the present paper. 
\section{The disorder-particle process}
\label{sec_disorder_particle}
In this section we study invariant measures for the 
markovian  {\em joint
process} $(\alpha_t,\eta_t)_{t\ge 0}$ on ${\mathbf A}\times {\mathbf
X}$ with generator given by, for any local function $f$ on ${\mathbf
A}\times{\mathbf X}$,
\be \label{generator_joint-general} L
f(\alpha,\eta)=\sum_{x\in{\Z}}\int_{\mathcal V}\left[
f\left(\alpha,{\mathcal T}^{\alpha,x,v}\eta\right)-f(\alpha,\eta)
\right]m(dv) \ee
that is, for the particular model \eqref{update_misanthrope},
\be \label{generator_joint} L
f(\alpha,\eta)=\sum_{x,y\in{\Z}}\alpha(x) p(y-x)b(\eta(x),\eta(y))
\left[ f\left(\alpha,\eta^{x,y} \right)-f(\alpha,\eta) \right] \ee
We denote by
$(S(t),t\in\R^+)$ the semigroup generated by $L$. Given
$\alpha_0=\alpha$, this dynamics simply means that $\alpha_t=\alpha$
for all $t\geq 0$, while $(\eta_t)_{t\ge 0}$  is a Markov process with
generator $L_\alpha$ given by \eqref{generator}. Note that $L$ is
\textit{translation invariant}, that is 
\be\label{eq:L-transl-inv}\tau_x L=L\tau_x\ee
 where $\tau_x$
acts jointly on $(\alpha,\eta)$. This is equivalent to a  \textit{shift commutation
property} for the quenched dynamics:
\be\label{commutation}
L_\alpha\tau_x=\tau_x L_{\tau_x\alpha}
\ee
where, since $L_\alpha$ is a Markov generator on ${\mathbf X}$,  the first $\tau_x$ on the r.h.s.  acts only on $\eta$.
We need to introduce a conditional stochastic order.  For the sequel,
we define the set $\overline{\mathcal O}  =  \overline{\mathcal O}_+\cup\overline{\mathcal O}_-$, where
\begin{eqnarray}
\overline{\mathcal O}_+ & = & \{(\alpha,\eta,\xi)\in{\mathbf
A}\times{\mathbf X}^2: \eta\leq\xi \}\cr \overline{\mathcal O}_- & =
& \{(\alpha,\eta,\xi)\in{\mathbf A}\times{\mathbf X}^2: \xi\leq\eta
\} \label{def_ordered_set}
\end{eqnarray}
\begin{lemma}
\label{lemma_conditional}
For two probability measures $\mu^1=\mu^1(d\alpha,d\eta)$,
$\mu^2=\mu^2(d\alpha,d\eta)$ on ${\mathbf A}\times{\mathbf X}$, the
following properties  (denoted  by $\mu^1\ll\mu^2$) are
equivalent:
(i) For every bounded measurable local function $f$ on ${\mathbf
A}\times{\mathbf X}$, such that $f(\alpha,.)$ is nondecreasing for
all $\alpha\in{\mathbf A}$, we have $\int f\,d\mu^1\leq\int
f\,d\mu^2$.
(ii) The measures $\mu^1$ and $\mu^2$ have a common
$\alpha$-marginal  $Q$, and
$\mu^1(d\eta|\alpha)\leq\mu^2(d\eta|\alpha)$ for $Q$-a.e.
$\alpha\in{\mathbf A}$.
(iii) There exists a coupling measure $\overline{\mu}(d\alpha,d\eta,d\xi)$ supported on
$\overline{\mathcal O}_+$ under which $(\alpha,\eta)\sim\mu^1$ and $(\alpha,\xi)\sim\mu^2$.
\end{lemma}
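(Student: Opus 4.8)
The plan is to prove the cycle of implications $(iii)\Rightarrow(i)\Rightarrow(ii)\Rightarrow(iii)$, which is the most economical route since $(iii)\Rightarrow(i)$ is immediate and the conditioning machinery is only needed once. For $(iii)\Rightarrow(i)$: if $\overline{\mu}$ is supported on $\overline{\mathcal O}_+$ with the stated marginals, then for $f$ with $f(\alpha,\cdot)$ nondecreasing we have $f(\alpha,\eta)\le f(\alpha,\xi)$ $\overline{\mu}$-a.s., and integrating gives $\int f\,d\mu^1=\int f(\alpha,\eta)\,d\overline{\mu}\le\int f(\alpha,\xi)\,d\overline{\mu}=\int f\,d\mu^2$. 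This uses nothing beyond the definition of $\overline{\mathcal O}_+$ in \eqref{def_ordered_set}.

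For $(i)\Rightarrow(ii)$, I would first check the common $\alpha$-marginal: applying $(i)$ to functions of the form $f(\alpha,\eta)=g(\alpha)$ (which are trivially ``nondecreasing in $\eta$'') and also to $-g(\alpha)=f(\alpha,\eta)$, we get $\int g\,dQ^1=\int g\,dQ^2$ for all bounded measurable $g$ on $\mathbf A$, hence $Q^1=Q^2=:Q$. Now disintegrate $\mu^i(d\alpha,d\eta)=Q(d\alpha)\mu^i(d\eta\mid\alpha)$; since $\mathbf A$ and $\mathbf X$ are compact metric spaces, regular conditional probabilities exist. The goal is to show $\mu^1(d\eta\mid\alpha)\le\mu^2(d\eta\mid\alpha)$ for $Q$-a.e.\ $\alpha$, i.e.\ $\int h\,d\mu^1(\cdot\mid\alpha)\le\int h\,d\mu^2(\cdot\mid\alpha)$ for every bounded nondecreasing $h$ on $\mathbf X$. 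It suffices to verify this for $h$ ranging over a countable family that is convergence-determining for the stochastic order on $\mathcal P(\mathbf X)$ — e.g.\ finite products/maxima of coordinate functions, or a countable dense set of continuous nondecreasing functions. Fix such an $h$; by $(i)$ applied to $f(\alpha,\eta)=\indicator_B(\alpha)h(\eta)$ for an arbitrary Borel set $B\subseteq\mathbf A$ (still nondecreasing in $\eta$ since $h\ge 0$ WLOG after adding a constant), we get $\int_B\left(\int h\,d\mu^1(\cdot\mid\alpha)\right)Q(d\alpha)\le\int_B\left(\int h\,d\mu^2(\cdot\mid\alpha)\right)Q(d\alpha)$; since $B$ is arbitrary this forces $\int h\,d\mu^1(\cdot\mid\alpha)\le\int h\,d\mu^2(\cdot\mid\alpha)$ for $Q$-a.e.\ $\alpha$. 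Taking the countable intersection of these full-measure sets over all $h$ in the family yields a single $Q$-full set on which the stochastic order $\mu^1(d\eta\mid\alpha)\le\mu^2(d\eta\mid\alpha)$ holds.

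For $(ii)\Rightarrow(iii)$, use Strassen's theorem (cited as \cite{strassen}) on $\mathbf X$: for each $\alpha$ in the $Q$-full set where $\mu^1(d\eta\mid\alpha)\le\mu^2(d\eta\mid\alpha)$, there is a coupling $\overline{\mu}_\alpha(d\eta,d\xi)$ on $\mathbf X^2$ supported on $\{\eta\le\xi\}$ with the right marginals. The remaining issue is measurability of $\alpha\mapsto\overline{\mu}_\alpha$, so that $\overline{\mu}(d\alpha,d\eta,d\xi):=Q(d\alpha)\overline{\mu}_\alpha(d\eta,d\xi)$ is a well-defined probability measure; then it is automatically supported on $\overline{\mathcal O}_+$ and has $(\alpha,\eta)\sim\mu^1$, $(\alpha,\xi)\sim\mu^2$. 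The measurable-selection step is the main obstacle: I would handle it by exhibiting the coupling explicitly rather than invoking an abstract selection theorem — for instance via the monotone (Markov) coupling already available from complete monotonicity, or by the standard quantile/Strassen construction built from the conditional distribution functions of the coordinates, both of which depend measurably on $\alpha$ because $\mu^i(d\eta\mid\alpha)$ does. Alternatively, one can bypass the selection entirely by coupling the graphical constructions: run the basic coupling \eqref{coupling_t} from initial data drawn according to a coupling of $\mu^1(\cdot\mid\alpha)$ and $\mu^2(\cdot\mid\alpha)$ — but since no dynamics are needed here, the cleanest route is the explicit quantile coupling, whose joint measurability in $(\alpha,\eta,\xi)$ follows from that of the conditional c.d.f.'s via a monotone-class argument.
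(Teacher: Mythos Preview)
Your proposal is correct and follows essentially the same route as the paper: the same cycle of implications, the same product test functions $g(\alpha)h(\eta)$ for $(i)\Rightarrow(ii)$ with the countable-dense-family trick to swap the quantifiers, and the same fiberwise application of Strassen for $(ii)\Rightarrow(iii)$. The only notable difference is that you flag and address the measurability of $\alpha\mapsto\overline{\mu}_\alpha$ in the Strassen step, whereas the paper simply writes $\overline{\mu}(d\alpha,d\eta,d\xi):=\int_{\mathbf A}[\delta_\beta(d\alpha)\overline{\mu}_\alpha(d\eta,d\xi)]Q(d\beta)$ without further comment.
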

\begin{proof}{lemma}{lemma_conditional}
\textit{(ii)}$\Rightarrow$\textit{(i)} follows from conditioning.
For \textit{(i)}$\Rightarrow$\textit{(ii)}, consider
$f(\alpha,\eta)=g(\alpha)h(\eta)$, where $g$ is a nonnegative
measurable function on $\mathbf A$ and $h$ a nondecreasing local
function on ${\mathbf X}$. Specializing to $h\equiv 1$, using both
$f$ and $-f$ in \textit{(i)}, we obtain
$$\dsp\int g(\alpha)\mu^1(d\alpha,d\eta)= \int
g(\alpha)\mu^2(d\alpha,d\eta)$$
Thus $\mu^1$ and $\mu^2$ have a common $\alpha$-marginal  $Q$. Now with a
general $h$, by conditioning, we have
$$
\int g(\alpha)\left(\int
h(\eta)\mu^1(d\eta|\alpha)\right)Q(d\alpha)\leq\int
g(\alpha)\left(\int h(\eta)\mu^2(d\eta|\alpha)\right)Q(d\alpha)
$$
Thus, for any nondecreasing local function $h$ on ${\mathbf X}$,
$$\int
h(\eta)\mu^1(d\eta|\alpha)\leq\int h(\eta)\mu^2(d\eta|\alpha)$$
holds  $Q(d\alpha)$-a.e. Since the set of nondecreasing local
functions on ${\mathbf X}$ has a countable dense subset (w.r.t.
uniform convergence), we can exchange ``for any $h$'' and
``$Q$-a.e.'' In other words,
$\mu^1(d\eta|\alpha)\leq\mu^2(d\eta|\alpha)$ for  $Q$-a.e.
$\alpha\in\mathbf A$.\\ \\
For \textit{(ii)}$\Rightarrow$\textit{(iii)}, by Strassen's theorem
(\cite{strassen}), for $Q$-a.e. $\alpha\in{\mathbf A}$, there exists
a coupling measure $\overline{\mu}_\alpha(d\eta,d\xi)$  on ${\mathbf
X}^2$  under which $\eta\sim\mu^1(.|\alpha)$,
$\xi\sim\mu^2(.|\alpha)$, and $\eta\leq\xi$ a.s. Then
$\overline{\mu}(d\alpha,d\eta,d\xi):=\int_{\mathbf
A}[\delta_\beta(d\alpha)\overline{\mu}_\alpha(d\eta,d\xi)]Q(d\beta)$
yields the desired coupling.
\textit{(iii)}$\Rightarrow$\textit{(i)} is straightforward.
\end{proof}
\mbox{}\\ \\
We now state the main result of this section.  Let $\mathcal I_L$,
$\mathcal S$ and ${\mathcal S}^{\mathbf A}$ denote the sets of
probability measures  that are respectively  invariant for $L$,
shift-invariant
 on $\mathbf{A}\times\mathbf{X}$ and
shift-invariant  on $\mathbf A$.
\begin{proposition}\label{invariant}
For every $Q\in{\mathcal S}^{\mathbf A}_e$, there exists a closed
subset $\mathcal R^Q$ of $[0,K]$ containing $0$ and $K$, such that
\begin{eqnarray*}
\left(\mathcal I_{L}\cap\mathcal S\right)_e & = &
\left\{\nu^{Q,\rho},\,Q\in{\mathcal S}^{\mathbf
A}_e,\,\rho\in{\mathcal R}^Q\right\}
\end{eqnarray*}
where index $e$ denotes the set of extremal elements, and
$(\nu^{Q,\rho}:\,\rho\in\mathcal R^Q)$ is a family of
shift-invariant measures on ${\mathbf A}\times{\mathbf X}$, weakly
continuous with respect to $\rho$, such that
\beq\label{densite-rho}
\int\eta(0)\nu^{Q,\rho}(d\alpha,d\eta)&=&\rho\\
\label{Rezakhanlou}
\lim_{l\to\infty}(2l+1)^{-1}\sum_{x\in\Z:|x|\le l}\eta(x)&=&\rho,
\quad \nu^{Q,\rho}-\mbox{a.s.} \\
\label{ordered_measures}\rho\leq\rho'&\Rightarrow&\nu^{Q,\rho}\ll\nu^{Q,\rho'}\eeq
\end{proposition}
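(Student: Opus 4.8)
The plan is to characterize $(\mathcal I_L \cap \mathcal S)_e$ by combining the general theory of invariant and translation-invariant measures for attractive systems with the product structure coming from the disorder being frozen. First I would observe that since $L$ leaves the $\alpha$-coordinate fixed, any $\mu \in \mathcal I_L \cap \mathcal S$ disintegrates as $\mu(d\alpha,d\eta) = Q(d\alpha)\mu(d\eta\mid\alpha)$ where $Q$ is shift-invariant on $\mathbf A$ and, for $Q$-a.e. $\alpha$, $\mu(\cdot\mid\alpha)$ is invariant for $L_\alpha$; extremality of $\mu$ in $\mathcal I_L\cap\mathcal S$ should force $Q$ to be extremal (ergodic) in $\mathcal S^{\mathbf A}$, by a standard ergodic-decomposition argument, so we may fix $Q\in\mathcal S^{\mathbf A}_e$. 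The core of the work is then: for such a fixed $Q$, produce the family $(\nu^{Q,\rho})_{\rho\in\mathcal R^Q}$ and show it exhausts the extremal points.

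The construction I would use mirrors the classical Liggett/Andjel approach as adapted in \cite{bgrs3}. For a density $\rho\in[0,K]$, start from a translation-invariant, $Q$-fibered ``grand canonical'' initial law — e.g. the product measure with $\alpha\sim Q$ and, conditionally on $\alpha$, $\eta$ a product of distributions with mean $\rho$ (Bernoulli-$(\rho/K)$-type on each site), call it $\mu_\rho$ — and consider the Cesàro limits of $\mu_\rho S(t)$ as $t\to\infty$. By attractiveness \eqref{attractive_2} (more precisely the complete monotonicity \eqref{attractive_0}), monotonicity in $\rho$ of the initial measures is preserved by the dynamics, giving a monotone family; compactness of $\mathcal P(\mathbf A\times\mathbf X)$ yields subsequential limits $\nu^{Q,\rho}\in\mathcal I_L\cap\mathcal S$. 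Translation-invariance of $L$ \eqref{eq:L-transl-inv} keeps these limits in $\mathcal S$ and keeps the $\alpha$-marginal equal to $Q$. The empirical-density statement \eqref{Rezakhanlou} follows because the macroscopic particle density is conserved by the dynamics (the flux is local and the configuration bounded), an argument of Rezakhanlou type; by the ergodic theorem applied under the shift-invariant $\nu^{Q,\rho}$, the spatial average converges a.s. to its mean, and conservation identifies the mean as $\rho$, which also gives \eqref{densite-rho}. Monotonicity \eqref{ordered_measures} is inherited from the coupling of the initial product measures via Lemma \ref{lemma_conditional}; one then defines $\mathcal R^Q$ as the set of $\rho$ for which the construction produces an \emph{extremal} element (equivalently, the set of ``achievable'' densities, which is closed as a decreasing intersection of closed sets and contains $0,K$ trivially from the empty and full configurations). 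Weak continuity in $\rho$ on $\mathcal R^Q$ comes from monotone convergence of the bracketing families from left and right together with equality of densities of the two one-sided limits on $\mathcal R^Q$.

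For the converse inclusion — that every extremal $\mu\in\mathcal I_L\cap\mathcal S$ is one of the $\nu^{Q,\rho}$ — I would argue as follows. Fix such a $\mu$; its $\alpha$-marginal is some $Q\in\mathcal S^{\mathbf A}_e$, and by the ergodic theorem under the shift the spatial density $\lim_l (2l+1)^{-1}\sum_{|x|\le l}\eta(x)$ exists $\mu$-a.s. and is a.s. constant $=\rho$ by extremality. The task is to show $\mu=\nu^{Q,\rho}$, i.e. that $\rho$ determines the extremal invariant-and-shift-invariant measure with $\alpha$-marginal $Q$. This is exactly where the lack of explicit invariant measures bites, and it is the main obstacle: in the homogeneous case one uses the characterization of $(\mathcal I\cap\mathcal S)_e$ by density together with a coupling/irreducibility argument (Liggett, Andjel), and here one must run the analogous argument for the joint process. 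The key tools are irreducibility \eqref{A1}, non-degeneracy \eqref{A4}, the complete monotonicity / multi-coupling, and the shift-commutation \eqref{commutation}: one couples $\mu$ with $\nu^{Q,\rho}$ on $\overline{\mathcal O}$ using the coupled graphical construction, shows the discrepancies are of a single sign on each fiber (attractiveness preserves $\overline{\mathcal O}_\pm$), and uses that second-class-particle-type arguments plus equality of densities force the discrepancy density to zero, hence $\mu\ll\nu^{Q,\rho}$ and $\nu^{Q,\rho}\ll\mu$, which by Lemma \ref{lemma_conditional} and equality of $\alpha$-marginals gives $\mu(d\eta\mid\alpha)=\nu^{Q,\rho}(d\eta\mid\alpha)$ for $Q$-a.e. $\alpha$, i.e. $\mu=\nu^{Q,\rho}$. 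I expect the delicate points to be (a) making the density-conservation and ergodic-averaging rigorous for the joint (non-compact-alphabet-in-$\alpha$ but compact overall) process, and (b) the coupling step showing uniqueness given the density — adapting the attractive-systems machinery of \cite{bgrs3} to the disordered fibered setting, where translation invariance holds only for the joint process and not fiberwise.
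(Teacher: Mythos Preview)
Your route diverges from the paper's in a significant structural way. The paper does \emph{not} construct the $\nu^{Q,\rho}$ as Ces\`aro limits of product initial laws; it defines $\mathcal R^Q$ directly as the set of values $\int\eta(0)\,d\nu$ for $\nu\in(\mathcal I_L\cap\mathcal S)_e$ with $\alpha$-marginal $Q$, and then shows that density \emph{parametrizes} these extremals by proving that any two of them are $\ll$-comparable. The chain is: Lemma~\ref{coupling_extremal} produces an \emph{extremal} coupling in $(\mathcal I_{\overline L}\cap\overline{\mathcal S})_e$ of the two given extremals; Proposition~\ref{prop_irred} (the discrepancy-killing argument you correctly identify) says any such coupling lives on $\overline{\mathcal O}=\overline{\mathcal O}_+\cup\overline{\mathcal O}_-$; and Lemma~\ref{order_extremal}, a $0$--$1$ law exploiting that $\overline{\mathcal O}_\pm$ are absorbing for the coupled dynamics, forces an extremal coupling to sit entirely on one of $\overline{\mathcal O}_+$, $\overline{\mathcal O}_-$. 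Thus $\nu^1\ll\nu^2$ or $\nu^2\ll\nu^1$, and the densities decide which; equal density then forces equality. Nothing is built from product measures.

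Your Ces\`aro construction creates problems the paper avoids: subsequential limits need not be unique or extremal, and defining $\mathcal R^Q$ as ``those $\rho$ for which the construction yields an extremal element'' is circular without an independent criterion. More importantly, your uniqueness argument is missing a step the paper makes explicit: from ``an invariant shift-invariant coupling is supported on $\overline{\mathcal O}$'' you cannot conclude a \emph{global} sign for the ordering, since the coupling may assign positive mass to both $\overline{\mathcal O}_+$ and $\overline{\mathcal O}_-$. You must first pass to an extremal coupling (Lemma~\ref{coupling_extremal}) and then invoke the $0$--$1$ law (Lemma~\ref{order_extremal}) to fix the sign; without this, ``equal density $\Rightarrow$ equality'' does not follow. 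Finally, for \eqref{Rezakhanlou} the paper appeals to a \emph{space-time} ergodic theorem for the joint process (as in \cite[Lemma~14]{mrs}); your spatial ergodic theorem plus conservation only gives convergence to a shift-invariant random variable, and identifying it as a.s.\ constant requires precisely the space-time ergodicity that extremality in $\mathcal I_L\cap\mathcal S$ supplies.
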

For $\rho=0\in {\mathcal R^Q}$  (resp. $\rho=K\in {\mathcal R^Q}$)
we get the invariant distribution $\delta_0^{\otimes\Z}$ (resp.
$\delta_K^{\otimes\Z}$), the deterministic distribution of
the configuration with no particles (resp. with maximum number
of particles $K$ everywhere).
\begin{remark}
The set ${\mathcal R^Q}$ and measures $\nu^{Q,\rho}$ also depend on
$p(.)$ and $b(.,.)$, but we did not reflect this in the notation
because only $Q$ varies in Proposition \ref{invariant}.
\end{remark}
\begin{corollary}
\label{corollary_invariant}
(i)  The family of probability measures
$\nu^{Q,\rho}_\alpha(.):=\nu^{Q,\rho}(.|\alpha)$  on ${\mathbf X}$
satisfies properties
(B1)--(B3) on page \pageref{properties_b};
(ii)  for $\rho\in\mathcal R^Q$,  $G^Q(\rho)=\int
j(\alpha,\eta)\nu^{Q,\rho}(d\alpha,d\eta)$.
\end{corollary}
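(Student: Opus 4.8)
\textbf{Proof plan for Corollary \ref{corollary_invariant}.}

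The plan is to transfer the statements of Proposition \ref{invariant}, which concern the joint shift-invariant measures $\nu^{Q,\rho}$ on $\mathbf{A}\times\mathbf{X}$, into quenched statements about the conditional measures $\nu^{Q,\rho}_\alpha(\cdot)=\nu^{Q,\rho}(\cdot\mid\alpha)$, using the disintegration $\nu^{Q,\rho}(d\alpha,d\eta)=\nu^{Q,\rho}_\alpha(d\eta)\,Q(d\alpha)$. The definition of $\mathbf{\widetilde A}^Q$ will be as the intersection of the ($Q$-full-measure) sets of $\alpha$ on which each of (B1), (B2) and (B3) can be verified; since this is a countable intersection of $Q$-full sets it is again $Q$-full, and $\mathcal R^Q$ is the closed set supplied by Proposition \ref{invariant}.

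First I would establish (B1). Invariance of $\nu^{Q,\rho}$ for $L$ means $\int Lf\,d\nu^{Q,\rho}=0$ for all local $f$. Apply this to functions of the form $f(\alpha,\eta)=g(\alpha)h(\eta)$; since the quenched dynamics leaves $\alpha$ frozen, $Lf(\alpha,\eta)=g(\alpha)L_\alpha h(\eta)$, so $\int g(\alpha)\bigl(\int L_\alpha h(\eta)\,\nu^{Q,\rho}_\alpha(d\eta)\bigr)Q(d\alpha)=0$ for every bounded measurable $g$. Hence $\int L_\alpha h\,d\nu^{Q,\rho}_\alpha=0$ for $Q$-a.e.\ $\alpha$, and after taking a countable uniformly dense family of local functions $h$ (as in the proof of Lemma \ref{lemma_conditional}) one gets a single $Q$-full set on which $\nu^{Q,\rho}_\alpha$ is invariant for $L_\alpha$. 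Next, (B2): the convergence \eqref{Rezakhanlou} holds $\nu^{Q,\rho}$-a.s.; writing this as an integral over $\alpha$ of the $\nu^{Q,\rho}_\alpha$-probability of the (measurable, tail) event that the Cesàro averages converge to $\rho$, that probability must equal $1$ for $Q$-a.e.\ $\alpha$, which is exactly (B2). Finally, (B3): by part (ii), which I would prove simultaneously, $G^Q_\alpha(\rho)=\int j(\alpha,\eta)\,\nu^{Q,\rho}_\alpha(d\eta)$ and $\int j(\alpha,\eta)\,\nu^{Q,\rho}(d\alpha,d\eta)=\int G^Q_\alpha(\rho)\,Q(d\alpha)$; so it suffices to show $G^Q_\alpha(\rho)$ is $Q$-a.s.\ constant. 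This I would get from \eqref{ordered_measures} together with shift-invariance: the stationarity of $\nu^{Q,\rho}$ under $\tau_1$ combined with the shift commutation \eqref{commutation} forces the map $\alpha\mapsto\nu^{Q,\rho}_\alpha$ to be covariant, $\nu^{Q,\rho}_{\tau_x\alpha}=\tau_x\nu^{Q,\rho}_\alpha$ for $Q$-a.e.\ $\alpha$, hence $G^Q_{\tau_x\alpha}(\rho)=G^Q_\alpha(\rho)$ (the flux through $0$ is translation-covariant in the appropriate sense, using \eqref{def_f}); ergodicity of $Q$ under the shift then makes this shift-invariant function $Q$-a.s.\ constant, with the constant necessarily equal to $\int j\,d\nu^{Q,\rho}$.

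I expect the main obstacle to be the measure-theoretic care needed in the "disintegration then countable dense family" step: one must check that the exceptional $Q$-null sets arising from each local test function $h$ (for (B1)) and from the flux functional $j(\alpha,\cdot)$ — which is an infinite sum, so one should first argue its $\nu^{Q,\rho}_\alpha$-integrability, using assumption \textit{(A2)} and boundedness of $b$ — can be assembled into a single $Q$-null set, and that the covariance identity $\nu^{Q,\rho}_{\tau_x\alpha}=\tau_x\nu^{Q,\rho}_\alpha$ genuinely holds pointwise off a $Q$-null set rather than merely in distribution. The rest is routine bookkeeping with conditional expectations.
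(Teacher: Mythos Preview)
Your treatment of (B1), (B2) and (ii) is essentially the paper's: disintegrate, test against product functions $g(\alpha)h(\eta)$, and pass to a countable dense family of local $h$. That part is fine.

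The gap is in your argument for (B3). You correctly derive the covariance $\nu^{Q,\rho}_{\tau_x\alpha}=\tau_x\nu^{Q,\rho}_\alpha$ from shift-invariance of $\nu^{Q,\rho}$, but the step ``hence $G^Q_{\tau_x\alpha}(\rho)=G^Q_\alpha(\rho)$'' does not follow from translation-covariance of $j$ alone. Covariance gives
\[
G^Q_{\tau_1\alpha}(\rho)=\int j(\tau_1\alpha,\eta)\,\nu^{Q,\rho}_{\tau_1\alpha}(d\eta)
=\int j(\tau_1\alpha,\tau_1\eta)\,\nu^{Q,\rho}_{\alpha}(d\eta),
\]
and $j(\tau_1\alpha,\tau_1\eta)$ is the current across site $1$ in the original coordinates, which is \emph{not} equal to $j(\alpha,\eta)$ pointwise. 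What is missing is the microscopic continuity equation
\[
L_\alpha[\eta(1)]=j(\alpha,\eta)-j(\tau_1\alpha,\tau_1\eta),
\]
whose $\nu^{Q,\rho}_\alpha$-expectation vanishes by (B1); this is precisely how the paper closes the loop. Your parenthetical ``using \eqref{def_f}'' and the reference to \eqref{ordered_measures} do not supply this --- \eqref{ordered_measures} (the stochastic ordering in $\rho$) is irrelevant here. Once you insert the continuity-equation step, ergodicity of $Q$ finishes (B3) exactly as you say.
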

\begin{remark}
\label{remark_flux}
By \textit{(ii)} of Corollary \ref{invariant}, and shift-invariance of $\nu^{Q,\rho}(d\alpha,d\eta)$, 
\be
G^Q(\rho) = \int j(\alpha,\eta)\nu^{Q,\rho}(d\alpha,d\eta)
=\int \widetilde{\jmath}(\alpha,\eta)\nu^{Q,\rho}(d\alpha,d\eta)
\ee
for every $\rho\in\mathcal R^Q$, where
\be\label{other_flux}
\widetilde{\jmath}(\alpha,\eta):=\alpha(0)\sum_{z\in\Z}zp(z)b(\eta(0),\eta(z))
\ee
Thus one can alternatively take $\widetilde{\jmath}(\alpha,\eta)$ as a microscopic flux function
 (we refer to \cite[p. 1347]{bgrs2} for an analogous remark in the non-disordered
setting).
\end{remark}
\begin{proof}{corollary}{corollary_invariant}
Properties {\em (B1)} and {\em (B2)} follow from Proposition
\ref{invariant} by conditioning  (here and after,  we proceed as in
the proof of Lemma \ref{lemma_conditional}).  By translation
invariance of  $\nu^{Q,\rho}$  and conditioning we have, for
$Q$-a.e. $\alpha\in\mathbf A$,
\be\label{commutation_inv}
\tau_x\nu^{Q,\rho}_\alpha=\nu^{Q,\rho}_{\tau_x\alpha} \ee
where $\tau_x$ on the l.h.s. acts on ${\mathbf X}$. For property
{\em (B3)}
the result will follow from ergodicity of $Q$ once we
show that, for every $\rho\in\mathcal R^Q$,
$G^Q_\alpha(\rho)=G^Q_{\tau_1\alpha}(\rho)$ holds $Q$-a.s. To this end
we note that, as a result of \eqref{commutation},
$$L_\alpha[\eta(1)]=j(\alpha,\eta)-j(\tau_1\alpha,\tau_1\eta)$$
Taking expectation w.r.t. invariant measure $\nu_\alpha^{Q,\rho}$, and
using \eqref{commutation_inv}, we obtain
\begin{eqnarray*}
G^Q_\alpha(\rho)=\int j(\alpha,\eta)\nu^{Q,\rho}_\alpha(d\eta) & = & \int
j(\tau_1\alpha,\tau_1\eta)\nu^{Q,\rho}_\alpha(d\eta)\\
& = & \int
j(\tau_1\alpha,\eta)\nu^{Q,\rho}_{\tau_1\alpha}(d\eta)=G^Q_{\tau_1\alpha}(\rho)
\end{eqnarray*}
\end{proof}
\mbox{}\\ \\
To prove Proposition \ref{invariant}, we need some definitions and lemmas.
For every $\alpha\in{\mathbf A}$, we denote by $\overline{L}_\alpha$
the coupled generator on ${\mathbf X}^2$  given by
\be\label{coupling_t}
\overline{L}_\alpha f(\eta,\xi):=\sum_{x\in\Z}\int_{\mathcal V}\left[
f\left({\mathcal T}^{\alpha,x,v}\eta,{\mathcal T}^{\alpha,x,v}\xi\right)-f(\eta,\xi)
\right]m(dv)
\ee
for any local function $f$ on ${\mathbf X}^2$.
For the particular model \eqref{update_misanthrope}, this is equivalent to the ``basic coupling'' of $L_\alpha$
 defined in \cite{coc}, namely  $\overline{L}_\alpha=\sum_{x,y\in\Z:\,x\neq y}\overline{L}^{x,y}_\alpha$, with
$\overline{L}^{x,y}_\alpha f(\eta,\xi)$ given by
\begin{eqnarray}\label{xy-gen-coupl}
&  & \alpha(x)p(y-x)[b(\eta(x),\eta(y))\wedge b(\xi(x),\xi(y))]\left[f(\eta^{x,y},\xi^{x,y})-f(\eta,\xi)\right]\cr
& + & \alpha(x)p(y-x)[b(\eta(x),\eta(y))-b(\xi(x),\xi(y))]^+\left[f(\eta^{x,y},\xi)-f(\eta,\xi)\right]\cr
& + &  \alpha(x)p(y-x)[b(\xi(x),\xi(y))- b(\eta(x),\eta(y))]^+\left[f(\eta,\xi^{x,y})-f(\eta,\xi)\right]
\end{eqnarray}
 If $(\eta_t,\xi_t)$ is a Markov process with generator $\overline{L}_\alpha$,  and $\eta_0\leq\xi_0$, then
$\eta_t\leq\xi_t$ a.s. for every $t>0$. We indicate this by saying
 that $\overline{L}_\alpha$ is a \textit{monotone coupling} of
$L_\alpha$.
We denote by $\overline{L}$ the coupled generator for the joint
process $(\alpha_t,\eta_t,\xi_t)_{t\ge 0}$ on ${\mathbf
A}\times{\mathbf X}^2$ defined by
\be\label{joint_coupling}\overline{L}f(\alpha,\eta,\xi)=(\overline{L}_\alpha
f(\alpha,.))(\eta,\xi)\ee
for any local function $f$ on ${\mathbf A}\times{\mathbf X}^2$.
Given $\alpha_0=\alpha$, this means that $\alpha_t=\alpha$ for all
$t\geq 0$, while $(\eta_t,\xi_t)_{t\ge 0}$ is a Markov process with
generator $\overline{L}_\alpha$.  Let  $\overline{S}(t)$ denote the
semigroup generated by $\overline{L}$.
 We denote by  $\overline{\mathcal S}$ the set of probability measures on
${\mathbf A}\times{\mathbf X}^2$ that are invariant by space shift
$\tau_x(\alpha,\eta,\xi)=(\tau_x\alpha,\tau_x\eta,\tau_x\xi)$.
In the following, if $\overline{\nu}(d\alpha,d\eta,d\xi)$ is a
probability measure on ${\mathbf A}\times{\mathbf X}^2$,
$\overline{\nu}_1$, $\overline{\nu}_2$ and  $\overline{\nu}_3$
(resp. $\overline{\nu}_{12}$ and $\overline{\nu}_{13}$)   denote
marginal distributions of  $\alpha$, $\eta$ and $\xi$ (resp. $(\alpha,\eta)$
and $(\alpha,\xi)$) under $\overline{\nu}$.
\begin{lemma}
\label{coupling_extremal}
Let $\mu',\mu''\in(\mathcal I_{L}\cap\mathcal S)_e$ with a
common $\alpha$-marginal $Q$. Then there exists
$\overline{\nu}\in\left(\mathcal
I_{\overline{L}}\cap\overline{\mathcal S}\right)_e$
such that $\overline{\nu}_{12}=\mu'$ and
$\overline{\nu}_{13}=\mu''$.
\end{lemma}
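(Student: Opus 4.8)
\textbf{Proof strategy for Lemma \ref{coupling_extremal}.}

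The plan is to build a monotone coupling of $\mu'$ and $\mu''$ fibre by fibre over the common disorder marginal $Q$, average it to get \emph{some} element of $\mathcal I_{\overline L}\cap\overline{\mathcal S}$ with the prescribed $(\alpha,\eta)$- and $(\alpha,\xi)$-marginals, and then extract an extremal such element by an ergodic decomposition argument that preserves the marginals. First I would observe that since $\mu'$ and $\mu''$ are both in $(\mathcal I_L\cap\mathcal S)_e$ with common $\alpha$-marginal $Q$, and since $L$ is translation invariant, $Q\in{\mathcal S}^{\mathbf A}$; moreover the extremality of $\mu',\mu''$ combined with the structure of $L$ (which leaves $\alpha$ frozen) forces $Q$ to be extremal in ${\mathcal S}^{\mathbf A}$, i.e.\ $Q$ is ergodic. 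This will matter for the last step.

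For existence of \emph{a} coupling: disintegrate $\mu'=\int \mu'(d\eta\mid\alpha)\,Q(d\alpha)$ and $\mu''=\int\mu''(d\xi\mid\alpha)\,Q(d\alpha)$, and run the coupled quenched dynamics $\overline L_\alpha$ started from the product measure $\mu'(d\eta\mid\alpha)\otimes\mu''(d\xi\mid\alpha)$ on the fibre $\alpha$; by compactness of ${\mathcal P}({\mathbf X}^2)$ take a Cesàro limit $\overline\nu_\alpha(d\eta,d\xi)$ of $t^{-1}\int_0^t [\mu'(\cdot\mid\alpha)\otimes\mu''(\cdot\mid\alpha)]\overline S_\alpha(s)\,ds$, which is invariant for $\overline L_\alpha$. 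Then set $\overline\nu(d\alpha,d\eta,d\xi)=\int \delta_\beta(d\alpha)\,\overline\nu_\beta(d\eta,d\xi)\,Q(d\beta)$. Because $\mu'$, $\mu''$ are themselves $\overline S_\alpha$-stationary in each coordinate, the $(\alpha,\eta)$- and $(\alpha,\xi)$-marginals of $\overline\nu$ are exactly $\mu'$ and $\mu''$; invariance of $\overline\nu$ under $\overline L$ is immediate from \eqref{joint_coupling} and invariance fibrewise; shift-invariance follows from the shift commutation \eqref{commutation} together with shift-invariance of $Q$ (one checks $\tau_x$ maps the construction on fibre $\alpha$ to the construction on fibre $\tau_x\alpha$, using \eqref{shift_t}), so a further Cesàro average over spatial shifts if needed puts $\overline\nu$ in $\mathcal I_{\overline L}\cap\overline{\mathcal S}$. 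One should also check the coupling is supported on $\overline{\mathcal O}$: since $\overline L_\alpha$ is a monotone coupling, starting from $\mu'(\cdot\mid\alpha)\otimes\mu''(\cdot\mid\alpha)$ does not a priori give an ordered pair, so instead I would start from a Strassen coupling of $\mu'(\cdot\mid\alpha)$ and $\mu''(\cdot\mid\alpha)$ only when $\mu'\ll\mu''$; in the general case the support statement is not claimed, so this point can be skipped.

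Finally, to get an \emph{extremal} element: take the ergodic decomposition of $\overline\nu$ with respect to the commuting actions of the semigroup $(\overline S(t))$ and the shift group $(\tau_x)$, so $\overline\nu=\int \overline\nu_\omega\,\kappa(d\omega)$ with $\kappa$-a.e.\ $\overline\nu_\omega\in(\mathcal I_{\overline L}\cap\overline{\mathcal S})_e$. The point is that the $(\alpha,\eta)$-marginal map $\overline\nu_\omega\mapsto(\overline\nu_\omega)_{12}$ pushes this decomposition forward to an ergodic decomposition of $\mu'=\overline\nu_{12}$ in $\mathcal I_L\cap\mathcal S$; but $\mu'$ is \emph{already} extremal, so $\kappa$-a.e.\ $(\overline\nu_\omega)_{12}=\mu'$, and likewise $(\overline\nu_\omega)_{13}=\mu''$. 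Picking any $\omega$ in the full-measure set where both hold gives the desired $\overline\nu$. I expect the main obstacle to be the last step: justifying that extremality of $\mu'$ in $\mathcal I_L\cap\mathcal S$ survives the pushforward of the ergodic decomposition — one must argue that a nontrivial decomposition of $\overline\nu_{12}$ cannot arise from a genuine decomposition of $\overline\nu$ that keeps the $12$-marginal rigid, equivalently that any invariant shift-invariant $f(\alpha,\eta)$ lifts to an invariant shift-invariant function on ${\mathbf A}\times{\mathbf X}^2$ and hence is $\overline\nu$-a.s.\ constant on the ergodic components. This is where the ergodicity of $Q$ and the Feller/continuity properties of the dynamics get used, and it is the delicate measure-theoretic heart of the argument.
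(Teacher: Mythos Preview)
Your overall plan is sound and your final conclusion is correct, but the route differs from the paper's in a way that makes yours both longer and technically shakier in one spot.

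For the construction step, doing the Ces\`aro limit \emph{fibrewise} on each $\alpha$ and then integrating against $Q$ is problematic: the subsequence along which $t^{-1}\int_0^t[\mu'(\cdot\mid\alpha)\otimes\mu''(\cdot\mid\alpha)]\overline S_\alpha(s)\,ds$ converges depends on $\alpha$, so there is no reason $\alpha\mapsto\overline\nu_\alpha$ is measurable. The paper avoids this entirely by forming the joint measure $\overline\nu^0(d\alpha,d\eta,d\xi):=Q(d\alpha)\,\mu'(d\eta\mid\alpha)\,\mu''(d\xi\mid\alpha)$ first, observing that this is already in $\overline{\mathcal S}$ (so no spatial re-averaging is needed), and then taking a single Ces\`aro time limit of $\overline\nu^0\overline S(t)$ at the joint level. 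This gives an element of the set $\overline{\mathcal M}(\mu',\mu'')$ of measures in $\mathcal I_{\overline L}\cap\overline{\mathcal S}$ with the prescribed $12$- and $13$-marginals.

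For the extremality step, the paper's argument is shorter and more elementary than your ergodic-decomposition route. It simply picks an extreme point $\overline\nu$ of the compact convex set $\overline{\mathcal M}(\mu',\mu'')$ (Krein--Milman), and then checks directly that $\overline\nu$ is extreme in the larger set $\mathcal I_{\overline L}\cap\overline{\mathcal S}$: if $\overline\nu=\lambda\overline\nu^l+(1-\lambda)\overline\nu^r$ with $\overline\nu^l,\overline\nu^r\in\mathcal I_{\overline L}\cap\overline{\mathcal S}$, project onto the $12$- and $13$-coordinates; extremality of $\mu'$ and $\mu''$ in $\mathcal I_L\cap\mathcal S$ forces $\overline\nu^l_{12}=\overline\nu^r_{12}=\mu'$ and $\overline\nu^l_{13}=\overline\nu^r_{13}=\mu''$, so $\overline\nu^l,\overline\nu^r\in\overline{\mathcal M}(\mu',\mu'')$, contradicting extremality of $\overline\nu$ there. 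This uses only finite convex combinations and completely sidesteps the ``delicate measure-theoretic heart'' you worried about. Your ergodic-decomposition argument can be completed (the set $\mathcal I_L\cap\mathcal S$ is a Choquet simplex, so any barycentric representation of the extreme point $\mu'$ by elements of $\mathcal I_L\cap\mathcal S$ collapses to $\delta_{\mu'}$), but it is heavier machinery than needed. Your remark about support on $\overline{\mathcal O}$ is irrelevant here; that is the content of the separate Proposition \ref{prop_irred}.
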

\begin{proof}{lemma}{coupling_extremal}
Let $\overline{\mathcal M}(\mu',\mu'')$ denote the set of
probability measures $\overline{\nu}\in\mathcal
I_{\overline{L}}\cap\overline{\mathcal S}$ with
$\overline{\nu}_{12}=\mu'$ and $\overline{\nu}_{13}=\mu''$. We show
that $\overline{\mathcal M}(\mu',\mu'')$ is a nonempty set.
Set
$\overline{\nu}^0(d\alpha,d\eta,d\xi):=Q(d\alpha)[\mu'(d\eta|\alpha)\otimes\mu''(d\xi|\alpha)]$.
Then
$\overline{\nu}^0_{12}=\mu'$, $\overline{\nu}^0_{13}=\mu''$ and
$\overline{\nu}^0\in\overline{\mathcal S}$. Let
$$
\overline{\nu}^t:=\frac{1}{t}\int_0^t\overline{\nu}^0\overline{S}(s)
ds
$$
The set $\{\overline{\nu}^t,t>0\}$ is relatively compact because
$\overline{\nu}^t_1=Q$ is independent of $t$ and, for $i\in\{2,3\}$,
$\overline{\nu}^t_i\leq\delta_K^{\otimes\Z}$.
Let $\overline{\nu}^\infty$ be any subsequential weak limit of
$\overline{\nu}^t$ as $t\to\infty$. Then $\overline{\nu}^\infty$
retains the above properties of $\overline{\nu}^0$, and 
 $\overline{\nu}^\infty\in \mathcal I_{\overline{L}}$,
thus $\overline{\nu}^\infty\in\overline{\mathcal M}(\mu',\mu'')$.
Let $\overline{\nu}$ be an extremal element of  the compact convex
set $\overline{\mathcal M}(\mu',\mu'')$.  We now prove that
$\overline{\nu}\in\left(\mathcal
I_{\overline{L}}\cap\overline{\mathcal S}\right)_e$.
Assume  there exist $\lambda\in (0,1)$ and probability measures
$\overline{\nu}^l$, $\overline{\nu}^r$ on ${\bf A}\times{\bf X}^2$, such that
\be\label{decomp_2}
\overline{\nu}=\lambda \overline{\nu}^l+(1-\lambda)\overline{\nu}^r
\ee
with $\overline{\nu}^i\in\mathcal
I_{\overline{L}}\cap\overline{\mathcal S}$ for $i\in\{l,r\}$.
Since $\overline{\nu}\in\overline{\mathcal M}(\mu',\mu'')$, the 
projections of \eqref{decomp_2} on  $(\alpha,\eta)$ and $(\alpha,\xi)$ yield
\begin{eqnarray}
\mu' & = & \lambda \overline{\nu}^l_{12}+(1-\lambda)\overline{\nu}^r_{12}\label{decomp_21}\\
\mu'' & = & \lambda
\overline{\nu}^l_{13}+(1-\lambda)\overline{\nu}^r_{13}\label{decomp_22}
\end{eqnarray}
 For
$i\in\{l,r\}$, $\overline{\nu}^i\in\mathcal
I_{\overline{L}}\cap\overline{\mathcal S}$ implies
$\overline{\nu}^i_{1j}\in\mathcal I_{L}\cap\mathcal S$ for
$j\in\{2,3\}$. Since $\mu',\mu''$ belong to $(\mathcal
I_{L}\cap\mathcal S)_e$,  
$\overline{\nu}^i_{12}=\mu'$, $\overline{\nu}^i_{13}=\mu''$ by
\eqref{decomp_21}--\eqref{decomp_22}, that is,
$\overline{\nu}^i\in\overline{\mathcal M}(\mu',\mu'')$. Since
$\overline{\nu}$ is extremal in $\overline{\mathcal M}(\mu',\mu'')$,
\eqref{decomp_2} yields
$\overline{\nu}^l=\overline{\nu}^r=\overline{\nu}$.
\end{proof}
 \begin{lemma}\label{stable}
Let $\nu$ be a stationary distribution for some Markov transition
semigroup, and $(X_t)_{t\ge 0}$ be a Markov process associated to this
semigroup with initial distribution $\nu$.
Assume $A$ is a subset of $E$ such that,  
for every $t>0$, ${\bf 1}_A(X_t)\geq {\bf 1}_A(X_0)$ almost surely. Then
$\nu_A(dx)=\nu(dx|x\in A)$ and $\nu_{A^c}(dx)=\nu(dx|x\in A^c)$ are stationary
for the considered  semigroup.
\end{lemma}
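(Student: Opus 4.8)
The plan is to verify directly that $\nu_A$ is stationary, i.e.\ that $\nu_A S(t) = \nu_A$ for the semigroup $S(t)$ under consideration; the statement for $\nu_{A^c}$ follows by the same argument with $A$ replaced by $A^c$ (noting that ${\bf 1}_A(X_t)\geq{\bf 1}_A(X_0)$ a.s.\ says nothing prevents us from running the identical computation on the complement, since stationarity of $\nu$ combined with the monotonicity hypothesis will force ${\bf 1}_A(X_t)={\bf 1}_A(X_0)$ $\nu$-a.s., see below). The key observation I would extract first is that the hypothesis ``${\bf 1}_A(X_t)\geq{\bf 1}_A(X_0)$ a.s.'' together with stationarity of $\nu$ implies $\Prob_\nu(X_0\in A, X_t\in A^c)=0$ for every $t>0$: indeed $\Exp_\nu[{\bf 1}_A(X_t)-{\bf 1}_A(X_0)]=\nu(A)-\nu(A)=0$ while the integrand is nonnegative a.s., so it vanishes a.s. Hence, starting from $\nu$, the event $\{X_0\in A\}$ is a.s.\ preserved: $X_0\in A\Rightarrow X_t\in A$ a.s.

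With that in hand, the computation is short. For any bounded measurable $f$ on $E$ and any $t>0$,
\[
\int f\,d(\nu_A S(t)) = \frac{1}{\nu(A)}\Exp_\nu\!\left[{\bf 1}_A(X_0)\,f(X_t)\right].
\]
Since ${\bf 1}_A(X_0)=1$ forces ${\bf 1}_A(X_t)=1$ a.s.\ under $\Prob_\nu$, we may insert the factor ${\bf 1}_A(X_t)$:
\[
\Exp_\nu\!\left[{\bf 1}_A(X_0)\,f(X_t)\right]=\Exp_\nu\!\left[{\bf 1}_A(X_0)\,{\bf 1}_A(X_t)\,f(X_t)\right].
\]
On the other hand, by stationarity of $\nu$ the pair $(X_0,X_t)$ has the same law as $(X_{-t},X_0)$ in a stationary two-sided extension — or, avoiding two-sided processes, one argues as follows: $\Exp_\nu[{\bf 1}_A(X_t)f(X_t)]=\int_A f\,d\nu=\nu(A)\int f\,d\nu_A$, and I must relate $\Exp_\nu[{\bf 1}_A(X_0){\bf 1}_A(X_t)f(X_t)]$ to it. The clean way is: the relation $\Prob_\nu(X_0\notin A,X_t\in A)=\nu(A)-\Prob_\nu(X_0\in A, X_t\in A)=\nu(A)-\nu(A)=0$ (using the previous paragraph, $\{X_0\in A\}\subseteq\{X_t\in A\}$ a.s.) shows $\{X_t\in A\}\subseteq\{X_0\in A\}$ a.s.\ as well. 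Therefore ${\bf 1}_A(X_0)$ and ${\bf 1}_A(X_t)$ coincide $\Prob_\nu$-a.s., and consequently
\[
\nu(A)\int f\,d(\nu_A S(t)) = \Exp_\nu\!\left[{\bf 1}_A(X_t)\,f(X_t)\right]=\int_A f\,d\nu = \nu(A)\int f\,d\nu_A,
\]
which gives $\nu_A S(t)=\nu_A$, i.e.\ $\nu_A\in\mathcal I_{S(t)}$ for all $t$. Running the identical argument with $A^c$ in place of $A$ — legitimate because we have just shown ${\bf 1}_{A^c}(X_0)={\bf 1}_{A^c}(X_t)$ $\Prob_\nu$-a.s.\ as well — yields stationarity of $\nu_{A^c}$.

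The only genuine subtlety, and the step I would be most careful about, is the passage from the one-sided inclusion $\{X_0\in A\}\subseteq\{X_t\in A\}$ (which is exactly the hypothesis) to the $\Prob_\nu$-a.s.\ \emph{equality} of the two indicator random variables; this is where stationarity of $\nu$ is essential and is the crux of the lemma. Everything else is a routine manipulation of conditional probabilities. One should also note a harmless degeneracy: if $\nu(A)=0$ or $\nu(A^c)=0$ the corresponding conditioned measure is undefined (or the statement is vacuous), so one tacitly assumes $0<\nu(A)<1$, or simply interprets the claim only for those $A$ for which the conditioning makes sense.
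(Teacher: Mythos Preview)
Your proof is correct and follows essentially the same route as the paper's: both first deduce from stationarity that the nonnegative random variable ${\bf 1}_A(X_t)-{\bf 1}_A(X_0)$ has zero mean and hence vanishes a.s., then replace ${\bf 1}_A(X_0)$ by ${\bf 1}_A(X_t)$ in $\Exp_\nu[{\bf 1}_A(X_0)f(X_t)]$ and invoke stationarity of $\nu$ to conclude. Your exposition is a bit more discursive (the detour through two-sided extensions and the separate derivation of the reverse inclusion are unnecessary once you have noted that the integrand vanishes a.s.), but the argument is the same.
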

\begin{proof}{lemma}{stable}
Since $(X_t)_{t\ge 0}$ is stationary, we have $\Exp [{\bf
1}_A(X_t)]=\Exp [{\bf 1}_A(X_0)]$,  thus ${\bf 1}_A(X_0)={\bf
1}_A(X_t)$ and ${\bf 1}_{A^c}(X_t)={\bf 1}_{A^c}(X_0)$  almost
surely. Stationarity of $\nu_A$ amounts to $\dsp \Exp[f(X_t)|X_0\in
A]=\Exp[f(X_0)|X_0\in A]$ for every bounded $f$. We conclude with
$$
\begin{array}{lllll}
\Exp[f(X_t)|X_0\in A]
& = & \displaystyle\frac{\Exp[f(X_t){\bf 1}_A(X_0)]}{\Exp[{\bf 1}_A(X_0)]}
& = & \displaystyle\frac{\Exp[f(X_t){\bf 1}_A(X_t)]}{\Exp[{\bf 1}_A(X_0)]}\\
 & = & \displaystyle\frac{\Exp[f(X_0){\bf 1}_A(X_0)]}{\Exp[{\bf 1}_A(X_0)]}
& = & \displaystyle\Exp[f(X_0)|X_0\in A]
\end{array}
$$
\end{proof}
\begin{lemma}
\label{order_extremal}
Let $\overline{\nu}\in\left(\mathcal
I_{\overline{L}}\cap\overline{\mathcal S}\right)_e$. Then
$\overline{\nu}\left(\overline{\mathcal O}_+\right)$ and
$\overline{\nu}\left(\overline{\mathcal O}_-\right)$ belong to $\{0,1\}$.
\end{lemma}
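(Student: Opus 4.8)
The plan is to exploit the fact that $\overline{\mathcal O}_+$ and $\overline{\mathcal O}_-$ are each \emph{stable} for the coupled dynamics $\overline{L}$, in the sense required by Lemma \ref{stable}. Recall that $\overline{L}$ is a monotone coupling: if $(\eta_t,\xi_t)$ runs with generator $\overline{L}_\alpha$ and $\eta_0\le\xi_0$ then $\eta_t\le\xi_t$ for all $t\ge0$ almost surely; and the $\alpha$-component is frozen. Hence if $(\alpha_t,\eta_t,\xi_t)_{t\ge0}$ is the Markov process with generator $\overline{L}$, the indicator ${\bf 1}_{\overline{\mathcal O}_+}(\alpha_t,\eta_t,\xi_t)$ is almost surely nondecreasing in $t$: once $\eta_0\le\xi_0$, the inequality persists. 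The same holds for $\overline{\mathcal O}_-$ by the symmetric statement (the basic coupling is symmetric in $\eta,\xi$). This is exactly the hypothesis ${\bf 1}_A(X_t)\ge{\bf 1}_A(X_0)$ a.s.\ needed in Lemma \ref{stable}, with $E={\mathbf A}\times{\mathbf X}^2$, $X_t=(\alpha_t,\eta_t,\xi_t)$, $\nu=\overline{\nu}$, and $A=\overline{\mathcal O}_\pm$.

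First I would record that $\overline{\mathcal O}_+$ and $\overline{\mathcal O}_-$ are closed (hence Borel) subsets of ${\mathbf A}\times{\mathbf X}^2$, so the conditional measures $\overline{\nu}(\cdot\mid\overline{\mathcal O}_\pm)$ make sense whenever the conditioning event has positive $\overline{\nu}$-measure. Next, I would verify the stability of the indicator along the trajectory using the graphical construction from \eqref{coupling_t}: each update ${\mathcal T}^{\alpha,x,v}$ is applied \emph{simultaneously} to both coordinates $\eta$ and $\xi$ via $\overline{L}_\alpha$, and \eqref{attractive_0} guarantees ${\mathcal T}^{\alpha,x,v}$ is nondecreasing on ${\mathbf X}$, so it preserves $\{\eta\le\xi\}$; the $\alpha$ coordinate never changes, so membership in $\overline{\mathcal O}_+$ is preserved for all $t$. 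Then Lemma \ref{stable} applies verbatim: if $\overline{\nu}(\overline{\mathcal O}_+)\in(0,1)$, then both $\overline{\nu}(\cdot\mid\overline{\mathcal O}_+)$ and $\overline{\nu}(\cdot\mid\overline{\mathcal O}_+^c)$ are stationary for $\overline{S}(t)$.

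The final point is to bring in shift-invariance and extremality. The sets $\overline{\mathcal O}_+$ and $\overline{\mathcal O}_-$ are translation invariant, i.e.\ $\tau_x\overline{\mathcal O}_\pm=\overline{\mathcal O}_\pm$, because the partial order on ${\mathbf X}$ is defined coordinatewise and $\tau_x$ just relabels coordinates. Consequently, if $\overline{\nu}\in\overline{\mathcal S}$, then the conditional measures $\overline{\nu}(\cdot\mid\overline{\mathcal O}_\pm)$ and $\overline{\nu}(\cdot\mid\overline{\mathcal O}_\pm^c)$ are again in $\overline{\mathcal S}$. Combining this with the previous paragraph: assuming $\overline{\nu}(\overline{\mathcal O}_+)\in(0,1)$ would exhibit $\overline{\nu}$ as a nontrivial convex combination
\[
\overline{\nu}=\overline{\nu}(\overline{\mathcal O}_+)\,\overline{\nu}(\cdot\mid\overline{\mathcal O}_+)+\overline{\nu}(\overline{\mathcal O}_+^c)\,\overline{\nu}(\cdot\mid\overline{\mathcal O}_+^c)
\]
of two distinct elements of $\mathcal I_{\overline{L}}\cap\overline{\mathcal S}$, contradicting extremality of $\overline{\nu}$. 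Hence $\overline{\nu}(\overline{\mathcal O}_+)\in\{0,1\}$, and the identical argument with $\overline{\mathcal O}_-$ in place of $\overline{\mathcal O}_+$ finishes the proof.

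The only mild subtlety — and the place I would be most careful — is the interchange in Lemma \ref{stable} that turns the a.s.\ monotonicity ${\bf 1}_A(X_t)\ge{\bf 1}_A(X_0)$ into ${\bf 1}_A(X_t)={\bf 1}_A(X_0)$ a.s.\ under a stationary initial law; this is precisely what Lemma \ref{stable} already supplies, so here it is just a matter of citing it correctly. Everything else is bookkeeping: identifying $A$ with $\overline{\mathcal O}_\pm$, noting closedness and translation invariance of these sets, and reading off the coupling monotonicity from \eqref{attractive_0} and the construction \eqref{coupling_t}.
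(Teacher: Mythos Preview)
Your proof is correct and follows essentially the same approach as the paper: you invoke the monotonicity of the basic coupling to verify the hypothesis of Lemma \ref{stable} for $A=\overline{\mathcal O}_\pm$, obtain that the conditional measures on $A$ and $A^c$ are both in $\mathcal I_{\overline L}\cap\overline{\mathcal S}$ (using shift-invariance of $A$), and reach a contradiction with extremality via the resulting nontrivial convex decomposition. The paper's proof is organized identically, with only cosmetic differences in how the contradiction is phrased.
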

\begin{proof}{lemma}{order_extremal}
Let $A=\{(\alpha,\eta,\xi)\in{\mathbf A}\times{\mathbf
X}^2:\,\eta\leq\xi\}$ and assume
$\lambda:=\overline{\nu}(A)\in(0,1)$. Since the coupling
defined by $\overline{L}$ is monotone, 
we have ${\bf
1}_A(\alpha_t,\eta_t,\xi_t)\geq {\bf 1}_A(\alpha_0,\eta_0,\xi_0)$.
By Lemma \ref{stable},
$$\overline{\nu}_A:=\overline{\nu}(d\alpha,d\eta,d\xi|(\alpha,\eta,\xi)\in A)\in\mathcal I_{\overline{L}}$$
From
$
\overline{\nu}=\lambda\overline{\nu}_A+(1-\lambda)\overline{\nu}_{A^c},
$
we deduce $\overline{\nu}_{A^c}\in\mathcal I_{\overline{L}}$. Since
$A$ is shift invariant in ${\mathbf A}\times{\mathbf X}^2$,
$\overline{\nu}_A$ and $\overline{\nu}_{A^c}$ lie in
$\overline{\mathcal{S}}$. By extremality of $\overline{\nu}$, we
must have $\overline{\nu}_A=\overline{\nu}_{A^c}$ which is
impossible since these measures are supported on disjoint sets.
\end{proof}
\mbox{}\\ \\
Attractiveness assumption \textit{(A5)} ensures that an initially
ordered pair of coupled configurations remains ordered at later
times. Assumptions \textit{(A1), (A4)} induce  a stronger property:
pairs of opposite  discrepancies  between two coupled configurations
eventually get killed, so that the two configurations become
ordered.
\begin{proposition}\label{prop_irred}
Every $\overline{\nu}\in{\mathcal I}_{\overline{L}}\cap\overline{\mathcal S}$ is supported on
$\overline{\mathcal O}$.
\end{proposition}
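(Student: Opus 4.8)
The plan is to show that under the coupled dynamics $\overline{L}$, any opposite pair of discrepancies between $\eta$ and $\xi$ is eventually annihilated, so that a translation-invariant stationary measure cannot give positive mass to the complement of $\overline{\mathcal O}$. By Lemma \ref{order_extremal} it suffices to treat the extremal case: let $\overline{\nu}\in\left(\mathcal I_{\overline{L}}\cap\overline{\mathcal S}\right)_e$, and suppose for contradiction that $\overline{\nu}(\overline{\mathcal O})=0$, i.e. $\overline{\nu}$-a.s. the configuration $(\eta,\xi)$ has both a site where $\eta>\xi$ and a site where $\eta<\xi$ (a $+$ and a $-$ discrepancy). The goal is to exhibit, with positive probability under the stationary coupled process started from $\overline{\nu}$, an event on which at time $1$ (say) the number of, say, $+$ discrepancies in a fixed large window has strictly decreased while no new discrepancies are created elsewhere — this contradicts stationarity of $\overline{\nu}$ via the conserved-quantity/monotonicity argument in the spirit of Lemma \ref{stable}.

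Concretely, first I would record the key structural fact about the basic coupling \eqref{xy-gen-coupl}: since $b$ is attractive (A5), along any single jump the coupled update $\mathcal T^{\alpha,x,v}$ never turns an ordered pair of coordinates into a disordered one; moreover a jump across a bond carrying a $+$ discrepancy and a $-$ discrepancy, when arranged appropriately, reduces the total number of discrepancies. Then, using the graphical construction on the Poisson space $(\Omega,\mathcal F,\Prob)$, I would localize: by ergodicity/shift-invariance and $\overline{\nu}(\overline{\mathcal O})=0$, with positive $\overline{\nu}$-probability there is a $+$ discrepancy at some site and a $-$ discrepancy at another site, both within a window $[-L,L]$ for $L$ large enough (this uses that a.s. discrepancies of both signs exist, hence with positive probability the nearest ones to the origin are within distance $L$). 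Conditionally on this configuration, irreducibility (A1) together with non-degeneracy (A4) — precisely $b(1,K-1)>0$, which guarantees the coupled rates in \eqref{xy-gen-coupl} are positive whenever a discrepancy sits at a bond with the relevant occupation — lets me prescribe a finite sequence of Poisson marks in $[0,1]\times[-L,L]\times\mathcal V$ that walks the $+$ discrepancy and the $-$ discrepancy toward each other along a path allowed by $p(\cdot)$ (using (A1) to connect the two sites) and annihilates them, while prescribing that no other Poisson mark falls in a slightly larger space-time window. This event has positive $\Prob$-probability, uniformly over the (compact) set of relevant local configurations and disorders, so the joint event has positive $\overline{\nu}\otimes\Prob$-probability.

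On that event, the total number of discrepancies in a large window $[-L',L']$ at time $1$ is strictly smaller than at time $0$, and by the monotone/no-creation property nowhere outside has a new discrepancy appeared; summing the local count of $+$ discrepancies against translation-invariance of $\overline{\nu}$ gives, in the manner of Lemma \ref{stable} applied to the non-negative conserved-type functional "density of $+$ discrepancies," a strict decrease in a stationary expectation — a contradiction. Hence $\overline{\nu}(\overline{\mathcal O})=1$ for extremal $\overline{\nu}$, and then for general $\overline{\nu}\in\mathcal I_{\overline{L}}\cap\overline{\mathcal S}$ by extremal decomposition. I expect the main obstacle to be the bookkeeping in the localization-and-surgery step: making precise, uniformly in $\alpha$ and in the local configuration, the finite explicit sequence of Poisson marks that transports and cancels a $\pm$ pair using only jumps in the support of $p$ and the positivity coming from (A4), and checking carefully that the "no other mark nearby" event together with the monotonicity of $\mathcal T^{\alpha,x,v}$ indeed forbids creation of compensating discrepancies elsewhere — this is where attractiveness (A5) and irreducibility (A1) must be combined delicately rather than invoked separately.
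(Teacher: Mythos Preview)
Your proposal follows a different route from the paper. The paper does \emph{not} use surgery on the Poisson construction; instead it carries out the classical Liggett--Cocozza generator computation. Namely, for $f_x(\eta,\xi)=(\eta(x)-\xi(x))^+$, stationarity gives $\int\overline{L}_\alpha f_0\,d\overline{\nu}=0$, and translation invariance of $\overline{\nu}$ (together with the shift property \eqref{commutation}) lets one rewrite this as $\sum_v\int\overline{L}^{0,v}_\alpha[f_0+f_v]\,d\overline{\nu}$, each term of which is $\le 0$ by the explicit form \eqref{xy-gen-coupl} and assumptions \textit{(A4)--(A5)}. Hence each term vanishes, yielding \eqref{opposite_discrepancies} for $p(y-x)+p(x-y)>0$; an induction on $n$ (the only place where a path argument appears) extends this to $p^{*n}$, and \textit{(A1)} concludes. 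No reduction to extremals or to Lemma \ref{order_extremal} is needed.

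Your surgical argument has a genuine gap at the step ``summing the local count of $+$ discrepancies against translation-invariance \ldots\ gives a strict decrease in a stationary expectation''. The surgery event you construct is \emph{local}: on it, the number of $+$ discrepancies in a fixed window $[-L',L']$ drops by one. But on its complement --- which has probability close to $1$ --- $+$ discrepancies can freely migrate into $[-L',L']$ across its boundary, so $\sum_{|x|\le L'}(\eta_1(x)-\xi_1(x))^+$ is \emph{not} dominated by its value at time $0$ pathwise, nor in expectation without further control. The quantity that is actually non-increasing is the formal total $\sum_{x\in\Z}(\eta_t(x)-\xi_t(x))^+$, which is infinite; passing from this to a strict decrease of the \emph{density} $\Exp_{\overline{\nu}}[(\eta(0)-\xi(0))^+]$ is exactly what the telescoping generator identity above accomplishes (it absorbs the boundary flux through translation invariance). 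Without that identity, a single localized annihilation event of positive probability does not contradict stationarity of a shift-invariant measure: you would need annihilations at positive spatial density, which your construction --- requiring ``no other Poisson mark in a slightly larger window'' --- cannot deliver, since such events at different centers are mutually exclusive.

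In short, the intuition ``attractiveness forbids creation of discrepancies, so any annihilation contradicts stationarity'' is correct only after one proves that $t\mapsto\Exp_{\overline{\nu}}[(\eta_t(0)-\xi_t(0))^+]$ is non-increasing, and the clean proof of that is the generator computation the paper performs. Your Step 2 (transporting discrepancies along $p$-paths using \textit{(A1)}, \textit{(A4)}) is essentially what the paper does in its induction step, but it only kicks in after the base case is established by the computation you omitted.
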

\begin{proof}{proposition}{prop_irred}
We  follow the scheme used in \cite{lig,andjel,coc,guiol,gs} for the non-disordered case,
 and  only sketch the arguments  needed for the disordered setting.
\\ \\
{\em Step 1.} For $x\in\Z$, let $f_x(\eta,\xi)=(\eta(x)-\xi(x))^+$.
By translation invariance of  $\overline{\nu}$,  the  shift  commutation
property \eqref{commutation} and \eqref{coupling_t},
\eqref{xy-gen-coupl},
\begin{eqnarray*}
0&=&\int\overline{L}_\alpha f_0 (\alpha,\eta,\xi)\overline{\nu}(d\alpha,d\eta,d\xi) \\
& = & \sum_{v\in\Z}
\int\overline{L}_\alpha^{0,v} [f_0+f_v] (\alpha,\eta,\xi)\overline{\nu}(d\alpha,d\eta,d\xi)
\end{eqnarray*}
On the other hand (see \cite{coc,gs})
\begin{eqnarray*}
\overline{L}_\alpha^{0,v}(f_0+f_v)& \leq & -p(v)\alpha(0)|b(\eta(0),\eta(v))-b(\xi(0),\xi(v))|\\
& & \times\left(
{\bf 1}_{\{\eta(0)>\xi(0),\,\eta(v)<\xi(v)\}}+{\bf 1}_{\{\eta(0)<\xi(0),\,\eta(v)>\xi(v)\}}
\right)
\end{eqnarray*}
Using Assumptions  \textit{(A4)--(A5)},  \eqref{alpha_bounds} and
translation invariance of $\overline{\nu}$, we obtain
\be\label{opposite_discrepancies}
\begin{array}{ll}
& \overline{\nu}\left(
(\alpha,\eta,\xi):\,\eta(x)>\xi(x),\,\eta(y)<\xi(y)
\right)\\
+ & \overline{\nu}\left(
(\alpha,\eta,\xi):\,\eta(x)<\xi(x),\,\eta(y)>\xi(y)
\right)=0
\end{array}
\ee
for $x\neq y$ with $p(y-x)+p(x-y)>0$. Whenever one of the events in
\eqref{opposite_discrepancies} holds, we say there is
\textit{a pair of opposite discrepancies} at  $(x,y)$.  \\ \\
{\em Step 2.} One proves by induction that, for all $n\in\N$,
\eqref{opposite_discrepancies} holds if $x\neq y$ with
$p^{*n}(y-x)+p^{*n}(x-y)>0$. The induction step is based on the
following idea. Assume $(\eta,\xi)$  has 
 a pair of opposite discrepancies at $(x,y)$. Then one can find
a finite path of coupled transitions (with rates uniformly bounded
below thanks to  \textit{(A4)--(A5)}  and \eqref{alpha_bounds}),
leading to a coupled state with a pair of opposite discrepancies,
either at $(x,z)$  for some $z$  with
$p^{*(n-1)}(z-x)+p^{*(n-1)}(x-z)>0$, or at $(z,y)$ with
$p^{*(n-1)}(y-z)+p^{*(n-1)}(z-y)>0$. This part of the argument is
insensitive to the presence of disorder so long as $\alpha(x)$ is uniformly bounded below.\\ \\
{\em Conclusion.} By irreducibility assumption \textit{(A1)},
\eqref{opposite_discrepancies} holds for all $(x,y)\in\Z^2$ with $x\neq y$.
This implies $\overline{\nu}(\overline{\mathcal O})=1$.
\end{proof}
\mbox{}\\ \\
We are now in a position to prove Proposition \ref{invariant}.\\
\begin{proof}{proposition}{invariant}
We define
$$\mathcal R^Q:=\left\{\int\eta(0)\nu(d\alpha,d\eta):\,
\nu\in\left(\mathcal I_{L}\cap\mathcal S\right)_e,\,\nu\mbox{ has
}\alpha\mbox{-marginal }Q\right\}$$
 Let $\nu^i\in\left(\mathcal I_{L}\cap\mathcal S\right)_e$  with $\alpha$-marginal $Q$ and
$\rho^i:=\int\eta(0)\nu^i(d\alpha,d\eta)\in{\mathcal R}^Q$ for
$i\in\{1,2\}$. Assume $\rho^1\leq\rho^2$.  Using Lemma
\ref{lemma_conditional},\textit{(iii)}, Lemmas
\ref{coupling_extremal} and \ref{order_extremal}, and Proposition
\ref{prop_irred},   we obtain  $\nu^1\ll\nu^2$,  that is
\eqref{ordered_measures}.
Existence \eqref{Rezakhanlou} of an asymptotic particle density can be
obtained by a proof analogous to \cite[Lemma 14]{mrs}, where the
space-time ergodic theorem
is applied to the joint disorder-particle process. Then,
closedness of ${\mathcal R}^Q$ is established as in \cite[Proposition 3.1]{bgrs2}.
We end up proving the weak continuity statement given the rest of
the proposition. Let $\rho,\rho'\in{\mathcal R}^Q$ with
$\rho\leq\rho'$. By \eqref{ordered_measures}  and Lemma
\ref{lemma_conditional}, there exists a coupling
$\overline{\nu}^{Q,\rho,\rho'}(d\alpha,d\eta,d\xi)$ of
$\nu^{Q,\rho}(d\alpha,d\eta)$ and  $\nu^{Q,\rho'}(d\alpha,d\xi)$ 
supported on $\overline{\mathcal O}_+$. Thus, for $x\in\Z$ 
\be\label{standard_coupling}\int|\eta(x)-\xi(x)|\overline{\nu}^{Q,\rho,\rho'}(d\alpha,d\eta,d\xi)=|\rho-\rho'|\ee
from which weak continuity follows by a coupling argument.
\end{proof}
\begin{remark}\label{remark_lipschitz}
Since 
\be\label{compare_fluxes}
G^Q(\rho)-G^Q(\rho')=\int [\widetilde{\jmath}(\alpha,\eta)-\widetilde{\jmath}(\alpha,\xi)]\overline{\nu}^{Q,\rho,\rho'}(d\alpha,d\eta,d\xi)
\ee
a Lipschitz constant $V$ for $G^Q$  follows 
from  \eqref{other_flux}, \eqref{standard_coupling}: 
\be\label{maxspeed}V=2c^{-1}||b||_\infty\sum_{z\in\Z}|z|p(z)\ee
\end{remark}
\section{ Proof of hydrodynamics}
\label{proof_hydro}
In this section, we prove the hydrodynamic limit following the
strategy   
introduced in \cite{bgrs1,bgrs2} and significantly strengthened in \cite{bgrs3}. That is, we reduce general
Cauchy data to step initial conditions (the so-called Riemann
problem)  and use a constructive approach (as in \cite{av}). 
Some technical details similar to \cite{bgrs3}
will be omitted. We shall rather focus  on how to deal with the disorder, which is the substantive
part of this paper.
The measure $Q$ being fixed once and for all by Theorem
\ref{th:hydro}, we simply write   $\nu^\rho$, $\mathcal
R$, $G$.  
\subsection{Riemann problem}\label{riemann}
Let $\lambda,\rho\in[0,K]$ with $\lambda<\rho$ (for $\lambda>\rho$ replace
infimum with supremum below), and
\begin{equation}\label{eq:rie}
R_{\lambda,\rho}(x,0)=\lambda \mathbf 1_{\{x<0\}}+\rho \mathbf
1_{\{x\geq 0\}}
\end{equation}
The entropy solution to  the conservation law  \eqref{hydrodynamics}
with initial condition \eqref{eq:rie}, denoted by
$R_{\lambda,\rho}(x,t)$, is given (\cite[Proposition 4.1]{bgrs2}) by
a variational formula, and satisfies
\begin{eqnarray}
\int_v^w R_{\lambda,\rho}(x,t)dx & = & t[\mathcal
G_{v/t}(\lambda,\rho)-\mathcal G_{w/t}(\lambda,\rho)],\mbox{
with}\cr \mathcal G_v(\lambda,\rho) & := & \inf\left\{
G(r)-vr:\,r\in[\lambda,\rho]\cap\mathcal R
\right\}\label{limiting_current}
\end{eqnarray}
for all $v,w\in\R$.
Microscopic  states with profile \eqref{eq:rie} will be constructed
using the following lemma, established in Subsection
\ref{technical}  below. 
\begin{lemma}\label{big_coupling}
There exist random variables $\alpha$ and
$(\eta^\rho:\,\rho\in\mathcal R)$ on a probability space
$(\Omega_{\mathbf A},\mathcal F_{\mathbf A},\Prob_{\mathbf A})$ such
that
\beq\label{marginals}(\alpha,\eta^\rho)\sim\nu^\rho,&& \alpha\sim
Q\\ \label{strassen} \Prob_{\mathbf A}-a.s.,&&
\rho\mapsto\eta^\rho\mbox{ is nondecreasing}\eeq
\end{lemma}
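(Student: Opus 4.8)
The plan is to build the family $(\eta^\rho:\rho\in\mathcal R)$ by a monotone coupling argument, first on a countable dense subset of $\mathcal R$ and then extending by monotone limits. First I would fix a countable set $D=\{\rho_k:k\in\N\}$ that is dense in $\mathcal R$ and contains $0$ and $K$. Using \eqref{ordered_measures} of Proposition \ref{invariant}, for each pair $\rho<\rho'$ in $D$ we have $\nu^{Q,\rho}\ll\nu^{Q,\rho'}$, so by Lemma \ref{lemma_conditional}\textit{(iii)} the measures $\nu^{Q,\rho}$ and $\nu^{Q,\rho'}$ admit a coupling supported on $\overline{\mathcal O}_+$; equivalently, since all $\nu^{Q,\rho}$ share the common $\alpha$-marginal $Q$, the conditional laws satisfy $\nu^{Q,\rho}(d\eta|\alpha)\le\nu^{Q,\rho'}(d\eta|\alpha)$ for $Q$-a.e.\ $\alpha$. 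I would then apply Strassen's theorem \emph{simultaneously} to the whole increasing family of conditional measures $(\nu^{Q,\rho_k}(d\eta|\alpha))_{k}$ for fixed $\alpha$. Concretely, the standard way to realize a monotone family indexed by a countable ordered set is to disintegrate repeatedly: enumerate $D$, and inductively construct a coupling of $\nu^{Q,\rho_1}(\cdot|\alpha),\dots,\nu^{Q,\rho_k}(\cdot|\alpha)$ supported on the ordered set $\{\eta^1\le\cdots\le\eta^k\}$, using Strassen at each step to insert the new coordinate between its neighbours (this uses that a monotone coupling of two laws, conditioned on one of them, still allows a monotone insertion — a routine application of Strassen to conditional laws). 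This yields, for $Q$-a.e.\ $\alpha$, a probability measure $\Theta_\alpha$ on $\mathbf X^{D}$ (the projective limit of the finite-dimensional monotone couplings, which is consistent by construction) supported on nondecreasing sequences, with $k$-th marginal $\nu^{Q,\rho_k}(\cdot|\alpha)$.

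Next I would package this into a single probability space: set $\Omega_{\mathbf A}=\mathbf A\times\mathbf X^{D}$ with $\Prob_{\mathbf A}(d\alpha,d(\eta^{\rho_k})_k)=Q(d\alpha)\,\Theta_\alpha(d(\eta^{\rho_k})_k)$, and let $\alpha$, $(\eta^{\rho_k})_k$ be the coordinate variables. One must check measurability of $\alpha\mapsto\Theta_\alpha$; this follows by constructing each finite-dimensional coupling measurably — Strassen's coupling can be chosen measurably in the underlying laws (standard measurable-selection argument, as is implicitly used already in the proof of Lemma \ref{lemma_conditional}\textit{(ii)}$\Rightarrow$\textit{(iii)}). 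By construction $(\alpha,\eta^{\rho_k})\sim\nu^{Q,\rho_k}$ for every $k$, $\alpha\sim Q$, and $\Prob_{\mathbf A}$-a.s.\ the map $\rho_k\mapsto\eta^{\rho_k}$ is nondecreasing on $D$. It remains to extend the family from $D$ to all of $\mathcal R$. For $\rho\in\mathcal R\setminus D$, pick $\rho_k\downarrow\rho$ in $D$ and define $\eta^\rho:=\inf_k\eta^{\rho_k}$ (coordinatewise infimum, which exists in the compact ordered space $\mathbf X$ and is a decreasing limit, hence attained); monotonicity on $D$ makes this independent of the chosen sequence and preserves $\rho\mapsto\eta^\rho$ nondecreasing on all of $\mathcal R$ after an a.s.\ redefinition. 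Finally, to check $(\alpha,\eta^\rho)\sim\nu^{Q,\rho}$ one uses the weak continuity of $\rho\mapsto\nu^{Q,\rho}$ asserted in Proposition \ref{invariant}: since $\eta^{\rho_k}\to\eta^\rho$ a.s.\ (monotone convergence in the product topology) and $(\alpha,\eta^{\rho_k})\sim\nu^{Q,\rho_k}\Rightarrow\nu^{Q,\rho}$ weakly, the a.s.\ limit has the right law.

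The main obstacle is carrying out the joint Strassen construction \emph{measurably in $\alpha$} and \emph{consistently across all of $D$ at once}, i.e.\ producing the kernel $\alpha\mapsto\Theta_\alpha$ rather than just pairwise couplings; once one has a measurable monotone coupling on the dense countable set, the passage to $\mathcal R$ is soft and relies only on compactness of $\mathbf X$ and the weak continuity already established. A minor point requiring care is the interchange of ``for a.e.\ $\alpha$'' with ``for all $k$'' (and later ``for all $\rho$''), handled exactly as in the proof of Lemma \ref{lemma_conditional} by the countability of $D$ and a final density/continuity argument.
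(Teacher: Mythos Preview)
Your approach is essentially the paper's: construct the coupling on a countable dense subset of $\mathcal R$ via Strassen-type arguments applied to the conditional laws $\nu^{Q,\rho}(\cdot\,|\,\alpha)$, package it over $Q$, then extend to all of $\mathcal R$ by monotone limits and invoke the weak continuity of $\rho\mapsto\nu^{Q,\rho}$ for the marginals. Two differences are worth flagging. First, where you outline an inductive Strassen insertion (and rightly identify the joint measurability in $\alpha$ and consistency across $D$ as the delicate point), the paper simply invokes Theorem~6 of Kamae--Krengel \cite{kk}, which delivers in one stroke a monotone coupling of the entire countable ordered family $(\nu^{Q,\rho}_\alpha:\rho\in\mathcal R_d)$ on $\mathbf X$ and so bypasses the projective-limit bookkeeping. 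Second, your extension step defines $\eta^\rho$ only via $\rho_k\downarrow\rho$ in $D$; this fails when $\rho\in\mathcal R\setminus D$ is right-isolated in $\mathcal R$ (so no sequence in $D\subset\mathcal R$ decreases to $\rho$). The paper splits $\mathcal R\setminus\mathcal R_d$ into points approachable from below and from above and uses the appropriate one-sided limit in each case. With that easy fix your argument is complete and matches the paper's.
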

Let $\overline{\nu}^{\lambda,\rho}$ denote the distribution of
$(\alpha,\eta^\lambda,\eta^\rho)$, and
$\overline{\nu}^{\lambda,\rho}_\alpha$ the conditional distribution of
$(\alpha,\eta^\lambda,\eta^\rho)$ given $\alpha$.
 For $(x_0,t_0)\in\Z\times\R^+$, 
the {\sl space-time shift} $\theta_{x_0,t_0}$
is defined
for any $\omega\in\Omega$, for any $(t,x,z,u)\in\R^+\times\Z\times\Z\times[0,1]$, by
\[
(t,x,z,u)\in\theta_{x_0,t_0}\omega\mbox{ if and only if
}(t_0+t,x_0+x,z,u)\in\omega\]
%
By its definition and  property \eqref{shift_t}, the mapping introduced in \eqref{unique_mapping}
satisfies,  for all
 $s,t\geq 0$, $x\in\Z$ and $(\alpha,\eta,\omega)\in{{\mathbf A}}\times{\mathbf
X}\times{\Omega}$:
\begin{eqnarray*} \label{mapping_markov}
\eta_s(\alpha,\eta_t(\alpha,\eta,\omega),\theta_{0,t}\omega)&=&\eta_{t+s}(\alpha,\eta,\omega)
\\ \label{mapping_shift}
\tau_x\eta_t(\alpha,\eta,\omega)&=&\eta_t(\tau_x\alpha,\tau_x\eta,\theta_{x,0}\omega)
\end{eqnarray*}
We now  introduce an extended shift $\theta'$ on $\Omega'={\mathbf
A}\times{\mathbf X}^2\times\Omega$. If
$\omega'=(\alpha,\eta,\xi,\omega)$
%
denotes a generic element of $\Omega'$, we set
\be\label{extended_shift}\theta'_{x,t}\omega' =
(\tau_x\alpha,\tau_x\eta_t(\alpha,\eta,\omega),\tau_x\eta_t(\alpha,\xi,\omega),\theta_{x,t}\omega)
\ee
 It is important to note that this shift incorporates disorder.
Let $T:{\mathbf X}^2\to{\mathbf X}$ be given by
\be\label{transfo_T}
T(\eta,\xi)(x)=\eta(x){\bf 1}_{\{x< 0\}}+\xi(x){\bf 1}_{\{x\geq
0\}} \ee
The  main result of this  subsection is
\begin{proposition}
\label{corollary_2_2}  Set, for $t\ge 0$,
\be \label{def_empirical_shift}
\beta^N_t(\omega')(dx):=\pi^N(\eta_t(\alpha,T(\eta,\xi),\omega))(dx)
\ee
For all $t>0$, $s_0\geq 0$ and $x_0\in\R$, we have that, for
$Q$-a.e. $\alpha\in{\mathbf A}$,
\[
\lim_{N\to\infty}\beta^N_{Nt}(\theta'_{\lfloor Nx_0\rfloor ,Ns_0}\omega')(dx)=R_{\lambda,\rho}(.,t)dx,
\quad\overline{\nu}_\alpha^{\lambda,\rho}\otimes\Prob\mbox{-a.s.}
\]
\end{proposition}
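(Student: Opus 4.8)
The plan is to follow the constructive approach of \cite{bgrs3}, adapted to carry the disorder inside the space-time shift $\theta'$. First I would observe that by the Markov property of the mapping \eqref{unique_mapping} (the first displayed identity after \eqref{extended_shift}) and the definition \eqref{extended_shift} of $\theta'$, applying the shift $\theta'_{\lfloor Nx_0\rfloor, Ns_0}$ reduces the problem to studying $\pi^N(\eta_{Nt}(\tau_{\lfloor Nx_0\rfloor}\alpha, T(\eta',\xi'),\omega))$ where $(\eta',\xi')$ is the pair obtained from $(\eta^\lambda,\eta^\rho)$ after running the coupled dynamics for time $Ns_0$ and then translating by $\lfloor Nx_0\rfloor$. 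Since $\tau_{\lfloor Nx_0\rfloor}\alpha$ is again distributed according to $Q$ (shift-invariance), and since the entropy solution $R_{\lambda,\rho}$ is self-similar so that $R_{\lambda,\rho}(\cdot,t)$ at macroscopic location $x_0$ and time $s_0$ relates to the Riemann profile via the variational formula \eqref{limiting_current}, the statement for general $(x_0,s_0)$ follows from the case $x_0=0$, $s_0=0$ once one knows the convergence holds for $Q$-a.e.\ $\alpha$ (using a countable dense set of $(x_0,s_0)$ and monotonicity/continuity in these parameters to upgrade to all of them, plus the fact that a countable intersection of $Q$-full sets is $Q$-full). So the heart of the matter is: for $Q$-a.e.\ $\alpha$, $\overline{\nu}^{\lambda,\rho}_\alpha\otimes\Prob$-a.s., $\pi^N(\eta_{Nt}(\alpha,T(\eta^\lambda,\eta^\rho),\omega))\to R_{\lambda,\rho}(\cdot,t)\,dx$.

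For the upper and lower bounds on the empirical measure I would use attractiveness \eqref{attractive_1} together with the extremal invariant measures $\nu^{Q,\rho'}$. The point is that $T(\eta^\lambda,\eta^\rho)$ agrees with $\eta^\lambda$ on $\{x<0\}$ and with $\eta^\rho$ on $\{x\geq 0\}$; by the monotone coupling of Lemma \ref{big_coupling} one can sandwich $T(\eta^\lambda,\eta^\rho)$ between translates of configurations distributed according to $\nu^{Q,\rho'}$ for $\rho'\in\{\lambda,\rho\}$, and by \eqref{attractive_1} this ordering is preserved by the dynamics. Following \cite{bgrs3}, the key quantity to control is the current across a macroscopic characteristic line: one shows, using the microscopic flux $j(\alpha,\eta)$ (or the equivalent $\widetilde{\jmath}$ of Remark \ref{remark_flux}), the conservation law at the microscopic level $L_\alpha[\eta(1)] = j(\alpha,\eta)-j(\tau_1\alpha,\tau_1\eta)$, and the fact that under the invariant measure $\nu^{Q,\rho'}_\alpha$ the current has expectation $G(\rho')$ by Corollary \ref{corollary_invariant}(i)(B3). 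The space-time ergodic theorem applied to the joint disorder-particle process (as in the cited \cite[Lemma 14]{mrs}) then yields the required law-of-large-numbers control of space-time averages of the current, for $Q$-a.e.\ $\alpha$, with the limiting flux given by $G$; combined with the variational formula \eqref{limiting_current} and a subadditivity/Riemann-solution argument exactly as in \cite{bgrs2,bgrs3}, this identifies the limit as $R_{\lambda,\rho}(\cdot,t)$.

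The main obstacle — and the part genuinely new relative to the non-disordered setting — is making the ergodic averaging work \emph{jointly} in $\alpha$ and in the particle configuration, so that the resulting null set of bad $\alpha$'s is $Q$-negligible uniformly over the (countably many) space-time base points, and so that the limiting current is the deterministic constant $G(\rho')$ rather than an $\alpha$-dependent quantity. This is exactly where Corollary \ref{corollary_invariant}(B3) (constancy of $G^Q_\alpha(\rho)$ in $\alpha$) and the shift-commutation \eqref{commutation}, \eqref{commutation_inv} are used: they guarantee that averaging the current $j(\tau_k\alpha,\tau_k\eta)$ over a block, starting from $\nu^{Q,\rho'}$, converges to $G(\rho')$ by ergodicity of $Q$ and of the joint process, with no residual dependence on the disorder realization. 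Once this ergodic input is secured, the remaining steps — interpolation between the two constant boundary densities, passage from the Riemann profile to the assertion at $(x_0,t)$ via self-similarity, and the upgrade from a.s.-for-fixed-$(x_0,s_0)$ to a.s.-simultaneously — are routine adaptations of \cite{bgrs3} and I would only sketch them.
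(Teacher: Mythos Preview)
Your reduction to the case $(x_0,s_0)=(0,0)$ has a genuine gap. After applying $\theta'_{\lfloor Nx_0\rfloor,Ns_0}$, the disorder component becomes $\tau_{\lfloor Nx_0\rfloor}\alpha$, which \emph{depends on $N$}. The quenched statement you want is: for $Q$-a.e.\ fixed $\alpha$, the limit as $N\to\infty$ holds. Knowing the unshifted result for $Q$-a.e.\ $\alpha$ tells you nothing about the limit along the $N$-dependent sequence of environments $(\tau_{\lfloor Nx_0\rfloor}\alpha)_N$: each term lies in the good set, but the good-set statement concerns a \emph{fixed} environment, not one that moves with $N$. Shift-invariance of $Q$ only gives equality in law, not the pathwise control you need, and a countable-dense-set argument in $(x_0,s_0)$ does not repair this. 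The paper circumvents the difficulty by keeping the shift inside the current law of large numbers (Proposition \ref{proposition_2_2}) and proving it via a large-deviation bound (Lemma \ref{deviation_empirical}) that is \emph{uniform over all initial distributions}; the $\sup_\nu$ in \eqref{ld} is precisely what absorbs the unknown, $N$-dependent initial state produced by $\theta'_{\lfloor Nx_0\rfloor,Ns_0}$.

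A second gap is your appeal to the space-time ergodic theorem for the current. The process started from $T(\eta^\lambda,\eta^\rho)$ (or its shifted version) is not stationary, so an ergodic theorem does not directly control space-time averages of $j$. The paper's route is: a martingale decomposition reduces the current to a space-time empirical average of $j(\alpha,\eta)-v\eta$; the uniform large-deviation bound plus Borel--Cantelli forces the space-time empirical measure \eqref{def_empirical_2} onto $\mathcal I_L\cap\mathcal S$; attractiveness then pins it in $\mathcal M_{\lambda,\rho}$ (Lemma \ref{lemma_empirical}), which together with Corollary \ref{corollary_invariant} gives the $\liminf$ bound \eqref{liminf_better}. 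The matching $\limsup$ \eqref{limsup} is obtained not by ergodicity but by the current monotonicity of Lemma \ref{current_comparison}, comparing with the stationary process at each $r\in[\lambda,\rho]\cap\mathcal R$. Your sketch omits both the uniform large-deviation input and the current-comparison step, and neither is supplied by a plain ergodic theorem.
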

 Proposition \ref{corollary_2_2} will follow from a law of large
numbers for currents.
Let $x_.=(x_t,\,t\geq 0)$ be a $\Z$-valued {\em cadlag} random path,
with $\abs{x_t-x_{t^-}}\leq 1$, independent of the Poisson measure
$\omega$.
We define the particle current seen by an observer travelling along
this path by
\be \label{current_3} \varphi^{x_.}_t(\alpha,\eta_0,\omega)
=\varphi^{x_.,+}_t(\alpha,\eta_0,\omega)
-\varphi^{x_.,-}_t(\alpha,\eta_0,\omega)+\widetilde{\varphi}^{x_.}_t(\alpha,\eta_0,\omega)
\ee
where
$\varphi^{x_.,\pm}_t(\alpha,\eta_0,\omega)$ count the number of
rightward/leftward crossings  of $x_.$ due to particle
jumps, and
$\widetilde{\varphi}^{x_.}_t(\alpha,\eta_0,\omega)$ is the current due
to the self-motion of the observer.
We shall  write  $\varphi^v_t$ in the particular case $x_t=\lfloor
vt\rfloor$.
Set $\phi^{v}_t(\omega'):=\varphi^{v}_t(\alpha,T(\eta,\xi),\omega)$.
Note that  for $(v,w)\in\R^2$,
$\beta^N_{Nt}(\omega')([v,w])=t(Nt)^{-1}(\phi^{v/t}_{Nt}(\omega')-\phi^{w/t}_{Nt}(\omega'))$.
By \eqref{limiting_current},  Proposition \ref{corollary_2_2} is
reduced to  
\begin{proposition}
\label{proposition_2_2}
For all $t>0$, $a\in\R^+,b\in\R$ and  $v\in\R$,
\be\lim_{N\to\infty}(Nt)^{-1}\phi^{v}_{Nt}(\theta'_{\lfloor b N\rfloor ,a N}\omega') =
\mathcal G_v(\lambda,\rho)\qquad\overline{\nu}^{\lambda,\rho}\otimes\Prob-a.s. \label{as}\ee
\end{proposition}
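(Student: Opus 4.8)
The plan is to reduce the law of large numbers for the shifted current to a law of large numbers for the current of the joint disorder-particle process started from a translation-invariant measure, and then invoke the ergodic theory of the joint process together with the constructive/variational ingredients already set up. First I would unwind the extended shift $\theta'_{\lfloor bN\rfloor,aN}$: by its definition \eqref{extended_shift} and the Markov/shift-covariance identities for the mapping $\eta_t$ recorded just before \eqref{extended_shift}, applying $\theta'_{\lfloor bN\rfloor,aN}$ to $\omega'=(\alpha,\eta^\lambda,\eta^\rho,\omega)$ amounts to restarting the dynamics from the pair $(\tau_{\lfloor bN\rfloor}\eta^\lambda_{aN},\tau_{\lfloor bN\rfloor}\eta^\rho_{aN})$ in environment $\tau_{\lfloor bN\rfloor}\alpha$ under the time-and-space shifted Poisson measure. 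Since $\overline\nu^{\lambda,\rho}$ is a shift-invariant measure on $\mathbf A\times\mathbf X^2$ that is also stationary under the joint dynamics restricted to $\overline{\mathcal O}_+$ (it couples the invariant measures $\nu^\lambda\ll\nu^\rho$ of Proposition \ref{invariant}), the law of $(\tau_{\lfloor bN\rfloor}\alpha,\tau_{\lfloor bN\rfloor}\eta^\lambda_{aN},\tau_{\lfloor bN\rfloor}\eta^\rho_{aN})$ under $\overline\nu^{\lambda,\rho}\otimes\Prob$ is again $\overline\nu^{\lambda,\rho}$. So the shift is, in distribution, harmless, and \eqref{as} follows once we establish
\[
\lim_{N\to\infty}(Nt)^{-1}\varphi^{v}_{Nt}(\alpha,T(\eta^\lambda,\eta^\rho),\omega)=\mathcal G_v(\lambda,\rho)
\]
for $Q$-a.e.\ $\alpha$, $\overline\nu^{\lambda,\rho}_\alpha\otimes\Prob$-a.s. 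A small subtlety here is that the shifted path $x_t=\lfloor bN+vt\rfloor-\lfloor bN\rfloor$ differs from $\lfloor vt\rfloor$ by a bounded amount, contributing only $O(1)$ to the current, hence nothing after dividing by $Nt$.

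Next I would split the current along the observer path $x_t=\lfloor vt\rfloor$ into the flux through the fixed site $0$ plus a boundary correction. The standard identity (as in \cite{bgrs2,bgrs3}) writes $\varphi^v_{Nt}(\alpha,\eta_0,\omega)$ as the flux $\varphi^0_{Nt}$ through site $0$ minus the number of particles that the moving frame overtakes, which is controlled by $\sum_{0\le y<vNt}[\eta_{Nt}(y)-\eta_0(y)]$ type sums (with the obvious modification for $v<0$). Dividing by $Nt$: the term $(Nt)^{-1}\varphi^0_{Nt}$ converges, by the ergodic theorem for the stationary joint process under $\overline\nu^{\lambda,\rho}$, to $\Exp_{\overline\nu^{\lambda,\rho}}[j(\alpha,T(\eta^\lambda,\eta^\rho))]$ — but since $\overline\nu^{\lambda,\rho}$ is supported on $\overline{\mathcal O}_+$, the glued configuration $T(\eta^\lambda,\eta^\rho)$ has macroscopic density profile $R_{\lambda,\rho}(\cdot,0)$, and one must identify the limit of the current through $0$ with $\mathcal G_v(\lambda,\rho)$. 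This is exactly the content of the constructive/variational analysis of the Riemann problem in \cite{bgrs2,bgrs3}: the microscopic current through a characteristic of speed $v$ in a profile interpolating between densities $\lambda$ and $\rho$ equals $\mathcal G_v(\lambda,\rho)=\inf\{G(r)-vr:\ r\in[\lambda,\rho]\cap\mathcal R\}$, because the entropy solution $R_{\lambda,\rho}$ satisfies \eqref{limiting_current}.

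The structure of the argument, therefore, is: (1) transfer of the shift via stationarity of $\overline\nu^{\lambda,\rho}$; (2) current-decomposition identity; (3) a sub-/super-additivity or attractiveness-based comparison to sandwich $(Nt)^{-1}\phi^v_{Nt}$ between currents of \emph{homogeneous-density} profiles with densities in $\mathcal R$, together with the law of large numbers for the current of a translation-invariant process in the stationary regime (Proposition \ref{invariant}, \eqref{densite-rho}, \eqref{Rezakhanlou}), which gives a \emph{constant} limiting current equal to $G^Q$ evaluated at the relevant density; (4) optimization over the sandwiching densities to produce the infimum defining $\mathcal G_v$, exactly as in the non-disordered case. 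Here the key new point compared to \cite{bgrs3} is that the ergodic averaging must be performed with respect to the joint process on $\mathbf A\times\mathbf X^2$, using the ergodicity of $Q\in\mathcal S^{\mathbf A}_e$ and the extremality results (Lemma \ref{order_extremal}, Proposition \ref{prop_irred}, Lemma \ref{coupling_extremal}) to know that the extremal stationary coupled measures are ordered and carry well-defined densities; the uniform bounds \eqref{alpha_bounds} on the disorder ensure all the crossing-rate estimates in \cite{bgrs3} carry over with constants depending only on $c$ and $\|b\|_\infty$.

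I expect the main obstacle to be step (3)–(4): controlling the current of the \emph{glued}, inhomogeneous configuration $T(\eta^\lambda,\eta^\rho)$ rather than a configuration distributed according to a single $\nu^\rho$. This requires the finite-propagation/attractiveness machinery of \cite{bgrs3} — comparing the glued process to shifted homogeneous processes to localize the relevant density near the observer, then using the variational formula \eqref{limiting_current} for $R_{\lambda,\rho}$ to match the microscopic optimization with the macroscopic one. The disorder enters only through the need to work with $\overline\nu^{\lambda,\rho}_\alpha$ for $Q$-a.e.\ fixed $\alpha$, which is handled by conditioning (as in the proof of Lemma \ref{lemma_conditional} and Corollary \ref{corollary_invariant}) and by the fact that, after the ergodic averaging, the limiting current does not depend on $\alpha$ — precisely property \eqref{flux}/(B3). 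The third-moment assumption on $p(\cdot)$ is what is needed to make the crossing-count error terms negligible uniformly in $N$, as in \cite{bgrs3}.
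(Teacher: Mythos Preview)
There is a genuine gap in your step~(1). You argue that because $\overline{\nu}^{\lambda,\rho}$ is shift- and dynamically invariant, the law of the shifted triple $(\tau_{\lfloor bN\rfloor}\alpha,\tau_{\lfloor bN\rfloor}\eta^\lambda_{aN},\tau_{\lfloor bN\rfloor}\eta^\rho_{aN})$ is again $\overline{\nu}^{\lambda,\rho}$, so ``the shift is, in distribution, harmless'' and it suffices to prove the a.s.\ limit for the unshifted current. This inference fails on two counts. First, the coupling $\overline{\nu}^{\lambda,\rho}$ produced in Lemma~\ref{big_coupling} is not shown to be invariant for the coupled generator $\overline{L}$ (only its marginals $\nu^\lambda,\nu^\rho$ are $L$-invariant), nor even shift-invariant. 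Second, and more fundamentally, the shift $\theta'_{\lfloor bN\rfloor,aN}$ depends on $N$: even if each $\theta'_{\lfloor bN\rfloor,aN}$ were measure-preserving, almost sure convergence of $(Nt)^{-1}\phi^v_{Nt}(\omega')$ would only yield convergence \emph{in distribution} of $(Nt)^{-1}\phi^v_{Nt}(\theta'_{\lfloor bN\rfloor,aN}\omega')$, not almost sure convergence of the sequence. The paper addresses this by \emph{not} trying to remove the shift: it works with the shifted current directly, replaces the deterministic path $\lfloor vt\rfloor$ by a rate-$|v|$ Poisson walk, uses a martingale decomposition so that $(Nt)^{-1}\phi^v_{Nt}$ is close to $\int[j-v\eta]\,dm_{tN,\eps}$ for the space-time empirical measure $m_{tN,\eps}$, and then invokes a large-deviation bound (Lemma~\ref{deviation_empirical}) that is \emph{uniform over initial distributions}. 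This uniformity is exactly what absorbs the unknown state after the $N$-dependent shift and yields, via Borel--Cantelli and Lemma~\ref{lemma_empirical}, that all subsequential limits of $m_{tN,\eps}$ lie in $\mathcal M_{\lambda,\rho}$.

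A related problem undermines your steps (2)--(3): the glued configuration $T(\eta^\lambda,\eta^\rho)$ is not distributed according to any stationary measure (it has different densities to the left and right of the origin), so the ergodic theorem cannot be applied to $(Nt)^{-1}\varphi^0_{Nt}(\alpha,T(\eta^\lambda,\eta^\rho),\omega)$. Your sandwich idea in (3)--(4) is on the right track for the \emph{upper} bound --- indeed the paper's Step~two does precisely this, using the current monotonicity of Lemma~\ref{current_comparison} to compare with the homogeneous initial data $(\eta^r,\eta^r)$ for $r\in[\lambda,\rho]\cap\mathcal R$ and then minimizing over $r$. But the \emph{lower} bound $\liminf\geq\mathcal G_v(\lambda,\rho)$ cannot be obtained by such a comparison (the inequality goes the wrong way); it genuinely requires the empirical-measure argument and the characterization $\mathcal M_{\lambda,\rho}=\{\int\nu^r\gamma(dr):\gamma\in\mathcal P([\lambda,\rho]\cap\mathcal R)\}$ from \eqref{setofmeasures}, which forces $\int[j-v\eta]\,d\mu\geq\inf_{r}[G(r)-vr]$ for every limit point $\mu$.
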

To prove Proposition \ref{proposition_2_2}, we introduce a
probability space $\Omega^+$, whose generic element is denoted by
$\omega^+$, on which is defined a Poisson process
$(N_t(\omega^+))_{t\ge 0}$ with intensity $\abs{v}$ ($v\in\R$).
Denote by ${\Prob}^+$ the associated probability. Set
\begin{eqnarray}\label{def_poisson}
x_s^N(\omega^+)&:=&({\rm sgn}(v))\left[N_{a N+s}(\omega^+)-N_{aN}(\omega^+)\right]\\
\label{mapping_tilde}\widetilde{\eta}^N_s(\alpha,\eta_0,\omega,\omega^+)
&:=&\tau_{x_s^N(\omega^+)}\eta_s(\alpha,\eta_0,\omega)\\
\label{mapping_tilde_alpha}
\widetilde{\alpha}^N_s(\alpha,\omega^+) & := & \tau_{x^N_s(\omega^+)}\alpha
\end{eqnarray}
Thus  $(\widetilde{\alpha}_s^N,\widetilde{\eta}_s^N)_{s\ge 0}$ is a
Feller process with generator
\[
 L^v=L+S^v,\quad S^v f(\alpha,\zeta)=\abs{v}
[f(\tau_{{\rm sgn}(v)}\alpha,\tau_{{\rm
sgn}(v)}\zeta)-f(\alpha,\zeta)] \]
 for $f$ local and $\alpha\in\mathbf A$, $\zeta\in{\mathbf X}$.
Since any translation invariant measure on ${\mathbf
A}\times{\mathbf X}$ is stationary for the pure shift generator
$S^v$, we have
${\mathcal I}_L\cap{\mathcal S}={\mathcal
I}_{L^v}\cap{\mathcal S}$.
Define the time and space-time empirical measures (where $\eps>0$) by
\begin{eqnarray} \label{def_empirical} m_{tN}(\omega',\omega^+)&:=&(Nt)^{-1}\int_0^{tN}
\delta_{(\widetilde{\alpha}^N_s(\alpha,\omega^+),\widetilde{\eta}^N_s(\alpha,T(\eta,\xi),\omega,\omega^+))}ds\\
\label{def_empirical_2}
m_{tN,\eps}(\omega',\omega^+)&:=&\abs{\Z\cap[-\eps N,\eps
N]}^{-1}\sum_{x\in\Z:\,\abs{x}\leq \eps
N}\tau_xm_{tN}(\omega',\omega^+) 
\end{eqnarray}
Notice  that there is a disorder component we cannot omit in the empirical measure, 
although ultimately  we are only interested
in the behavior of the $\eta$-component.
Let ${\mathcal M}_{\lambda,\rho}$ denote
the compact set   of probability measures $\mu(d\alpha,d\eta)\in \mathcal I_{L}\cap\mathcal S$ such that
$\mu$ has $\alpha$-marginal $Q$, and
$\nu^\lambda\ll\mu\ll\nu^\rho$. By Proposition \ref{invariant},
\be\label{setofmeasures}
\mathcal M_{\lambda,\rho}=\left\{\nu(d\alpha,d\eta)
=\int\nu^r(d\alpha,d\eta)\gamma(dr):\,\gamma\in\mathcal P([\lambda,\rho]\cap\mathcal R)\right\}
\ee
The key ingredients for Proposition \ref{proposition_2_2} are the
following lemmas, proved in Subsection \ref{technical}  below.
\begin{lemma}\label{current_comparison}
The function $\phi^v_t(\alpha,\eta,\xi,\omega)$ is increasing in $\eta$, decreasing in $\xi$.
\end{lemma}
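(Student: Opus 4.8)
The plan is to reduce both monotonicity statements to attractiveness (the graphical grand coupling behind \eqref{attractive_0}--\eqref{attractive_1}) through the bookkeeping identity that expresses the current past the observer as the change of mass on one side of the observer's path. Fix $\alpha$ and $\omega$, write $\zeta=T(\eta,\xi)$, $\zeta_t=\eta_t(\alpha,\zeta,\omega)$, and $x_t=\lfloor vt\rfloor$ (so $x_0=0$). The current defined in \eqref{current_3} satisfies
\[
\varphi^v_t(\alpha,\zeta,\omega)=\sum_{y\geq x_t}\zeta_t(y)-\sum_{y\geq 0}\zeta_0(y)=\sum_{y<0}\zeta_0(y)-\sum_{y<x_t}\zeta_t(y),
\]
the self-motion term $\widetilde{\varphi}$ in \eqref{current_3} being exactly what converts the change of mass in a fixed half-line into the change of mass relative to the \emph{moving} observer (this is the current formalism of \cite{bgrs3}, and assumption \textit{(A2)} guarantees $\varphi^v_t$ is a.s.\ finite over finite times). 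The two displayed expressions are the ``right'' and ``left'' forms of the same quantity, equal by conservation of particles.

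The key point I would exploit is that $T$ places the $\eta$-data on $\{y<0\}$ and the $\xi$-data on $\{y\geq 0\}$, so that increasing $\eta$ alone (resp.\ $\xi$ alone) alters $\zeta_0$ only on $\{y<0\}$ (resp.\ only on $\{y\geq 0\}$). This is precisely what freezes one of the two terms above and produces the two \emph{opposite} signs. I would stress that $\varphi^v_t$ is \emph{not} monotone in the full configuration (increasing $\zeta_0$ on the right raises both terms of the right form), so the naive reasoning ``$T$ is monotone and currents are monotone'' is invalid; the asymmetric roles of $\eta$ and $\xi$ under $T$ are essential.

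Concretely, for $\eta$-monotonicity I would take $\eta\leq\eta'$ and set $\zeta=T(\eta,\xi)\leq T(\eta',\xi)=\zeta'$, which agree on $\{y\geq 0\}$. Driving both by the same $\omega$, \eqref{attractive_1} gives $\zeta_t\leq\zeta'_t$ for all $t$, and the right form yields
\[
\phi^v_t(\alpha,\eta',\xi,\omega)-\phi^v_t(\alpha,\eta,\xi,\omega)=\sum_{y\geq x_t}[\zeta'_t(y)-\zeta_t(y)]-\sum_{y\geq 0}[\zeta'_0(y)-\zeta_0(y)],
\]
where the second sum vanishes and the first is a sum of nonnegative terms; hence $\phi^v_t$ is increasing in $\eta$. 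For $\xi$-monotonicity I would take $\xi\leq\xi'$, set $\zeta=T(\eta,\xi)\leq T(\eta,\xi')=\zeta'$ (now agreeing on $\{y<0\}$), couple by the same $\omega$, and use the left form to get
\[
\phi^v_t(\alpha,\eta,\xi',\omega)-\phi^v_t(\alpha,\eta,\xi,\omega)=\sum_{y<0}[\zeta'_0(y)-\zeta_0(y)]-\sum_{y<x_t}[\zeta'_t(y)-\zeta_t(y)],
\]
whose first sum vanishes while the second is nonnegative; hence $\phi^v_t$ is decreasing in $\xi$.

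The main obstacle is the rigorous justification of the half-line identity and of these sums, since for general $\eta,\eta'$ the number of discrepancies may be infinite. I would first establish the inequalities when $\eta'-\eta$ (resp.\ $\xi'-\xi$) has finite support, where every sum is finite and the identity is elementary bookkeeping over the a.s.\ finitely many relevant crossings, and then pass to the general case by monotone approximation, using the $\overline{\nu}^{\lambda,\rho}\otimes\Prob$-a.s.\ finiteness of the currents. The only structural input beyond the definition of $T$ is attractiveness \eqref{attractive_0}--\eqref{attractive_1} (equivalently the monotone coupling \eqref{coupling_t}); no algebraic property of $b$ beyond \textit{(A5)} is needed.
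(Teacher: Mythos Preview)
Your approach is essentially the same as the paper's: couple $T(\eta,\xi)$ and $T(\eta',\xi)$ via the common Poisson noise, use attractiveness \eqref{attractive_0}--\eqref{attractive_1} to keep the evolutions ordered, and read off the sign from the mass-balance identity expressing the current as the change of mass on one side of the observer. The paper writes this in one line, $\phi^v_t(\alpha,\eta',\xi,\omega)-\phi^v_t(\alpha,\eta,\xi,\omega)=\sum_{x>vt}[\gamma'_t(x)-\gamma_t(x)]\geq 0$, and leaves the $\xi$-case as symmetric; your extra care about the half-line identity (working with the \emph{difference} of two currents so that the time-zero contribution cancels and only a nonnegative series remains) is a legitimate clarification but not a different method.
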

\begin{lemma}\label{lemma_empirical}
With
$\overline{\nu}^{\lambda,\rho}\otimes{\Prob}\otimes{\Prob}^+$-probability
one, every subsequential limit as $N\to\infty$ of
$m_{tN,\eps }(\theta'_{\lfloor b N\rfloor ,a
N}\omega',\omega^+)$ lies in $\mathcal M_{\lambda,\rho}$.
\end{lemma}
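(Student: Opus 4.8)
\textbf{Plan of proof for Lemma \ref{lemma_empirical}.}

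The strategy is to show that every subsequential limit $\mu$ of the space-time empirical measures $m_{tN,\eps}(\theta'_{\lfloor bN\rfloor,aN}\omega',\omega^+)$ is (i) translation invariant, (ii) invariant for $L$ (equivalently for $L^v$), (iii) has $\alpha$-marginal $Q$, and (iv) satisfies $\nu^\lambda\ll\mu\ll\nu^\rho$. Together with \eqref{setofmeasures} this forces $\mu\in\mathcal M_{\lambda,\rho}$. First I would fix a subsequence along which $m_{tN,\eps}$ converges to some $\mu$; relative compactness is automatic since all the measures live on the compact space $\mathbf A\times\mathbf X$ and, by construction, the $\eta$-marginals are dominated by $\delta_K^{\otimes\Z}$.

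\emph{Translation invariance and $L$-invariance.} These are the standard ``empirical measure'' arguments. Translation invariance in the $\eps N$-window: since $m_{tN,\eps}$ is an average of $\tau_x m_{tN}$ over $|x|\le\eps N$, applying a fixed shift $\tau_y$ changes the average only through $O(1/(\eps N))$ boundary terms, which vanish as $N\to\infty$; hence $\mu=\tau_y\mu$ for every $y$, so $\mu\in\mathcal S$. Stationarity: for a local function $f$ on $\mathbf A\times\mathbf X$, $(Nt)^{-1}\int_0^{tN}[L^v f](\widetilde\alpha^N_s,\widetilde\eta^N_s)\,ds = (Nt)^{-1}\big(M_{tN}^f + f(\widetilde\alpha^N_{tN},\widetilde\eta^N_{tN}) - f(\widetilde\alpha^N_0,\widetilde\eta^N_0)\big)$ where $M^f$ is the Dynkin martingale; the boundary terms are $O(1/N)$ and the martingale term is $O(N^{-1/2})$ in $L^2$ by the bounded jump rates, so $\int L^v f\,d\mu=0$ a.s., giving $\mu\in\mathcal I_{L^v}\cap\mathcal S=\mathcal I_L\cap\mathcal S$.

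\emph{The $\alpha$-marginal and the domination bounds — the substantive part.} The $\alpha$-marginal of $m_{tN}$ is exactly the empirical measure of $\tau_{x^N_s}\alpha$ over $s\in[0,tN]$; since the extended shift $\theta'$ acts on the disorder component by the deterministic spatial shift $\tau_{\lfloor bN\rfloor}$ and the path increments are bounded, the $\alpha$-marginal of $m_{tN,\eps}(\theta'_{\lfloor bN\rfloor,aN}\omega',\omega^+)$ is an average of $\tau_x\alpha$ over a window of size $O(\eps N + tN)$ around $\lfloor bN\rfloor$, which converges $Q$-a.s. to $Q$ by the ergodic theorem (here ergodicity of $Q$ is used). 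For the domination: by Lemma \ref{current_comparison} — or rather its configuration-level analogue, monotonicity of $T(\eta,\xi)$ and of the dynamics \eqref{attractive_1} together with \eqref{attractive_0} — and the choice of initial data $\eta^\lambda\le T(\eta^\lambda,\eta^\rho)\le\eta^\rho$ coupled through Lemma \ref{big_coupling}, one has $\eta_s(\alpha,\eta^\lambda,\omega)\le\eta_s(\alpha,T(\eta,\xi),\omega)\le\eta_s(\alpha,\eta^\rho,\omega)$ for all $s$, hence the same ordering after the translations $\tau_{x^N_s}$. Averaging this ordering over space and time, and testing against a nondecreasing local $f(\alpha,\cdot)$, one deduces that every subsequential limit $\mu$ satisfies, in the sense of $\ll$ of Lemma \ref{lemma_conditional}, the bound $\mu^\lambda_\infty\ll\mu\ll\mu^\rho_\infty$ where $\mu^\lambda_\infty,\mu^\rho_\infty$ are the corresponding limits started from $\eta^\lambda,\eta^\rho$. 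The extra input needed is that these extreme limits are exactly $\nu^\lambda$ and $\nu^\rho$: this is where one invokes that $\eta^\rho\sim\nu^\rho$ with $\nu^\rho$ \emph{invariant} (property (B1)/Proposition \ref{invariant}) and translation invariant with density $\rho$ — so the time-space average of $\delta_{(\tau_{x^N_s}\alpha,\tau_{x^N_s}\eta_s(\alpha,\eta^\rho,\omega))}$ converges to $\nu^\rho$ by the space-time ergodic theorem applied as in \cite[Lemma 14]{mrs}, using \eqref{Rezakhanlou} and \eqref{commutation_inv}. Thus $\nu^\lambda\ll\mu\ll\nu^\rho$.

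\emph{Main obstacle.} The routine martingale and boundary estimates are not the issue; the delicate point is handling the \emph{disorder component} consistently under the extended shift $\theta'$ — one must check that $\theta'$ translates $\alpha$ by a macroscopic amount $\lfloor bN\rfloor$ while the observer path $x^N_s$ adds only $O(N_s)=O(aN)$ further shift, so that the relevant $\alpha$-window is still asymptotically governed by the ergodic theorem for $Q$, and that the ``$Q$-a.e. $\alpha$'' in the statement survives this space-time averaging. Equally, one must be careful that the limiting measure $\mu$ genuinely has $\alpha$-marginal $Q$ and not some other shift of it; this is guaranteed because $m_{tN}$ averages $\tau_{x^N_s}\alpha$ and, by the ergodic theorem, the limit of such spatial averages is $Q$ regardless of the (sublinear) random starting shift. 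Once these disorder bookkeeping issues are settled, closing the argument is a matter of combining Lemma \ref{lemma_conditional}, Proposition \ref{invariant}, and \eqref{setofmeasures}.
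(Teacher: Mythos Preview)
Your outline for translation invariance and for the domination $\nu^\lambda\ll\mu\ll\nu^\rho$ matches the paper's argument: spatial averaging over the $\eps N$-window gives $\mu\in\mathcal S$, and the sandwich $\eta^\lambda\le T(\eta^\lambda,\eta^\rho)\le\eta^\rho$, propagated by attractiveness and compared to the equilibrium processes via space-time ergodicity, gives the $\ll$-bounds (and hence, by Lemma \ref{lemma_conditional}, the $\alpha$-marginal condition for free, so your separate step (iii) is redundant).

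The genuine gap is in your argument for $L^v$-invariance. You invoke the Dynkin decomposition and assert that ``the martingale term is $O(N^{-1/2})$ in $L^2$ \ldots\ so $\int L^v f\,d\mu=0$ a.s.'' An $L^2$ bound of order $N^{-1/2}$ gives, via Chebyshev, a probability bound of order $N^{-1}$, which is \emph{not summable} in $N$. Borel--Cantelli therefore does not apply, and you only obtain $\int L^v f\,dm_{tN,\eps}\to 0$ in probability. That is insufficient for the statement ``with probability one, \emph{every} subsequential limit lies in $\mathcal M_{\lambda,\rho}$'': convergence in probability leaves room for bad subsequences on which $\int L^v f\,dm_{tN_k,\eps}$ stays away from zero.

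This is exactly why the paper introduces Lemma \ref{deviation_empirical}: a Donsker--Varadhan type large-deviation upper bound, with rate functional $\mathcal D_v$ vanishing precisely on $\mathcal I_{L^v}$, that is stated \emph{uniformly} over initial distributions $\nu$. Uniformity is essential because the extended shift $\theta'_{\lfloor bN\rfloor,aN}$ makes the effective initial law of the process $N$-dependent and unknown. The exponential decay in \eqref{ld} then yields summable bounds, Borel--Cantelli applies, and one concludes that a.s.\ every subsequential limit lies in $\mathcal D_v^{-1}(0)=\mathcal I_{L^v}$. Your martingale route can be repaired by passing to exponential martingales (as the paper itself does later for $R^{\eps,v}_{tN}$), which would give the needed Poissonian tail bound uniform in the initial state; but the $L^2$ estimate as written does not close the argument.
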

\begin{proof}{Proposition}{proposition_2_2}
We will show that
\beq \liminf_{N\to\infty}\,(Nt)^{-1}\phi^{v}_{tN}\circ\theta'_{\lfloor b N\rfloor ,a
N}(\omega') & \geq & \mathcal G_v(\lambda,\rho),\quad
\overline{\nu}^{\lambda,\rho}\otimes\Prob\mbox{-a.s.}\label{liminf_better}\\
\label{limsup}
\limsup_{N\to\infty}\,(Nt)^{-1}\phi^{v}_{tN}\circ\theta'_{\lfloor b N\rfloor ,a
N}(\omega') & \leq & \mathcal G_v(\lambda,\rho),\quad \overline{\nu}^{\lambda,\rho}\otimes\Prob\mbox{-a.s.}
\eeq
{\em Step one: proof of \eqref{liminf_better}.}\par\noindent
  Setting
$\varpi_{a N}=\varpi_{a
N}(\omega'):=T\left( \tau_{\lfloor b N\rfloor }\eta_{a
N}(\alpha,\eta,\omega), \tau_{\lfloor b N\rfloor }\eta_{a N}(\alpha,\xi,\omega)
 \right)$,
we have
\be \label{sothat} (Nt)^{-1}\phi^v_{tN}\circ\theta'_{\lfloor b N\rfloor ,a
N}(\omega')=(Nt)^{-1}\varphi^{v}_{tN}(\tau_{\lfloor b
N\rfloor }\alpha,\varpi_{a N},\theta_{\lfloor b
N\rfloor ,a N}\omega) \ee
Let,   for every $(\alpha,\zeta,\omega,\omega^+)\in{\mathbf
A}\times{\mathbf X}\times\Omega\times\Omega^+$ and $x^N_.(\omega^+)$
given by \eqref{def_poisson},
\be\label{psi-t-v}
\psi_{tN}^{v,\eps}(\alpha,\zeta,\omega,\omega^+):=\abs{\Z\cap[-\eps
N,\eps N]}^{-1}\sum_{y\in\Z:\,\abs{y}\leq\eps
N}\varphi^{x^N_.(\omega^+)+y}_{tN}(\alpha,\zeta,\omega) \ee
Note that  $\lim_{N\to\infty}(Nt)^{-1}x_{tN}^N(\omega^+)= v$,  $\Prob^+$-a.s., and that for
two paths $y_.,z_.$  (see \eqref{current_3}),
$$
\abs{\varphi^{y_.}_{tN}(\alpha,\eta_0,\omega)-\varphi^{z_.}_{tN}(\alpha,\eta_0,\omega)} \leq
K\left(\abs{y_{tN}-z_{tN}}+ \abs{y_0-z_0}\right)
$$
Hence the proof of \eqref{liminf_better} reduces to that of the same
inequality where we replace $(Nt)^{-1}\phi^{v}_{tN}\circ\theta'_{\lfloor b
N\rfloor ,a N}(\omega')$ by
$ (Nt)^{-1}\psi^{v,\eps}_{tN}(\tau_{\lfloor b N\rfloor }\alpha,\varpi_{a
N},\theta_{\lfloor b N\rfloor ,a N}\omega,\omega^+)$ 
and $\overline{\nu}^{\lambda,\rho}\otimes\Prob$ by
$\overline{\nu}^{\lambda,\rho}\otimes\Prob\otimes\Prob^+$.
By definitions \eqref{def_f}, \eqref{current_3} of flux and current,
for any $\alpha\in\mathbf{A}$, $\zeta\in\mathbf{X}$,
\begin{eqnarray*} &&M^{x,v}_{tN}(\alpha,\zeta,\omega,\omega^+):=
\varphi^{x^N_.(\omega^+)+x}_{tN}(\alpha,\zeta,\omega)-\nonumber\\
&& \int_0^{tN} \tau_x\left\{
j(\widetilde{\alpha}^N_{s}(\alpha,\omega^+),\widetilde{\eta}^N_{s}(\alpha,\zeta,\omega,\omega^+))
-v (\widetilde{\eta}^N_{s}(\alpha,\zeta,\omega,\omega^+))({\bf
1}_{\{v>0\}})\right\}ds\label{martingale}
\end{eqnarray*}
is a mean $0$ martingale under $\Prob\otimes\Prob^+$.  Let
\beq\nonumber R_{tN}^{\eps,v}&:=&\label{as_2} \left(Nt\abs{\Z\cap[-\eps
N,\eps N]}\right)^{-1}\sum_{x\in\Z:\,\abs{x}\leq\eps N}M^{x,v}_{tN}(
\tau_{\lfloor b N\rfloor }\alpha,\varpi_{a N},\theta_{\lfloor b
N\rfloor ,a N}\omega,\omega^+)\\\nonumber
&=&(Nt)^{-1}\psi^{v,\eps}_{tN}(\tau_{\lfloor b N\rfloor }\alpha,\varpi_{a
N},\theta_{\lfloor b N\rfloor ,a N}\omega,\omega^+)\\
&&-\int [j(\alpha,\eta)-v\eta({\bf
1}_{\{v>0\}})]m_{tN,\eps}(\theta'_{\lfloor b N\rfloor ,a
N}\omega',\omega^+)(d\alpha,d\eta)
\label{replace_oncemore} \eeq
where the last equality comes from \eqref{def_empirical_2},
\eqref{psi-t-v}.
 The exponential martingale associated with $M^{x,v}_{tN}$ 
yields a Poissonian bound, uniform in $(\alpha,\zeta)$, for the
exponential moment of $M_{tN}^{x,v}$ with respect to
$\Prob\otimes\Prob^+$. Since $\varpi_{a N}$ is independent of
$(\theta_{\lfloor b N\rfloor ,a N}\omega,\omega^+)$ under
$\overline{\nu}^{\lambda,\rho}\otimes\Prob\otimes\Prob^+$, the bound
is also valid under this measure, and Borel-Cantelli's lemma implies
$\lim_{N\to\infty}R_{tN}^{\eps,v}=0$. 
{}From \eqref{replace_oncemore}, Lemma \ref{lemma_empirical} and
Corollary \ref{corollary_invariant}, \textit{(ii)} imply
\eqref{liminf_better}, as well as
\be \limsup_{N\to\infty}\,(Nt)^{-1}\phi^{v}_{tN}\circ\theta'_{\lfloor b N\rfloor ,a
N}(\omega')  \leq  \sup_{r\in[\lambda,\rho]\cap{\mathcal R}}
[G(r)-v r],\quad
\overline{\nu}^{\lambda,\rho}\otimes\Prob\mbox{-a.s.}\label{liminf_better_2}\ee
{\em Step two: proof of \eqref{limsup}.}
Let $r\in[\lambda,\rho]\cap{\mathcal R}$. We define
$\overline{\nu}^{\lambda,r,\rho}$ as the distribution of
$(\alpha,\eta^\lambda,\eta^r,\eta^\rho)$.
By \eqref{liminf_better} and \eqref{liminf_better_2}, 
\[
\lim_{N\to\infty}\,(Nt)^{-1}\phi^{v}_{tN}\circ\theta'_{\lfloor b N\rfloor ,a
N}(\alpha,\eta^r,\eta^r,\omega)  =  G(r)-vr\]
By Lemma \ref{current_comparison},
\[
\phi^v_{tN}\circ\theta'_{\lfloor bN\rfloor ,aN}(\omega')\leq\phi^v_{tN}\circ\theta'_{\lfloor bN\rfloor ,aN}(\alpha,\eta^r,\eta^r,\omega)\]
 The result follows by continuity of $G$ and minimizing over $r$.
\end{proof}
\subsection{Cauchy problem}
\label{Cauchy}
For two measures $\mu,\nu\in{\mathcal M}^+(\R)$ with compact
support, we define
\be\label{def_delta}\Delta(\mu,\nu):=\sup_{x\in\R}\abs{
\nu((-\infty,x])-\mu((-\infty,x])}\ee
 which satisfies: \emph{(P1)}
 For a sequence  $(\mu_n)_{n\ge 0}$  of measures with uniformly bounded support,
 $\mu_n\to \mu$ vaguely is equivalent to
$\lim_{n\to\infty}\Delta(\mu_n,\mu)=0$;
\emph{(P2)} the macroscopic stability property   (\cite{bm, mrs})
states that $\Delta$ is, with high probability, an ``almost''
nonincreasing function of two coupled particle systems; \emph{(P3)}
correspondingly, there is $\Delta$-stability  for
\eqref{hydrodynamics}, that is,  $\Delta$ is a nonincreasing
function along two entropy solutions (\cite[Proposition 4.1,
\textit{iii), b)}]{bgrs3}).  
\begin{proposition}\label{hydro_finite}
Assume $(\eta^N_0)$ is a sequence of configurations such that:
(i) there exists $C>0$  such that  for all $N\in\N$, $\eta^N_0$ is
supported on $\Z\cap[-CN,CN]$;
(ii) $\pi^N(\eta^N_0)\to u_0(.)dx$ as $N\to\infty$, where $u_0$
 has compact support,  is a.e. ${\mathcal R}$-valued and
has finite space variation.
Let $u(.,t)$ denote the unique entropy solution to \eqref{hydrodynamics} with Cauchy datum $u_0(.)$.
Then,  $Q\otimes\Prob$-a.s. as $N\to\infty$,
\[
\Delta^N(t):=\Delta(\pi^N(\eta^N_{Nt}(\alpha,\eta^N_0,\omega)),u(.,t)dx)
\]
converges uniformly to $0$ on $[0,T]$ for every $T>0$.
\end{proposition}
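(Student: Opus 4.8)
\textbf{Proof plan for Proposition \ref{hydro_finite}.}

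The plan is to reduce the Cauchy problem to the Riemann problem already solved in Proposition \ref{corollary_2_2}, using the approximation scheme of \cite{bgrs1,bgrs2,bgrs3} together with the $\Delta$-stability properties (P1)--(P3). First I would approximate the initial profile $u_0$ by a piecewise constant profile $u_0^\delta$ taking finitely many values in $\mathcal R$ on finitely many intervals, chosen so that $\Delta(u_0^\delta(.)dx,u_0(.)dx)$ is small; here the finite space variation of $u_0$ and the closedness of $\mathcal R$ (from Proposition \ref{invariant}) are what make such an $\mathcal R$-valued approximation possible. On the microscopic side I would correspondingly build, on an enlarged probability space, a configuration $\eta^{N,\delta}_0$ whose empirical measure is close to $u_0^\delta(.)dx$ and which is coupled to $\eta^N_0$; the natural device is the random variables $(\alpha,\eta^\rho:\rho\in\mathcal R)$ of Lemma \ref{big_coupling}, patched together on the successive intervals via the transfer map $T$ of \eqref{transfo_T}, so that locally around each macroscopic point the configuration looks like $\nu^{\rho_i}$.

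Next, on each interval where $u_0^\delta$ is constant the local evolution is, after the space-time shift $\theta'_{\lfloor Nx_0\rfloor,Ns_0}$, exactly of the Riemann type handled by Proposition \ref{corollary_2_2} (with $\lambda=\rho$, i.e. a constant datum, or with a genuine jump at interval endpoints). I would invoke Proposition \ref{corollary_2_2} at each of the finitely many jump points of $u_0^\delta$ and at intermediate points, and then glue the resulting local limits: by finite propagation speed (the Lipschitz bound \eqref{maxspeed} for $G$, and the analogous microscopic bound $|x_t-x_{t^-}|\leq 1$ on currents) the entropy solution started from $u_0^\delta$ coincides, on any fixed bounded space-time region and for small enough time, with a concatenation of Riemann solutions; iterating over short time steps via the Markov property \eqref{mapping_markov} propagates this to all of $[0,T]$. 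This yields $Q\otimes\Prob$-a.s. convergence of $\pi^N(\eta^{N,\delta}_{Nt})$ to $u^\delta(.,t)dx$, the entropy solution with datum $u_0^\delta$, uniformly on $[0,T]$, in the sense that $\Delta(\pi^N(\eta^{N,\delta}_{Nt}),u^\delta(.,t)dx)\to 0$.

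Finally I would remove the approximation. Macroscopic stability (P2) controls $\Delta(\pi^N(\eta^N_{Nt}),\pi^N(\eta^{N,\delta}_{Nt}))$ uniformly in $t$ by (essentially) its initial value plus a vanishing error, and $\Delta$-stability (P3) for the conservation law controls $\Delta(u^\delta(.,t)dx,u(.,t)dx)$ by $\Delta(u_0^\delta(.)dx,u_0(.)dx)$; a triangle inequality for $\Delta$ then gives $\limsup_N \sup_{[0,T]}\Delta^N(t)\leq$ (something tending to $0$ as $\delta\to 0$), and letting $\delta\to 0$ finishes the proof. The main obstacle is the gluing step: one must ensure that the microscopic Riemann building blocks, which Proposition \ref{corollary_2_2} describes only after a deterministic space-time shift and under the specific coupling measures $\overline{\nu}^{\lambda,\rho}_\alpha$, can be assembled into a single coupling whose empirical measure is uniformly (in $t\in[0,T]$ and $x$) close to the piecewise-Riemann macroscopic solution --- this is where the disorder genuinely complicates the argument of \cite{bgrs3}, since the extended shift $\theta'$ of \eqref{extended_shift} carries the frozen environment along and one must check the relevant independence/stationarity under $Q$ at each patch. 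Here the quenched-to-$Q$-a.s. passage relies, as in Corollary \ref{corollary_invariant}, on ergodicity of $Q$.
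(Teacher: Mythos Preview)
The overall architecture---approximate by step data, invoke the Riemann result, control errors via macroscopic stability (P2) and $\Delta$-stability (P3)---matches the paper. But there is a structural gap in how you organize the time axis. You approximate only $u_0$ by a step profile $u_0^\delta$, build a single auxiliary process $\eta^{N,\delta}$, and then assert that ``iterating over short time steps via the Markov property propagates this to all of $[0,T]$''. This is where the argument breaks: after the first short interval, $u^\delta(.,\eps')$ is no longer piecewise constant (rarefaction fans produce continuous profiles), and $\eta^{N,\delta}_{N\eps'}$ is no longer of the form $T(\eta^\lambda,\eta^\rho)$ with $(\alpha,\eta^\lambda,\eta^\rho)\sim\overline{\nu}^{\lambda,\rho}$. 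Hence Proposition~\ref{corollary_2_2} cannot be reapplied, and the Markov property alone gives you nothing to iterate with. Proving hydrodynamics for $\eta^{N,\delta}$ on all of $[0,T]$ is not easier than the original statement; it is the original statement restricted to step initial data, so your reduction is circular.

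The paper's remedy is to discretize time first, $t_k=k\eps'$ with $\eps'=\eps/(2V)$, and at \emph{each} step re-approximate the macroscopic solution $u(.,t_k)$ (not just $u_0$) by a fresh step function $v_k$, then build from Lemma~\ref{big_coupling} a fresh microscopic block configuration $\xi^{N,k}$ at time $t_k$. One controls the increment $\Delta^N(t_{k+1})-\Delta^N(t_k)$ via a triangle inequality into three pieces: (i) $\Delta\bigl(\pi^N(\eta^N_{Nt_{k+1}}),\pi^N(\xi^{N,k}_{N\eps'})\bigr)-\Delta^N(t_k)$, bounded by macroscopic stability (P2) applied on $[t_k,t_{k+1}]$ plus the approximation error $\Delta(u(.,t_k)dx,v_kdx)$; (ii) $\Delta\bigl(\pi^N(\xi^{N,k}_{N\eps'}),v_k(.,\eps')dx\bigr)$, which vanishes by finite propagation together with Proposition~\ref{corollary_2_2} applied to each Riemann block of $v_k$; and (iii) $\Delta(v_k(.,\eps')dx,u(.,t_{k+1})dx)$, bounded by (P3). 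Each increment is $O(\delta\eps)$, there are $O(1/\eps)$ steps, and the gaps between the $t_k$'s are filled by a modulus-of-continuity estimate for $\Delta^N$. Note also that the issue you single out as the ``main obstacle'' (gluing under the disorder-carrying shift $\theta'$) is already absorbed in the statement of Proposition~\ref{corollary_2_2}, which allows an arbitrary space-time shift $\theta'_{\lfloor Nx_0\rfloor,Ns_0}$; no extra quenched-to-$Q$-a.s.\ argument is required at this stage.
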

Theorem \ref{th:hydro} follows  for general initial data $u_0$
by coupling and approximation arguments (see  \cite[Section 4.2.2]{bgrs3}). \\ \\
\begin{proof}{proposition}{hydro_finite}
By initial assumption \eqref{initial_profile_vague},
$\lim_{N\to\infty}\Delta^N(0)=0$.
Let $\eps>0$, and  $\eps'=\eps/(2V)$, for  $V$  given by
\eqref{maxspeed}. Set $t_k=k\eps'$ for $k\le \kappa:=\lfloor
T/\eps'\rfloor $, $t_{\kappa+1}=T$.  Since the number of steps is
proportional to $\eps$, if we want to bound the total error, the
main step is to prove
\be\label{wearegoing_1}
\limsup_{N\to\infty}\sup_{k=0,\ldots,{\mathcal
K}-1}\left[\Delta^N(t_{k+1})-\Delta^N(t_k)\right]\leq
3\delta\eps,\quad Q\otimes\Prob\mbox{-a.s.} \ee
where  $\delta:=\delta(\eps)$  goes to 0 as $\eps$ goes to 0;
the gaps between discrete times are filled by an  
  estimate for the time
modulus of continuity of $\Delta^N(t)$ (see  \cite[Lemma 4.5]{bgrs3}).  \\ \\
{\em Proof of \eqref{wearegoing_1}}. Since $u(.,t_k)$ has locally
finite variation, by \cite[Lemma 4.2]{bgrs3}, for all $\eps>0$
 we can find  
 functions
\be \label{decomp_approx} v_k=\sum_{l=0}^{l_k}r_{k,l}{\bf
1}_{[x_{k,l},x_{k,l+1})} \ee
with
$ -\infty=x_{k,0}<x_{k,1}<\ldots<x_{k,l_k}<x_{k,l_k+1}=+\infty$,
$r_{k,l}\in{\mathcal R}$, $r_{k,0}=r_{k,l_k}=0$,
such that $x_{k,l}-x_{k,l-1}\geq \eps$, and 
\be\label{uniform_approx}
\Delta(u(.,t_k)dx,v_kdx)  \leq  \delta\eps
\ee
 For $t_k\leq t< t_{k+1}$, we denote by $v_k(.,t)$ the entropy
solution to \eqref{hydrodynamics} at time $t$ with Cauchy datum
$v_k(.)$.
 The configuration $\xi^{N,k}$
defined on $(\Omega_{\mathbf A}\otimes\Omega,\mathcal F_{\mathbf
A}\otimes\mathcal F,\Prob_{\mathbf A}\otimes\Prob)$ (see Lemma
\ref{big_coupling}) by
\[
\xi^{N,k}(\omega_{\mathbf
A},\omega)(x):=\eta_{Nt_k}(\alpha(\omega_{\mathbf
A}),\eta^{r_{k,l}}(\omega_{\mathbf A}), \omega)(x),\, \mbox{ if }\,
\lfloor Nx_{k,l}\rfloor \leq x<\lfloor Nx_{k,l+1}\rfloor  \]
is a microscopic version of $v_k(.)$, since by Proposition
\ref{corollary_2_2} with $\lambda=\rho=r^{k,l}$,
\be \label{profile_xi}
\lim_{N\to\infty}\pi^N(\xi^{N,k}(\omega_{\mathbf A},\omega))(dx)
=v_k(.)dx,\quad\Prob_{\mathbf A}\otimes\Prob\mbox{-a.s.} \ee
 We denote by
$\xi^{N,k}_t(\omega_{\mathbf A},\omega)  =
\eta_{t}(\alpha(\omega_{\mathbf A}),\xi^{N,k}(\omega_{\mathbf
A},\omega), \theta_{0,Nt_k}\omega)$
evolution starting  from $\xi^{N,k}$.  
By triangle inequality,
\beq
\Delta^N(t_{k+1})-\Delta^N(t_k) & \leq & \Delta\left[ \pi^N(
\eta^N_{Nt_{k+1}} ),\pi^N( \xi^{N,k}_{N\eps'} )
\right]-\Delta^N(t_k)\label{decomp_delta_1}\\
& + & \Delta\left[ \pi^N( \xi^{N,k}_{N\eps'}
),v_k(.,\eps')dx \right]\label{decomp_delta_2}\\
& + & \Delta(v_k(.,\eps')dx,u(.,t_{k+1})dx)\label{decomp_delta_3}
\eeq
 To conclude, we rely on Properties \emph{(P1)--(P3)} of
$\Delta$:  Since  $\eps'=\eps/(2V)$,  finite propagation property
for \eqref{hydrodynamics} and for the particle system  (see
\cite[Proposition 4.1, \emph{iii), a)} and Lemma 4.3]{bgrs3}) and
Proposition \ref{corollary_2_2} imply
$$
\lim_{N\to\infty}\pi^N(
\xi^{N,k}_{N\eps'}(\omega_{\mathbf A},\omega)
)=v_k(.,\eps')dx,\qquad \Prob_A\otimes\Prob\mbox{-a.s.}
$$
Hence, the term \eqref{decomp_delta_2} converges a.s. to $0$ as
$N\to\infty$. By $\Delta$-stability for \eqref{hydrodynamics}, the
term \eqref{decomp_delta_3}  is bounded by 
$\Delta(v_k(.)dx,u(.,t_k)dx)\leq\delta\eps$.
We now consider the term \eqref{decomp_delta_1}.
By macroscopic stability  (\cite[Theorem 2, Equation (4) and Remark
1]{mrs}), outside probability $e^{-CN\delta\eps}$,
\be\label{macrostablast}\Delta\left[ \pi^N(
\eta^N_{Nt_{k+1}}),\pi^N( \xi^{N,k}_{N\eps'} ) \right]\leq
\Delta\left[ \pi^N( \eta^N_{Nt_{k}} ),\pi^N( \xi^{N,k} )
\right]+\delta\eps
\ee
Thus the event \eqref{macrostablast} holds a.s. for $N$ large
enough.
By triangle inequality,
\begin{eqnarray*}
&&\Delta\left[ \pi^N( \eta^N_{Nt_{k}} ),\pi^N( \xi^{N,k} ) \right] -
\Delta^N(t_k) \\
&\leq&\Delta\left(u(.,t_k)dx,v_k(.)dx\right)+\Delta\left[v_k(.)dx,\pi^N(\xi^{N,k})\right]
\end{eqnarray*}
for which \eqref{uniform_approx}, \eqref{profile_xi} yield  as
$N\to\infty$ an upper bound $2\delta\varepsilon$, hence
$3\delta\varepsilon$ for the term \eqref{decomp_delta_1}.
\end{proof}
\subsection{Proofs of lemmas}
\label{technical}
\begin{proof}{lemma}{big_coupling}
Let $\mathcal R_d$ be a countable dense subset of $\mathcal R$ that
contains all the isolated points of $\mathcal R$. We denote by
$\mathcal R_d^+$, resp. $\mathcal R_d^-$, the set of $\rho\in[0,K]$
that lie in the closure of $[0,\rho)\cap\mathcal R_d$, resp.
$(\rho,K]\cap\mathcal R_d$.
 Because $\mathcal R$ is closed, we have
${\mathcal R}={\mathcal R_d}\cup{\mathcal R_d^+}\cup{\mathcal
R_d^-}$.
By \eqref{ordered_measures} there exists a subset ${\mathbf A}'$ of
${\mathbf A}$ with $Q$-probability $1$, such that
$\nu^\rho_\alpha\leq\nu^{\rho'}_\alpha$ for all $\alpha\in{\mathbf
A}'$ and $\rho,\rho'\in{\mathcal R}_d$.
By \cite[Theorem 6]{kk}, for every $\alpha\in{\mathbf A}'$, there
exists a family of random variables
$(\eta^\rho_\alpha:\,\rho\in{\mathcal R}_d)$ on a probability space
$(\Omega_\alpha,\mathcal F_\alpha,\Prob_\alpha)$, such that
\eqref{marginals}--\eqref{strassen} hold for $\rho\in{\mathcal
R}_d$.
Let $\Omega_{\mathbf A}=\{(\alpha,\omega_\alpha):\,\alpha\in{\mathbf
A'},\,\omega_\alpha\in\Omega_\alpha\}$, $\mathcal F_{\mathbf A}$ be
the $\sigma$-field generated by mappings
$(\alpha,\omega_\alpha)\mapsto\eta^\rho(\alpha,\omega_\alpha):=\eta_\alpha^\rho(\omega_\alpha)$
for $\rho\in{\mathcal R}_d$, and $\Prob_{\mathbf
A}(d\alpha,d\omega_\alpha)=Q(d\alpha)\otimes\Prob_\alpha(d\omega_\alpha)$.
Now consider $\rho\in{\mathcal R}\setminus{\mathcal R}_d$. Since
$\eta^r_\alpha$ is a nondecreasing function of $r$, for every
$\alpha\in{\mathbf A}'$ and $\omega_\alpha\in\Omega_\alpha$,
$\eta^{\rho+}(\alpha,\omega_\alpha):=\lim_{r\to\rho,r<\rho,r\in{\mathcal
R}_d}\eta^r_\alpha(\omega_\alpha)$ exists if $\rho\in{\mathcal
R}_d^+$, and
$\eta^{\rho-}(\alpha,\omega_\alpha):=\lim_{r\to\rho,r>\rho,r\in{\mathcal
R}_d}\eta^r_\alpha(\omega_\alpha)$ exists if $\rho\in{\mathcal
R}_d^-$.
We set
$\eta^\rho(\alpha,\omega_\alpha)=\eta^{\rho+}(\alpha,\omega_\alpha)$
if $\rho\in{\mathcal R}_d^+$,
$\eta^\rho(\alpha,\omega_\alpha)=\eta^{\rho-}(\alpha,\omega_\alpha)$
otherwise. Suppose for instance $\rho\in\mathcal R_d^+$. Since
$\eta^{\rho+}$ is a $\Prob_{\mathbf A}$-a.s. limit of  $\eta^r$ as
$r\to\rho$, $r<\rho$, $r\in\mathcal R_d$, it is a limit in
distribution. Weak continuity of $\nu^\rho$ then implies
\eqref{marginals}. Property \eqref{strassen} on $\mathcal R$ follows
from the property on $\mathcal R_d$ and definitions of
$\eta^{\rho\pm}$.
\end{proof}
\mbox{}\\ \\
To prove Lemma \ref{lemma_empirical}, we need the following  uniform
upper bound (proved in \cite[Lemma 3.4]{bgrs3}).
 \begin{lemma}\label{deviation_empirical}
Let ${\bf P}_\nu^v$ denote the law of a Markov process $(\widetilde\alpha_.,\widetilde\xi_.)$ with
generator $L^v$ and initial distribution $\nu$. For $\eps>0$, let
\be \label{def_empirical-gen}
\pi_{t,\eps}:=\abs{\Z\cap[-\eps t,\eps
t]}^{-1}\sum_{x\in\Z\cap[-\eps t,\eps t]}t^{-1}\int_0^t\delta_{(\tau_x\widetilde\alpha_s,\tau_x\widetilde\xi_s)}ds
\ee
Then, there exists a functional ${\mathcal D}_v$ which is
nonnegative, l.s.c., and satisfies
${\mathcal D}_v^{-1}(0)={\mathcal I}_{L^v}$, such that,
for every closed subset $F$ of ${\mathcal P}({\bf A}\times\bf
X)$,
\be \label{ld} \limsup_{t\to\infty}t^{-1}\log\sup_{\nu\in\mathcal P({\bf
A}\times{\mathbf X})}{\bf
P}_\nu^v\left(\pi_{t,\eps}(\widetilde\xi_.)\in F\right)\leq
-\inf_{\mu\in F}{\mathcal D}_v(\mu) \ee
\end{lemma}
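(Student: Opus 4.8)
The plan is to prove Lemma \ref{deviation_empirical}, the uniform large deviation upper bound for the space-time empirical measure of the Markov process with generator $L^v=L+S^v$.

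First I would recall that $L^v$ is a translation-invariant Feller generator on the compact space $\mathbf A\times\mathbf X$, and that the relevant object $\pi_{t,\eps}$ is an empirical measure obtained by averaging the occupation measure $t^{-1}\int_0^t\delta_{(\widetilde\alpha_s,\widetilde\xi_s)}\,ds$ over spatial translations in the box $[-\eps t,\eps t]\cap\Z$. The natural tool is the Donsker--Varadhan large deviation theory for occupation measures of Markov processes, combined with the extra spatial averaging. Concretely, one defines the rate functional ${\mathcal D}_v$ as the appropriate Donsker--Varadhan entropy functional for the process with generator $L^v$, and one checks the three claimed properties: (i) ${\mathcal D}_v$ is nonnegative — immediate from its variational definition; (ii) ${\mathcal D}_v$ is lower semicontinuous — a standard consequence of being a supremum of continuous functionals; (iii) ${\mathcal D}_v^{-1}(0)={\mathcal I}_{L^v}$ — a measure has zero rate if and only if it is stationary for $L^v$, which is the classical characterization. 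Since the reference \cite{bgrs3}, Lemma 3.4, already establishes exactly this statement, I would reduce the proof to invoking that result, noting that the only model-dependent inputs are: $\mathbf A\times\mathbf X$ is compact, $L^v$ is Feller and translation-invariant, and the process has uniformly bounded jump rates (guaranteed by \eqref{special_choice} and $\|b\|_\infty<\infty$), all of which hold here.

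The key steps, in order, would be: (1) verify the hypotheses of the abstract large deviation principle — compactness of the state space, Feller property of the semigroup $S^v(t)$ generated by $L^v$, and translation invariance $\tau_x L^v=L^v\tau_x$ (which follows from \eqref{eq:L-transl-inv} together with the fact that $S^v$ is itself a shift generator); (2) state the Donsker--Varadhan upper bound for the occupation measure, uniformly over initial distributions, using the uniform-in-initial-condition form valid on compact spaces; (3) incorporate the spatial averaging over $[-\eps t,\eps t]$, observing that translating the occupation measure only reshuffles the rate functional by translation invariance, so the rate for the spatially averaged measure dominates the infimum of the original rate over the closed set $F$ — here one uses that the number of translates, $O(\eps t)$, is subexponential in $t$, so it does not affect the exponential rate; (4) identify the resulting functional as ${\mathcal D}_v$ and read off its three properties from the general theory.

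I expect the main obstacle — or rather the main point requiring care — to be the passage from the occupation measure to the spatially averaged empirical measure $\pi_{t,\eps}$ while keeping the bound \emph{uniform over all initial distributions} $\nu\in\mathcal P(\mathbf A\times\mathbf X)$. The uniform-in-$\nu$ statement is stronger than the usual Donsker--Varadhan bound and relies essentially on compactness of $\mathbf A\times\mathbf X$ (so that the super-exponential estimates are uniform) and on the fact that the contribution of the initial distribution to the log-Laplace functional is $O(1)$, hence negligible after dividing by $t$. Since this is precisely what is proved in \cite[Lemma 3.4]{bgrs3}, the honest statement is that Lemma \ref{deviation_empirical} is a direct transcription of that result to the present setting, the disorder component $\widetilde\alpha_s$ playing no special role beyond enlarging the (still compact) state space; I would therefore present the proof as an application of \cite[Lemma 3.4]{bgrs3}, checking only that our generator $L^v$ satisfies its hypotheses.
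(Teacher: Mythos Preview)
Your proposal is correct and matches the paper's approach exactly: the paper does not give an independent proof of this lemma but simply states that it is ``proved in \cite[Lemma 3.4]{bgrs3}''. Your additional discussion of the Donsker--Varadhan machinery and the verification of hypotheses (compactness of $\mathbf A\times\mathbf X$, Feller property, translation invariance of $L^v$, bounded jump rates) is a reasonable elaboration, but the paper itself offers none of this and treats the lemma as a black-box citation.
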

\begin{proof}{Lemma}{lemma_empirical}
We give a brief sketch of the arguments  (details are similar to
\cite[Lemma 3.3]{bgrs3}).  Spatial averaging in
\eqref{def_empirical-gen} implies that any subsequential limit
$\mu$ lies in $\mathcal S$. Lemma \ref{deviation_empirical} and
Borel-Cantelli's Lemma imply that $\mu$ lies in $\mathcal I_{L^v}$
(uniformity in \eqref{ld} is important because $\theta$-shifts make
the initial distribution of the process unknown). Finally, the
inequality $\nu^\lambda\ll\mu\ll\nu^\rho$ is obtained by coupling
the initial distribution with $\eta^\lambda$ and $\eta^\rho$, using
attractiveness and space-time ergodicity for the equilibrium
processes.
\end{proof}
\mbox{}\\ \\
\begin{proof}{lemma}{current_comparison}
Assume for instance $\eta\leq\eta'$.
Let $\gamma:=T(\eta,\xi)$ and $\gamma':=T(\eta',\xi)$,
$\gamma_t=\eta_t(\alpha,\gamma,\omega)$ and
$\gamma'_t=\eta_t(\alpha,\gamma',\omega)$.
By \eqref{attractive_0}, $\gamma_t\leq\gamma'_t$
for all $t\geq 0$. By definition of the current,
$\phi^v_t(\alpha,\eta',\xi,\omega)-\phi^v_t(\alpha,\eta,\xi,\omega)
=\sum_{x>vt}[\gamma'_t(x)-\gamma_t(x)]\geq 0$.
\end{proof}
\section{Other models}\label{sec:general}
For the proof of Theorem \ref{th:hydro} we have not used the particular
form of $L_\alpha$ in \eqref{generator}, but the following  properties.\\ \\
1) The set of environments is a probability space $({\mathbf
A},{\mathcal F}_{\mathbf A},Q)$, where $\mathbf A$ is a  compact
metric space and ${\mathcal F}_{\mathbf A}$ its Borel
$\sigma$-field. On $\mathbf A$ we have a group of space shifts
$(\tau_x:\,x\in\Z)$, with respect to which $Q$ is ergodic.  For each
$\alpha\in{\mathbf A}$, $L_\alpha$ is the generator of a Feller
process on ${\mathbf X}$ 
that satisfies \eqref{commutation}.
The latter should be viewed as the assumption on ``how the
disorder enters the dynamics''. It is equivalent to  $L$ satisfying \eqref{eq:L-transl-inv}, 
that is  being a translation-invariant generator on ${\mathbf A}\times{\mathbf X}$.\\ \\
2) For $L_\alpha$ we can define a graphical construction  \eqref{update_rule}
on a space-time Poisson space $(\Omega,\mathcal F,\Prob)$ such that
$L_\alpha$ coincides with  \eqref{gengen},  for  some 
mapping ${\mathcal T}^{\alpha,z,v}$ satisfying the shift commutation
and strong attractiveness properties \eqref{shift_t} and
\eqref{attractive_0}.
The existence of this graphical construction for the infinite-volume
system follows from assumption \textit{(A2)}, which controls the rate of faraway jumps.
 This assumption is also responsible for the finite propagation property of discrepancies in the particle
system, and its macroscopic counterpart, the Lipschitz continuity of the flux function 
 (see \eqref{flux}, 
Remarks \ref{remark_flux} and \ref{remark_lipschitz}). \\ \\
3) Irreducibility and non-degeneracy assumptions  \textit{(A1), (A4)}
(combined with attractiveness  assumption  \textit{(A5)}) imply Proposition \ref{prop_irred}. \\ \\
In the sequel we consider other models satisfying 1) and 2), for
which appropriate assumptions replacing   \textit{(A1)--(A5)} 
imply existence of  a graphical construction, and Proposition
\ref{prop_irred}  as in 3).  In these examples, the transition defined by  $\mathcal T^{\alpha,z,v}$ in \eqref{update_rule}
is a particle jump, that is of the form 
$\mathcal T^{\alpha,z,v}\eta=\eta^{x(\alpha,z,v),y(\alpha,z,v)}$. It follows that \eqref{gengen} yields (in replacement of \eqref{generator})
\be\label{generic_form}
L_\alpha f(\eta)=\sum_{x,y\in{\Z}}c_\alpha(x,y,\eta)\left[ f\left(\eta^{x,y} \right)-f(\eta)
\right] 
\ee
where
\be\label{general_rate}
c_\alpha(x,y,\eta)=\sum_{z\in\Z}m\left(
\left\{
v\in\mathcal V:\,\mathcal T^{\alpha,z,v}\eta=\eta^{x,y}
\right\}
\right)
\ee
and the shift-commutation property \eqref{shift_t} implies
\be\label{shift_c}
c_\alpha(x,y,\eta)=c_{\tau_x\alpha}(0,y-x,\tau_x\eta)
\ee
which, for \eqref{generic_form}, is equivalent to \eqref{commutation}.
Microscopic fluxes \eqref{def_f} and \eqref{other_flux} more generally write
\beq\nonumber
j^+(\alpha,\eta) &=& \sum_{y,z\in\Z:\,y\leq 0<y+z}
c_\alpha(\eta(y),\eta(y+z))\\\nonumber
\nonumber j^-(\alpha,\eta) & = &
\sum_{y,z\in\Z:\,y+z\leq 0<y}c_\alpha(\eta(y),\eta(y+z)) \\
\widetilde{\jmath}(\alpha,\eta) & = & \sum_{z\in\Z}zc_\alpha(0,z,\eta)\label{other_flux_general}
\eeq
\subsection{Generalized misanthropes' process}
Let $c\in(0,1)$, and  $p(.)$ (resp. $P(.)$), be a probability
distribution on $\Z$ satisfying assumption \textit{(A1)} (resp.
\textit{(A2)}).  Define $\mathbf A$ to be the set of  functions
 $B:\Z^2\times\{0,\ldots,K\}^2\to\R^+$ such that  for all
$(x,z)\in\Z^2$,  $B(x,z,.,.)$ satisfies assumptions
\textit{(A3)--(A5)} and
\begin{eqnarray}\label{genmis_1}
B(x,z,1,K-1) & \geq & cp(z)\\
\label{genmis_2}
\quad B(x,z,K,0) & \leq & c^{-1}P(z)
\end{eqnarray}
The shift operator  $\tau_y$  on $\mathbf A$ is defined by
$
(\tau_y B)(x,z,n,m)=B(x+y,z,n,m)
$.
We generalize \eqref{generator} by setting
\be\label{generator_genmis}
L_\alpha f(\eta)=\sum_{x,y\in{\Z}}B(x,y-x,\eta(x),\eta(y)) \left[ f\left(\eta^{x,y} \right)-f(\eta)
\right]
\ee
where we assume that the distribution $Q$ of $B(.,.,.,.)$ is ergodic
with respect to the above spatial shift  (we kept the notation $L_\alpha$
to be consistent with the rest of the paper, but we should have written $L_B$). 
Assumption \eqref{genmis_1} replaces \textit{(A1)} and implies
Proposition \ref{prop_irred}.
Assumption \eqref{genmis_2} replaces \textit{(A2)} and implies existence
of the infinite volume dynamics given by the following graphical construction.
For $v=(z,u)$, set  $m(dv)=c^{-1}P(dz)\lambda_{[0,1]}(du)$
in \eqref{special_choice}, and replace \eqref{update_misanthrope} with
\be\label{update_genmis}
{\mathcal T}^{\alpha,x,v}\eta=\left\{
\ba{lll}
\eta^{x,x+z} & \mbox{if} & \displaystyle{ u<\frac{B(x,z,\eta(x),\eta(x+z))}
{c^{-1}P(z)}}\\
\eta &  & \mbox{otherwise}
\ea
\right.
\ee
 Here the microscopic flux \eqref{other_flux_general} writes
$$
\widetilde{\jmath}(\alpha,\eta)=\sum_{z\in\Z}zB(0,z,\eta(0),\eta(z))
$$
and the Lipschitz constant  $V=2c^{-1}\sum_{z\in\Z}|z|P(z)$ for $G^Q$ follows as in \eqref{maxspeed} from  \eqref{standard_coupling}--\eqref{compare_fluxes}.
The basic model \eqref{generator} is  recovered with 
$B(x,z,n,m)=\alpha(x)p(z)b(n,m)$, for $p(.)$ a probability
distribution on $\Z$ satisfying \textit{(A1)--(A2)}, $\alpha(.)$ an
ergodic $(c,1/c)$-valued random field, and $b(.,.)$ a function
satisfying   \textit{(A3)--(A5)}. In this case
\eqref{genmis_1}--\eqref{genmis_2} hold
 with $P(.)=p(.)$. Here are two other examples.\\ \\
 {\em Example 1.1.}  This is the bond-disorder  version of \eqref{generator}:  we have 
$B(x,z,n,m)=\alpha(x,x+z)b(n,m)$,  where  $\alpha=(\alpha(x,y):\,x,y\in\Z)$
is a positive random field on $\Z^2$,  bounded away from $0$, ergodic
with respect to the space shift $\tau_z\alpha=\alpha(.+z,.+z)$. Sufficient
assumptions replacing \textit{(A1)} and \textit{(A2)} are
\be\label{assumptions_bond}c\,p(y-x)\leq \alpha(x,y)\leq c^{-1}P(y-x)\ee
for some constant $c>0$, and probability distributions $p(.)$ and $P(.)$ on $\Z$,
respectively  satisfying  \textit{(A1)} and \textit{(A2)}.\\ \\
 {\em Example 1.2.}   This is a model that switches between two rate functions
according to the environment:  we have
$B(x,z,n,m)=p(z)[(1-\alpha(x))b_0(n,m)+\alpha(x)b_1(n,m)]$,  where
$(\alpha(x),\,x\in\Z)$ is an ergodic $\{0,1\}$-valued field, $p(.)$
satisfies assumption \textit{(A1}), and $b_0$, $b_1$  
assumptions \textit{(A3)--(A5)}.
\subsection{Generalized $k$-step $K$-exclusion process}\label{subsec:k-step}
 We  first recall the definition of the $k$-step exclusion
process, introduced in \cite{guiol}. Let  $K=1$,  $k\in\N$, and $p(.)$ be a
jump kernel on $\Z$ satisfying assumptions \textit{(A1)--(A2)}. A
particle at $x$ performs a random walk with kernel $p(.)$ and jumps
to the first vacant site it finds along this walk, unless it returns
to $x$ or does not find an empty site within
$k$ steps, in which case it stays at $x$. \\ \\
To generalize this, let  $K\geq 1$,  $k\geq 1$, $c\in(0,1)$, and  $\mathcal D$
denote the set of functions $\beta=(\beta^1,\ldots,\beta^k)$ from
$\Z^k$ to  $(0,1]^k$  such that
\begin{eqnarray}\label{eq:(o)}
\beta^1(.)&\in&[c,1]\\
\label{eq:(i)}
\beta^i(.)&\geq&\beta^{i+1}(.),\,\forall i\in\{1,\ldots,k-1\}
\end{eqnarray}
In the sequel, an element of $\Z^k$ is denoted by
$\underline{z}=(z_1,\ldots,z_k)$.
Let $q$ be a probability distribution on $\Z^k$, and $\beta\in\mathcal D$.
We define the $(q,\beta)$-$k$ step $K$-exclusion process as follows.
A particle at $x$  (if some)  picks a $q$-distributed random vector $\underline{Z}=(Z_1,\ldots,Z_k)$,
and jumps to the  first  site $x+Z_i$ ($i\in\{1,\ldots,k\})$  with strictly less than $K$ particles
along the path $(x+Z_1,\ldots,x+Z_k)$, if such a site exists, with rate $\beta^i(\underline{Z})$.
Otherwise, it stays at $x$.
The $k$-step exclusion process corresponds to the particular case where  $K=1$,
$q$ is the distribution (hereafter denoted by $q^k_{RW}(p))$ of the first
$k$ steps  of a random walk with kernel $p(.)$ absorbed at $0$, and $\beta^i(\underline{z})=1$.
Outside the fact that $K$ can take values $\geq 1$, our model extends  $k$-step exclusion in different directions:  \\ 
\textit{(1)} The random path followed by the particle need not be a  Markov process.\\
\textit{(2)} The distribution $q$ is not necessarily supported on paths absorbed at 0.\\
\textit{(3)}  Different rates can be assigned to jumps according to the
number of steps, and the collection of these rates may depend on the path realization. \\ \\
Next, disorder is introduced:
the environment is a field
$\alpha=((q_x,\beta_x):\,x\in\Z)\in{\mathbf A}:=(\mathcal
P(\Z^k)\times\mathcal D)^\Z$. For a given realization of the
environment,
the distribution of the path $\underline{Z}$ picked by a particle
at $x$ is $q_x$, and the rate at which it jumps to $x+Z_i$ is $\beta^i_x(\underline{Z})$.
The corresponding generator is given by  \eqref{generic_form} with $c_\alpha=\sum_{i=1}^k c_\alpha^i$, where
(with the convention that an empty product is equal to $1$)
$$c_\alpha^i(x,y,\eta )={\bf 1}_{\{\eta(x)>0\}}{\bf 1}_{\{\eta(y)<K\}} \int\left[
\beta^i_x(\underline{z}) {\bf 1}_{\{x+z_i=y\}} \prod_{j=1}^{i-1}{\bf 1}_{\{\eta(x+z_j)=K\}}
\right]\,dq_x(\underline{z})
$$
 The distribution $Q$ of the environment on $\mathbf A$  is assumed ergodic
with respect to the space shift  $\tau_y$, where
$\tau_y\alpha=((q_{x+y},\beta_{x+y}):\,x\in\Z)$. \\ \\
For the existence of the process and graphical construction below,
and for Proposition \ref{prop_irred},  sufficient assumptions to
replace \textit{(A1)--(A2)} are
\begin{eqnarray}\label{irreducibility_kstep}
\inf_{x\in\Z} q^1_x(.) & \geq & c p(.)\\\label{summability_kstep}
\sup_{i=1,...,k}\sup_{x\in\Z}q^i_x(.) & \leq &  c^{-1}P(.)
\end{eqnarray}
 for some constant $c>0$, where  $q_x^i$ denotes the $i$-th marginal of $q_x$,
and $p(.)$, resp. $P(.)$, are probability distributions satisfying \textit{(A1)}, resp. \textit{(A2)}.
 To write the microscopic flux and define a graphical construction, we introduce the following notation: for
$(x,\underline{z},\eta)\in\Z\times\Z^k\times{\mathbf X}$,  $\beta\in\mathcal D$ and $u\in[0,1]$, 
\begin{eqnarray*}\label{nbsteps} N(x,\underline{z},\eta) &=&
\inf\left\{i\in\{1,\ldots,k\}:\,\eta\left(x+z_i\right)<K\right\}
\mbox{ with} \inf\emptyset=+\infty \\ 
\label{finloc} Y(x,\underline{z},\eta) &=& \left\{
\ba{lll}
x+z_{N(x,\underline{z},\eta)} & \mbox{if} & N(x,\underline{z},\eta)<+\infty\\
x & \mbox{if} & N(x,\underline{z},\eta)=+\infty
\ea
\right. \\
{{\mathcal T}_0}^{x,\underline{z},\beta,u}\eta&=&\left\{
\ba{lll}
\eta^{x,Y(x,\underline{z},\eta)} & \mbox{if} & \eta(x)>0\mbox{ and }u< \beta^{N(x,\underline{z},\eta)}(\underline{z}) \\
\eta &  & \mbox{otherwise} 
\ea
\right.
\end{eqnarray*} 
 (where the definition of $\beta^{+\infty}(\underline{z})$ has no importance).
 With these notations, we have
\begin{eqnarray}
c_\alpha(x,y,\eta) & = & {\bf 1}_{\{\eta(x)>0\}}\Exp_{q_0}\left[
\beta^{N(x,\underline{Z},\eta)}_0 {\bf 1}_{\{Y(x,\underline{Z},\eta)=y\}}
\right]\label{rate_genkstep} \\
\label{flux_genkstep}
\widetilde{\jmath}(\alpha,\eta) & = & {\bf 1}_{\{\eta(0)>0\}}\Exp_{q_0}\left[
\beta^{N(0,\underline{Z},\eta)}_0 Y(0,\underline{Z},\eta)
\right]
\end{eqnarray}
where expectation is with respect to $\underline{Z}$. Since 
$$\left|
\beta^{N(0,\underline{Z},\eta)}_0 Y(0,\underline{Z},\eta)-
\beta^{N(0,\underline{Z},\xi)}_0 Y(0,\underline{Z},\xi)
\right|\leq
2\sum_{i=1}^k |Z_i|\sum_{i=1}^k|\eta(Z_i)-\xi(Z_i)|
$$
\eqref{standard_coupling}--\eqref{compare_fluxes} yield for $G^Q$ the Lipschitz constant 
$V=2k^2c^{-1}\sum_{z\in\Z}|z|P(z)$.\\ \\
Let $\mathcal V=[0,1]\times[0,1]$,
$m=\lambda_{[0,1]}\otimes\lambda_{[0,1]}$. For each  probability
distribution $q$ on $\Z^k$, there exists a mapping
$F_q:[0,1]\to\Z^k$ such that $F_q(V_1)$ has distribution $q$ if
$V_1$ is uniformly distributed on $[0,1]$. Then the transformation
$\mathcal T$ in \eqref{update_rule} is defined by  (with
$v=(v_1,v_2)$ and  $\alpha=((q_x,\beta_x):\,x\in\Z))$
\be\label{update_kstep}
{\mathcal T}^{\alpha,x,v}\eta={\mathcal T}_0^{x,F_{q_x}(v_1),\beta_x(F_{q_x}(v_1)),v_2}\eta
\ee
Strong attractiveness of our process will follow from
\begin{lemma}\label{attractive_kstep}
For every $(x,\underline{z},u)\in\Z\times\Z^k\times[0,1]$,
${\mathcal T}_0^{x,\underline{z},\beta,u}$ is an increasing
mapping from ${\mathbf X}$ to ${\mathbf X}$.
\end{lemma}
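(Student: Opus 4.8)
The plan is to verify directly that $\eta\leq\eta'$ implies ${\mathcal T}_0^{x,\underline{z},\beta,u}\eta\leq{\mathcal T}_0^{x,\underline{z},\beta,u}\eta'$, treating the environment parameters $(x,\underline{z},\beta,u)$ as fixed throughout. The natural first step is to record how the monotonicity of $\eta\mapsto(N(x,\underline{z},\eta),Y(x,\underline{z},\eta))$ behaves: since each indicator ${\bf 1}_{\{\eta(x+z_i)=K\}}$ is nonincreasing in $\eta$ (more particles at a site can only destroy the event $\{\cdot=K\}$ when $\eta(x+z_i)<K$ for $\eta'$), the first index $i$ along the path with $\eta(x+z_i)<K$ can only move earlier or stay put as we pass from $\eta$ to $\eta'$; hence $N(x,\underline{z},\eta')\leq N(x,\underline{z},\eta)$, with the convention $\inf\emptyset=+\infty$. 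In particular if the jump for $\eta$ is accepted (i.e. $\eta(x)>0$, $u<\beta^{N(x,\underline{z},\eta)}(\underline{z})$, and $N<+\infty$), then because $\beta^i(\underline{z})$ is nonincreasing in $i$ by \eqref{eq:(i)} and $N'\leq N$, the jump for $\eta'$ is also accepted, provided $\eta'(x)>0$, which holds since $\eta'(x)\geq\eta(x)>0$.

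With that preparation, the argument splits into the usual handful of cases for basic-coupling-type monotonicity proofs. Let $w=Y(x,\underline{z},\eta)$ and $w'=Y(x,\underline{z},\eta')$ be the (possibly equal, possibly different) destinations. \textbf{Case 1:} neither configuration jumps — then both are left unchanged and $\eta\leq\eta'$ is untouched. \textbf{Case 2:} $\eta'$ jumps but $\eta$ does not. Since $\eta$ does not jump while $\eta(x)>0$ is possible, the only obstructions are $u\geq\beta^{N(x,\underline z,\eta)}(\underline z)$ or $N(x,\underline z,\eta)=+\infty$; in either sub-case one checks that $\eta^{x,\,\cdot}$ is not performed, $\eta$ is unchanged, and removing a particle from $x$ in $\eta'$ and adding one at $w'$ keeps $\eta\leq(\eta')^{x,w'}$ — the only site where this could fail is $x$, where $\eta(x)\leq\eta'(x)-1$ must be argued, and $w'$, where the inequality only improves. \textbf{Case 3:} $\eta$ jumps (hence $\eta'$ jumps too, by the previous paragraph). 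Now both perform a particle move, and one must show $\eta^{x,w}\leq(\eta')^{x,w'}$. If $w=w'$ this is immediate since the same modification is applied to both. If $w\neq w'$, then necessarily $N'<N$ (the destinations differ only because the earlier acceptance point along the path differs), so $w'=x+z_{N'}$ sits strictly before $w=x+z_N$ along the path, and at $w'$ we have $\eta(w')=K$ while $\eta'(w')<K$; the delicate site-by-site check at $x$, $w$, $w'$ then uses $\eta(w')=K\geq\eta'(w')+1$ and $\eta(w)\leq\eta'(w)+1$ (the latter from $\eta(w)<K$ and $\eta\le\eta'$, using the $K$-exclusion ceiling) together with $\eta(x)\leq\eta'(x)$, giving $\eta^{x,w}(\cdot)\leq(\eta')^{x,w'}(\cdot)$ everywhere.

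The main obstacle is precisely Case 3 with $w\neq w'$: one has to juggle three distinguished sites simultaneously and use both the order $\eta\leq\eta'$ \emph{and} the $K$-exclusion constraints ($0\leq\eta\leq K$) to close the inequality, since a naive site-by-site comparison fails at $w$ and $w'$ and is rescued only by the saturation facts $\eta(w')=K$ and $\eta(w)<K$ forced by the definition of $N$ and $N'$. Everything else is bookkeeping: the monotonicity of $N$ and $Y$ in Case 3 with $w = w'$, and Cases 1–2, are routine once the indicators and the ordering of $\beta^i$ are invoked. Note this lemma, combined with \eqref{update_kstep} and the definition of $m$, gives \eqref{attractive_0} for the generalized $k$-step $K$-exclusion process, which is exactly property 3) needed to run the proof of Theorem \ref{th:hydro} for this family of models.
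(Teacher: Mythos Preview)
Your argument has a sign error at the very start that propagates through the whole case analysis. You claim that ${\bf 1}_{\{\eta(x+z_i)=K\}}$ is nonincreasing in $\eta$ and hence $N(x,\underline{z},\eta')\leq N(x,\underline{z},\eta)$ when $\eta\leq\eta'$. The opposite is true: if $\eta\leq\eta'$ and $\eta(x+z_i)=K$ then $\eta'(x+z_i)=K$ as well, so the indicator is \emph{nondecreasing}, and the first index along the path where the configuration is below $K$ can only occur \emph{later} (or never) for the larger configuration. Thus $N(x,\underline{z},\eta)\leq N(x,\underline{z},\eta')$, exactly as in the paper's proof. Your own Case~3 with $w\neq w'$ in fact reveals the inconsistency: you assert $\eta(w')=K$ and $\eta'(w')<K$, which already contradicts $\eta\leq\eta'$.

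Because the direction of monotonicity of $N$ is reversed, your implication ``$\eta$ jumps $\Rightarrow$ $\eta'$ jumps'' is false, and you are missing the genuinely delicate case: $\eta$ jumps while $\eta'$ does not. This happens either when $N(x,\underline{z},\eta)<N(x,\underline{z},\eta')=+\infty$, or when both are finite but $\beta':=\beta^{N(x,\underline{z},\eta')}(\underline{z})\leq u<\beta^{N(x,\underline{z},\eta)}(\underline{z})=:\beta$ (using \eqref{eq:(i)}). In either situation one must check the inequality at $w=Y(x,\underline{z},\eta)$ using the saturation fact $\eta'(w)=K$, which is forced by $N(x,\underline{z},\eta')>N(x,\underline{z},\eta)$. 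This is precisely what the paper does; once you flip the direction of $N$ and restructure the cases accordingly (the paper organizes them by the values of $\eta(x)$, $\xi(x)$ and then by the relative positions of $N(\eta),N(\xi)$ and of $u$ versus $\beta,\beta'$), the site-by-site verification you outline goes through.
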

\begin{proof}{lemma}{attractive_kstep}
Let $(\eta,\xi)\in{\mathbf X}^2$  with $\eta\leq\xi$. To prove that
${\mathcal T}_0^{x,\underline{z},\beta,u}\eta\leq {\mathcal
T}_0^{x,\underline{z},\beta,u}\xi$, since $\eta$ and $\xi$ can only
possibly change at sites $x$, $y:=Y(x,\underline{z},\eta)$  and
$y':=Y(x,\underline{z},\xi)$,  it is sufficient to verify the
inequality at these sites.
\\ \\
If $\xi(x)=0$, then by \eqref{update_kstep}, $\eta$ and $\xi$ are
both unchanged by ${\mathcal T}_0^{x,\underline{z},\beta,u}$. If
$\eta(x)=0<\xi(x)$, then  ${\mathcal
T}_0^{x,\underline{z},\beta,u}\xi(y')
\geq\xi(y')\geq\eta(y')={\mathcal T}_0^{x,\underline{z},\beta,u}\eta(y')$.  \\ \\
Now assume  $\eta(x)>0$. Then $\eta\leq\xi$ implies $N(x,\underline{z},\eta)\leq N(x,\underline{z},\xi)$. If $N(x,\underline{z},\eta)=+\infty$,  $\eta$ and $\xi$ are unchanged. If $N(x,\underline{z},\eta)<N(x,\underline{z},\xi)=+\infty$, then ${\mathcal T}_0^{x,\underline{z},\beta,u}\eta=\eta^{x,y}$ and  $\xi(y)=K$.  Thus,
${\mathcal T}_0^{x,\underline{z},\beta,u}\eta(x)=\eta(x)-1\leq\xi(x)={\mathcal T}_0^{x,\underline{z},\beta,u}\xi(x)$ and  ${\mathcal T}_0^{x,\underline{z},\beta,u}\xi(y)=\xi(y)=K\geq{\mathcal T}_0^{x,\underline{z},\beta,u}\eta(y)$.
If $N(x,\underline{z},\eta)=N(x,\underline{z},\xi)<+\infty$, then
$\beta^{N(x,\underline{z},\eta)}=\beta^{N(x,\underline{z},\xi)}=:\beta$.
If $u\geq\beta$ both $\eta$ and $\xi$ are unchanged. Otherwise
${\mathcal T}_0^{x,\underline{z},\beta,u}\eta=\eta^{x,y}$ and
${\mathcal T}_0^{x,\underline{z},\beta,u}\xi=\xi^{x,y}$, whence the
conclusion. Finally, assume
$N(x,\underline{z},\eta)<N(x,\underline{z},\xi)<+\infty$, hence
$\beta:=\beta^{N(x,\underline{z},\eta)}\geq\beta^{N(x,\underline{z},\xi)}=:\beta'$
by \eqref{eq:(i)} and  $\eta(y)<\xi(y)=K$.  If $u\geq\beta$, $\eta$ and $\xi$ are unchanged.
 If $u<\beta'$, then  ${\mathcal
T}_0^{x,\underline{z},\beta,u}\eta(y)=\eta(y)+1\leq\xi(y)={\mathcal
T}_0^{x,\underline{z},\beta,u}\xi(y)=K$  and ${\mathcal
T}_0^{x,\underline{z},\beta,u}\xi(y')=\xi(y')+1\geq {\mathcal
T}_0^{x,\underline{z},\beta,u}\eta(y')$.  If $\beta'\leq u<\beta$,
then  ${\mathcal T}_0^{x,\underline{z},\beta,u}\eta(x)=\eta(x)-1\leq
{\mathcal T}_0^{x,\underline{z},\beta,u}\xi(x)$ and   ${\mathcal
T}_0^{x,\underline{z},\beta,u}\eta(y)=\eta(y)+1\leq {\mathcal
T}_0^{x,\underline{z},\beta,u}\xi(y)=\xi(y)=K$.
\end{proof}
\mbox{}\\ \\
We now describe a few examples.\\ \\
 {\em Example 2.1.} Let  $K=1$,  $(\alpha_x:\,x\in\Z)$ be an ergodic $[c,1/c]$-valued
random field, and $r(.)$ be a
probability measure on $\Z$ satisfying  \textit{(A1)--(A2)}.  A disordered
version of the $k$-step exclusion process with jump kernel $r$ is obtained
by multiplying the rate of any jump starting from $x$ by $\alpha_x$. This
means that the random field $(q_x,\beta_x)_{x\in\Z}$ is defined by
$q_x=q^k_{RW}(r)$, and  $\beta_x(\underline{z})=(\alpha_x,\ldots,\alpha_x)$ for every $\underline{z}\in\Z^k$.\\ \\
 {\em Example 2.2.} Let $(\gamma_x,\iota_x)_{x\in\Z}$ be an ergodic
$[c,1]^{2k}$-valued random field, 
where $\gamma_x=(\gamma_x^n,\,1\le n\le k)$ and $\iota_x=(\iota_x^n,\,1\le n\le k)$. 
The random field $(q_x,\beta_x)_{x\in\Z}$ is defined by 
\begin{eqnarray*}
q_x&=&\frac{1}{2}\delta_{(1,2,\ldots,k)}+\frac{1}{2}\delta_{(-1,-2,\ldots,-k)}\\ \beta^i_x(1,2,\ldots,k)=2\gamma^i_x,&&\beta^i_x(-1,-2,\ldots,-k)=2\iota^i_x
\end{eqnarray*}
 Hence the rates are disordered but not the distribution of the random path
followed by particles: the stationary random field $(q_x)_{x\in\Z}$ is
deterministic and uniform. Here, the  jump rate and microscopic flux \eqref{rate_genkstep}--\eqref{flux_genkstep}  have a fairly explicit form:
\begin{eqnarray}
\label{fairly_explicit_rate_1} c_\alpha(x,y,\eta) & = & \gamma^{y-x}_x {\bf 1}_{\{\eta(x)>0\}}{\bf 1}_{\{\eta(y)<K\}}\prod_{z=x+1}^{y-1}{\bf 1}_{\{\eta(z)=K\}}\mbox{ if }y>x\\
\label{fairly_explicit_rate_2} c_\alpha(x,y,\eta) & = & \iota^{x-y}_x {\bf 1}_{\{\eta(x)>0\}}{\bf 1}_{\{\eta(y)<K\}}\prod_{z=y+1}^{x-1}{\bf 1}_{\{\eta(z)=K\}}\mbox{ if }y<x\\
\nonumber
\widetilde{\jmath}(\alpha,\eta) & = & \eta(0)\sum_{n=1}^k n\gamma^n_0(1-\eta(n))\prod_{j=1}^{n-1}\eta(j)\\
& - & \eta(0)\sum_{n=1}^k n\iota^n_0(1-\eta(-n))\prod_{j=1}^{n-1}\eta(-j)\label{fairly_explicit}
\end{eqnarray}
{\em Example 2.3.} Set $q_x=q^k_{RW}(r_x)$, for $(r_x)_{x\in\Z}$ an
ergodic random field with values in the probability measures on $\Z$
satisfying \textit{(A1)--(A2)}. The simplest case is
nearest-neighbor jumps, that is,
$r_x=p_x\delta_1+(1-p_x)\delta_{-1}$, where, for some
$c\in(0,1)$, $(p_x)_{x\in\Z}$ is an ergodic $[c,1/c]$-random
field.  
 Due to the nearest-neighbor assumption, a particle starting from $x$ can only jump to $y>x$ (resp. $y<x$) if $y$ is not full and all sites between $x$ and $y$
(resp. $y$ and $x$) are full. Hence,
the jump rate \eqref{rate_genkstep} is identical (see example below) to the one obtained by taking in  \eqref{fairly_explicit_rate_1}--\eqref{fairly_explicit_rate_2}
$$\gamma^n_x=\sum_{l=0}^{\lfloor(k-n)/2\rfloor}p_x^{n+l}(1-p_x)^l C_n(n+l,l),
\quad\iota^n_x=\sum_{l=0}^{\lfloor(k-n)/2\rfloor}(1-p_x)^{n+l}p_x^l C_n(n+l,l)
$$
for $n\in\{1,\ldots,k\}$, 
where $C_n(i,j)$, for $i,j\in\Z^+$ and $i+j>0$, is the number of paths  $(z_0=0,\ldots,z_{i+j})$ such that $0<z_m<n$
for  $m=1,\ldots,i+j-1$,  $|z_{m+1}-z_{m}|=1$ for $m=1,\ldots,i+j$, 
and ${\rm Card}\{m\in\{1,\ldots,i+j\}:\,z_m-z_{m-1}=1\}=i$.
With this choice of $\gamma^n_x$ and $\iota^n_x$, the microscopic flux is given by \eqref{fairly_explicit}.
 For instance if $k=5$, we obtain, for $n\in\{1,\ldots,k\}$:
\begin{eqnarray}
c_\alpha(x,x+n,\eta) & = & p_x^n{\bf 1}_{\{\eta(x)>0\}}{\bf 1}_{\{\eta(x+n)<K\}}\prod_{j=1}^{n-1}{\bf 1}_{\{\eta(x+j)=K\}}\quad\mbox{ if }n\neq 3\\
\nonumber
c_\alpha(x,x+3,\eta) & = & p_x^3[1+p_x(1-p_x)]\times\\
\label{step3_right}&&\qquad\eta(x)\eta(x+1)\eta(x+2)(1-\eta(x+3))\\
\nonumber
c_\alpha(x,x-n,\eta) & = & (1-p_x)^n{\bf 1}_{\{\eta(x)>0\}}{\bf 1}_{\{\eta(x-n)<K\}}\prod_{j=1}^{n-1}{\bf 1}_{\{\eta(x-j)=K\}}\quad\mbox{ if }n\neq 3\\
\nonumber
c_\alpha(x,x-3,\eta) & = & (1-p_x)^3[1+p_x(1-p_x)]\times\\
\nonumber&&\qquad\eta(x)\eta(x-1)\eta(x-2)(1-\eta(x-3))
\end{eqnarray}
Indeed,
for $n>0$ and $n\neq 3$, the only path from $x$ to $x+n$ that reaches $x+n$ in at most $k$ steps before returning to $0$ is $x,x+1,\ldots,x+n$.
For $n=3$, the  additional path $x,x+1,x+2,x+1,x+2,x+3$ yields the factor $p_x(1-p_x)$ in \eqref{step3_right}. For $n<0$, we change $p_x$ to $1-p_x$.\\ \\
Note that in this process a given particle does not follow a
random walk in random environment (RWRE) before it finds a  non full  site,
 but a homogeneous random walk depending (randomly) on its
initial location. For instance, in a  $3$-step process,  a particle
initially at $x\in\Z$ will follow the path
$x,x+1,x+2,x+1$ with probability $p_x^2(1-p_{x})$.\\ \\
{\em Example 2.4.} The same random field $(p_x)_{x\in\Z}$ gives a different
model if, at each transition, the selected particle follows a RWRE
$(X_n)_{n\ge 0}$ with transition probabilities
\be\label{rwre}\Prob(X_{n+1}=x+1|X_n=x)=p_x,\quad \Prob(X_{n+1}=x-1|X_n=x)=1-p_x\ee
That is, we let $q_x$ be the distribution of
$(X_1^x-x,\ldots,X_k^x-x)$, for $(X_n^x,\,1\le n\le k)$ a length $k$
Markov chain starting at $x$ with transition probabilities
\eqref{rwre}. There,   unlike in  Example 2.3 above, a particle
initially at $x\in\Z$ follows  the path $x,x+1,x+2,x+1$ with
probability $p_x p_{x+1}(1-p_{x+2})$.  The generator of this process is also identical 
to that of example 2.2, with $\gamma^n_x$ and $\iota^n_x$ of the form
$$
\gamma^n_x=\gamma^n(p_y:\,x\leq y< x+n),
\quad\iota^n_x=\iota^n(p_y:x-n<y\leq x)
$$
for some polynomial functions $\gamma^n,\iota^n:[0,1]^n\to[0,+\infty)$, where $n\in\{1,\ldots,k\}$.
\subsection{ $K$-exclusion process with speed change and traffic flow model}
Let $\mathcal K:=\{-k,\ldots,k\}\setminus\{0\}$, and 
$\alpha=((\upsilon(x),\beta^1_x):x\in\Z)$
be an ergodic $[0,+\infty)^{2k}\times(0,+\infty)$-valued  
field, where $\upsilon(x)=(\upsilon_z(x):\,
z\in{\mathcal K})$.
We define the following dynamics.  Set 
\begin{eqnarray*}
\Theta(x,\eta) & := & \{y\in\Z:\,y-x\in\mathcal K,\,\eta(y)<K\}\\
Z(\alpha,x,\eta) & := & \sum_{z\in\Theta(x,\eta)}\upsilon_{z-x}(x)
\end{eqnarray*}
In configuration $\eta$, if $Z(\alpha,x,\eta)>0$, a particle at $x$
picks a site $y$ at random in $\Theta(x,\eta)$ with probability
 $Z(\alpha,x,\eta)^{-1}\upsilon_{y-x}(x)$,  and  jumps to
this site at rate  $\beta^1_x$.  If  $Z(\alpha,x,\eta)=0$,  nothing
happens. For instance, if  $\upsilon_z(x)\equiv 1$, 
the particle uniformly chooses a site with strictly less than $K$
particles.  The corresponding
generator is  given by \eqref{generic_form}, with 
\[
c_\alpha(x,y,\eta)={\bf 1}_{\{\eta(x)>0\}}{\bf 1}_{\{Z(\alpha,x,\eta)>0\}}{\bf 1}_{\Theta(x,\eta)}(y)
Z(\alpha,x,\eta)^{-1}\upsilon_{y-x}(x)
\]
 Hence, the microscopic flux \eqref{other_flux} writes 
$$
\widetilde{\jmath}(\alpha,\eta)=\beta^1_0{\bf 1}_{\{\eta(0)>0\}}Z(\alpha,0,\eta)^{-1}\sum_{z\in\mathcal K}z\upsilon_z(0){\bf 1}_{\{\eta(z)<K\}}
$$
This process can be compared with a bond-disordered  
$K$- exclusion process in which a particle at $x$ jumps  to $y$ with rate
$\alpha(x,y)=\upsilon_{y-x}(x)$. The difference is that in the
latter, the particle could pick a location occupied by $K$ particles, in which case
the jump is suppressed. In the former, the particle first eliminates
sites occupied by $K$ particles
and picks a site occupied by strictly less than $K$ particles whenever there is at least one.
This results in a speed change  $K$-exclusion process,  that is  the jump rate from
$x$ to $y$ has the form  $c_{x,y}(\eta){\bf 1}_{\{\eta(x)>0\}}{\bf 1}_{\{\eta(y)<K\}}$.
To illustrate this, consider a nearest-neighbor example:  we take $K=1$,  $k=1$,
$\upsilon_1(x)=p(x)\in[0,1]$, $\upsilon_{-1}(x)=1-p(x)$. If sites
 $x-1$  and $x+1$ are free, in both processes the particle
at $x$ moves with rate $\beta^1_x$ to a site picked in $\{x-1,x+1\}$
with probabilities $p(x)$ and
$1-p(x)$. Now assume $x+1$ is free and $x-1$ occupied. If $p(x)=0$,
nothing happens in either process. If $p(x)>0$, at rate $\beta^1_x$,
the particle at $x$   moves to $x+1$ in the speed change process,
while in the bond-disordered process it  moves to $x+1$ with probability
$p(x)$ and attempts in vain to jump to $x-1$ with probability $1-p(x)$.\\ \\
Assume $K=1$, and consider the totally asymmetric case, where $\upsilon_z(x)=0$ for $z<0$.
Recalling that the totally asymmetric exclusion process is a classical
simplified model of single-lane traffic flow (without overtaking) where particles represent cars,
the above model can be viewed as a traffic-flow model with maximum overtaking distance $k$.
This is true also for  Example 2.2  in Subsection \ref{subsec:k-step},
in the totally asymmetric setting  $\iota_x^i=0,1\le i\le k$.  However in the
latter model, an overtaking car has  only one  choice for its new
position.
\\ \\
Though it is not clear from this formulation, we can rephrase this dynamics as a 
 $2k$-step model, which is thus  strongly attractive by Lemma \ref{attractive_kstep}.
To this end we take a random field of the form
$\beta_x=(\beta^1_x,\ldots,\beta^1_x)$, and define
$q_x:=q(\upsilon(x))$, where $q(\upsilon_z:\,z\in{\mathcal K})$ is
the distribution of a random self-avoiding path
$(Z_1,\ldots,Z_{2k})$ in $\mathcal K$ such that
\begin{eqnarray}\label{self_avoiding}
\mathbf{P}(Z_1=y)&=&\frac{\upsilon_y}{\sum_{z\in\mathcal K}
\upsilon_z}\\\label{self_avoiding2} {\mathbf
P}\left(Z_i=y|Z_1,\ldots,Z_{i-1}\right)&=&\frac{\upsilon_{y}}
{\sum_{z\in\mathcal K\backslash\{Z_1,\ldots,Z_{i-1}\}}
\upsilon_{z}}\quad\mbox{ for  } 2\le i\le 2k
\end{eqnarray}
For this model,  assumption \eqref{summability_kstep} is always satisfied,
while \eqref{irreducibility_kstep} reduces to
the existence of a constant $c>0$ and a probability distribution
$p(.)$ on $\Z$ satisfying assumption \textit{(A1)}, such that
$$
\inf_{x\in\Z}\upsilon_.(x)\geq c\,p(.)$$
 The link between the two models comes from
\begin{lemma}
\label{small_computation} Assume $(Z_1,\ldots,Z_{2k})\sim
q(\upsilon_z:\,z\in{\mathcal K})$. Let $\Theta$ be a nonempty subset
of $\{z\in{\mathcal K}:\upsilon_z\not=0\}$,
$\tau:=\inf\{i\in\{1,\ldots,2k\}:\,Z_i\in\Theta\}$,    and
$Y=Z_\tau$. Then
$$
{\mathbf P}\left(Y=y\right)={\bf 1}_{\Theta}(y)\frac{\upsilon_{y}}{
\sum_{y'\in\Theta}\upsilon_{y'}}
$$
\end{lemma}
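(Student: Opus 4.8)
The plan is to ignore the particular structure of $\mathcal{K}$ and establish, by induction on cardinality, the following general statement: \emph{let $V$ be a finite set, $\upsilon\colon V\to[0,\infty)$, and let $(Z_1,\ldots,Z_{\#V})$ be generated from $V$ and $\upsilon$ by the sequential rule of \eqref{self_avoiding}--\eqref{self_avoiding2}; then for every nonempty $\Theta\subseteq\{v\in V:\upsilon_v>0\}$, setting $\tau=\inf\{i:Z_i\in\Theta\}$ and $Y=Z_\tau$, one has $\mathbf{P}(Y=y)=\indicator_\Theta(y)\,\upsilon_y/\sum_{y'\in\Theta}\upsilon_{y'}$.} Lemma \ref{small_computation} is then the instance $V=\mathcal{K}$, $\upsilon=(\upsilon_z(x):z\in\mathcal{K})$, $\Theta$ as in the statement.

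First I would check that $Y$ is well defined: because $\Theta\neq\emptyset$ and every $z\in\Theta$ has $\upsilon_z>0$, the normalising denominator $\sum_{z\notin\{Z_1,\ldots,Z_{i-1}\}}\upsilon_z$ in \eqref{self_avoiding2} remains strictly positive at least until a site of $\Theta$ is first selected, so $\tau\leq\#V$ and the construction is unambiguous up to step $\tau$, whatever convention one adopts once the positive-weight sites are exhausted. The base case $\#V=1$ forces $\Theta=V$ and is trivial. For the induction step I would condition on $Z_1$: if $Z_1\in\Theta$ then $\tau=1$ and $Y=Z_1$; if $Z_1=w\notin\Theta$, then by \eqref{self_avoiding2} the conditional law of $(Z_2,\ldots,Z_{\#V})$ is precisely that of the same construction run on $V\setminus\{w\}$ with the restricted weights, and $\Theta$ is still a nonempty subset of $\{v\in V\setminus\{w\}:\upsilon_v>0\}$, so the induction hypothesis applies to it. Writing $S=\sum_{v\in V}\upsilon_v$ and $S_\Theta=\sum_{v\in\Theta}\upsilon_v>0$, this yields, for each $y\in\Theta$,
\[
\mathbf{P}(Y=y)=\frac{\upsilon_y}{S}+\sum_{w\in V\setminus\Theta}\frac{\upsilon_w}{S}\cdot\frac{\upsilon_y}{S_\Theta}=\frac{\upsilon_y}{S}+\frac{\upsilon_y}{S_\Theta}\cdot\frac{S-S_\Theta}{S}=\frac{\upsilon_y}{S_\Theta},
\]
where the middle equality uses $\sum_{w\in V\setminus\Theta}\upsilon_w=S-S_\Theta$; this closes the induction and proves the lemma.

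I do not anticipate a genuine difficulty here: the only points to watch are the non-vanishing of the denominators up to time $\tau$ (dealt with above) and the short bookkeeping in the displayed identity. The conceptual reason behind the result, which one could alternatively use as the proof, is that attaching to each site $z$ an independent exponential clock of rate $\upsilon_z$ and listing the sites in order of increasing clock value reproduces \eqref{self_avoiding}--\eqref{self_avoiding2} (by the formula for the minimum of independent exponentials together with the memoryless property), so that $Y$ is simply the site of $\Theta$ whose clock rings first, which is $y$ with probability $\upsilon_y/\sum_{y'\in\Theta}\upsilon_{y'}$; I would nonetheless present the induction above as the written argument, since it requires no density computation.
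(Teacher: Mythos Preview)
Your proof is correct. The paper takes a slightly different but equally short route: it decomposes $\mathbf{P}(Y=y)$ according to the value $t$ of $\tau$ and the self-avoiding path $(z_1,\ldots,z_{t-1})$ in $\mathcal{K}\setminus\Theta$ preceding it, then observes that each term factors as something independent of $y$ times $\upsilon_y$, so that $\mathbf{P}(Y=y)=C\upsilon_y$ for a constant $C$ not depending on $y\in\Theta$; the result then follows by normalisation over $\Theta$. Your induction on the cardinality of $V$ via conditioning on $Z_1$ is arguably cleaner in that it produces the constant explicitly without a separate normalisation step, and it makes the general setting (arbitrary finite ground set) explicit rather than implicit. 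The exponential-clock interpretation you mention as a conceptual aside is itself a complete and elegant third proof.
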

\begin{proof}{lemma}{small_computation}
For all $t\geq 2$, let $\Theta_{t-1}$ be the set of  self-avoiding
paths $(z_1,\ldots,z_{t-1})$ of size $t-1$ on  ${\mathcal
K}\setminus\Theta$.  For $y\in\Theta$, by
\eqref{self_avoiding}--\eqref{self_avoiding2},
\begin{eqnarray*}
{\mathbf P}(Y=y) & = & \sum_{t=1}^{2k}{\mathbf P}(Z_t=y,\tau=t)\\
 &=&  \mathbf{P}(Z_1=y)+\\
 \sum_{t=2}^{2k}\sum_{(z_1,\ldots,z_{t-1})\in \Theta_{t-1}}
&&{\mathbf P}(Z_1=z_1,\ldots,Z_{t-1}=z_{t-1})
\frac{\upsilon_y}{
\sum_{z\in{\mathcal K}\backslash\{z_1,\ldots,z_{t-1}\}}\upsilon_z
}\\
 & = & C\upsilon_y
\end{eqnarray*}
where $C$ is independent of $y\in\Theta$, whence the result.
\end{proof}
\mbox{}\\ \\
\noindent {\bf Acknowledgments:}
K.R. was supported by NSF grant DMS 0104278.
We thank BCM at  TIMC - IMAG,
Universit\'{e}s de Rouen, Paris Descartes and Clermont 2,
and SUNY College at New Paltz, for hospitality.
\end{document}